\def\Xint#1{\mathchoice
   {\XXint\displaystyle\textstyle{#1}}%
   {\XXint\textstyle\scriptstyle{#1}}%
   {\XXint\scriptstyle\scriptscriptstyle{#1}}%
   {\XXint\scriptscriptstyle\scriptscriptstyle{#1}}%
   \!\int}
\def\XXint#1#2#3{{\setbox0=\hbox{$#1{#2#3}{\int}$}
     \vcenter{\hbox{$#2#3$}}\kern-.5\wd0}}
\def\dashint{\Xint-}
\newtheorem{theorem}{Theorem}[section]
\newtheorem{lemma}[theorem]{Lemma}
\newtheorem{proposition}[theorem]{Proposition}
\theoremstyle{remark}
\newtheorem{observation}[theorem]{Remark}
\newtheorem{assumption}{Assumption}
\newtheorem{definition}{Definition}[section]
\newcommand{\les}{\lesssim}
\newcommand{\dd}{\,d}
\newcommand{\R}{\mathbb R}
\newcommand{\C}{\mathbb C}
\newcommand{\Z}{\mathbb Z}
\newcommand{\lb}{\label}
\newcommand{\be}{\begin{equation}}
\newcommand{\ee}{\end{equation}}
\newcommand{\B}{\mathcal B}
\newcommand{\mc}{\mathcal}
\newcommand{\K}{\mc K}
\newcommand{\ov}{\overline}
\DeclareMathOperator{\supp}{supp}
\DeclareMathOperator{\sgn}{sgn}
\DeclareMathOperator{\one}{\mathbbm{1}}
\DeclareMathOperator{\hh}{\mc H}
\renewcommand{\H}{\hh}
\newcommand{\D}{\mc D}
\newcommand{\M}{\mc M}
\DeclareMathOperator{\esssup}{ess sup}
\newenvironment{itm}
{\begin{itemize}[leftmargin=0pt]
  \setlength{\itemsep}{0pt}
  \setlength{\parskip}{0pt}
  \setlength{\parsep}{0pt}}
{\end{itemize}}
\title[Spectral multipliers III]{Spectral multipliers III: Endpoint bounds, intertwining operators, and twisted Hardy spaces}
\author{Marius Beceanu}
\author{Michael Goldberg}
\subjclass{35B50, 35J10, 35L05, 42B15, 42B20, 42B25, 42B30, 42B37}
\begin{document}

\begin{abstract} We extend several fundamental estimates regarding spectral multipliers for the free Laplacian on $\R^3$ to the case of perturbed Hamiltonians of the form $H=-\Delta+V$, where $V$ is a scalar real-valued potential.

Results include sharp bounds for Mihlin multipliers, partial confirmation for a conjecture made in \cite{becgol3} about intertwining operators, a characterization of the twisted Hardy spaces that correspond to these perturbed Hamiltonians, Strichartz estimates, and maximum principles.
\end{abstract}
\maketitle

\tableofcontents
\section{Introduction}

In this paper we establish certain fundamental properties of Schr\"odinger operators $H = -\Delta + V$, with potentials $V$ of finite global Kato norm
\be\lb{katonorm}
\|V\|_\K := \sup_{y \in \R^3} \int_{\R^3} \frac{|V(x)| \dd x}{|x-y|}.
\ee
Results include sharp bounds for Mihlin multipliers, proving some results conjectured in \cite{becgol3} about intertwining operators, upgrading previous Strichartz estimates from \cite{becgol2} and \cite{becgol3}, and maximum principles.

\subsection{Main result}
We assume that $V$ belongs to the closure of $\D=C^\infty_c(\R^3)$ in the global Kato class $\K=\{V \in \M \mid \|V\|_\K < \infty\}$, which we denote $\K_0$. This ensures that $V \in L^1_{loc}$ and that $H=-\Delta+V$ is self-adjoint and bounded from below~\cite{simon}.

By results in \cite{becgol4}, it follows that $\sigma_p(H)$ is finite and consists only of negative eigenvalues, possibly together with $0$, which can be an eigenvalue or a resonance or both. The condition that $V \in L^1_{loc}$ can sometimes be dispensed with, see \cite{becgol4}.
If $V$ is large, non-positive eigenvalues can be present.

Our goal is to extend certain estimates from the free Laplacian to the perturbed case. However, many of them do not hold for the subspace spanned by the bound states, in the perturbed case. The behavior associated to this subspace can be determined easily, since it is finite dimensional and spanned by smooth and rapidly decaying functions. Still, the analogous estimates to the ones in the free case apply only to the (absolutely) continuous part of the spectrum, that is only after we project away the bound states.

Henceforth we shall make the following spectral assumption:
\begin{assumption} $H=-\Delta+V$ has no zero energy eigenstates or resonances.
\end{assumption}
The non-generic (see \cite{becgol4}) cases when threshold eigenstates or resonances are present lead to different estimates for the (absolutely) continuous spectrum, even after projecting these bound states away, and need to be considered separately; see, for example, \cite{erdschlag} or \cite{bec3}.

Propagators for the perturbed wave equation and radially symmetric Mihlin multipliers are both spectral multipliers, being of the form $f(H)$ for some appropriate function $f$. Since $H$ is self-adjoint here, $f(H)$ can be defined in terms of the spectral measure
\be \lb{functionalcalc}
f(H) = \int_{\sigma(H)} f(\lambda) \dd E_H(\lambda)
\ee
as a bounded operator on $L^2$ whenever $f$ is a bounded Borel measurable function on $\sigma(H)$.

The classical H\"{o}rmander--Mihlin theorem establishes that if $m$ satisfies the Mihlin (\ref{cm}) or H\"{o}rmander (\ref{ch}) conditions, then $m(-\Delta)$ is bounded from $L^1$ to $L^{1, \infty}$ and on the Hardy space $\H$, on $L^p$ for $1<p<\infty$, and on $BMO$.

In this paper, we prove a similar result for $m(H)$. We are able to handle the endpoints $L^1$, $\H$, and $L^\infty$ with no extra assumptions on $H$. This is a sharp result, stronger than the one in \cite{becgol4}.

There are several such theorems currently in the literature, each having different assumptions about the operator $H$ or the multiplier function $m$, but none being able to handle the endpoints. For more details, we refer the reader to \cite{becgol3}.

\begin{theorem}[Main Theorem]\lb{mainthm}
Assume $V \in \mc K_0$, and $H = -\Delta + V$ has no eigenvalue or resonance at
zero energy, and no positive eigenvalues.  Let $\phi$ be a standard cutoff function such that $\phi \in C^\infty$, $\supp \phi \subset [\frac12, 4]$, and $\phi(x)=1$ for $x \in [1, 2]$.

Suppose $m: (0,\infty) \to \mathbb C$ satisfies H\"{o}rmander's condition
\begin{equation} \label{ch}
\sup_{k\in \Z} \|\phi(\lambda)m(2^{-k}\lambda)\|_{H^s} = M_s < \infty \text{ for some } s> \frac d2 = \frac32
\end{equation}
or equivalently $\sup_{\alpha>0} \|\phi(\lambda)m(\alpha^{-1} \lambda)\|_{H^s} < \infty$.
Then $m(\sqrt H)=m(\sqrt H) P_c$ is a singular integral operator of weak-(1, 1) type and $L^p$-bounded for $1<p<\infty$. Moreover, $m(\sqrt{H})(x, y) \in L^\infty_y L^{1, \infty}_x$.

Furthermore, if in addition $V \in L^{3/2, \infty}$ and $m$ satisfies Mihlin's stronger condition
\be\lb{cm}
|m^{(k)}(\lambda)| \leq C_2 \lambda^{-k}
\ee
for $0 \leq k \leq \lfloor\frac d 2\rfloor+1 = 2$, then $m(\sqrt H)H(-\Delta)^{-1}$ is bounded from the Hardy space $\H$ to $L^1$.

All norms are bounded by constants that depend linearly on $M_s$ or $C_2$.
\end{theorem}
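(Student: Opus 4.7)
The plan is to reduce the theorem to the classical H\"ormander--Mihlin theorem on $\R^3$ by means of the wave (intertwining) operators $W_\pm$ associated to the pair $(H,-\Delta)$. Under the present spectral hypotheses, $W_\pm$ exist, are complete onto the continuous subspace, and satisfy the intertwining identity $f(H)P_c = W_\pm f(-\Delta) W_\pm^\ast$ for every bounded Borel $f$; specializing to $f(\lambda) = m(\sqrt\lambda)$ yields
$$
m(\sqrt H)\, P_c \;=\; W_\pm \, m(\sqrt{-\Delta}) \, W_\pm^\ast ,
$$
so every endpoint estimate for $m(\sqrt H)P_c$ will be inherited from the corresponding estimate for $m(\sqrt{-\Delta})$, provided the intertwiners respect the relevant function spaces.

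First, I would apply the classical H\"ormander--Mihlin theorem to the radial Fourier multiplier $m(\sqrt{-\Delta})$ on $\R^3$: condition (\ref{ch}) with $s>3/2$ yields weak-$(1,1)$ and $L^p$ boundedness for $1<p<\infty$, boundedness from $\H$ to $L^1$, and the pointwise kernel bound $\sup_y\|k_{\mathrm{free}}(\cdot ,y)\|_{L^{1,\infty}}<\infty$, all with constants linear in $M_s$. Next, I would compose with the endpoint $L^p$, weak-$(1,1)$, and kernel bounds for $W_\pm$ and $W_\pm^\ast$, which have to be established as a separate ingredient (this is exactly the intertwining-operator conjecture from \cite{becgol3} that the paper confirms). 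The weak-$(1,1)$ and $L^p$ boundedness of $m(\sqrt H)P_c$ and the $L^\infty_y L^{1,\infty}_x$ kernel bound then follow by composition, using a pointwise-in-$y$ control of the kernel of $W_\pm^\ast$ for the latter. The $\H\to L^1$ statement requires more care, because $W_\pm$ need not map the standard Hardy space into itself; this is precisely why the theorem inserts the factor $H(-\Delta)^{-1}$. The operator $H(-\Delta)^{-1} = I + V(-\Delta)^{-1}$ identifies the classical $\H$ with the twisted Hardy space naturally adapted to $H$, and one shows $m(\sqrt H)$ maps the twisted space into $L^1$ via the factorization above, from which the stated Hardy-to-$L^1$ bound for $m(\sqrt H)H(-\Delta)^{-1}$ follows.

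The principal difficulty is not the multiplier theory itself but the endpoint mapping properties of $W_\pm$ and the correct formulation of the twisted Hardy space. Proving that $W_\pm$ is of weak type $(1,1)$ with kernel in $L^\infty_y L^{1,\infty}_x$, and that its twisted variant sends $\H$ to $L^1$, calls for a careful kernel analysis via the Born-series expansion of $W_\pm$, resolvent estimates for $H$ in the global Kato class, and the absence of zero-energy and positive eigenvalues or resonances. Once those endpoint intertwining bounds are in place, Theorem \ref{mainthm} follows by assembling the classical H\"ormander--Mihlin result with the factorization and with the twist by $H(-\Delta)^{-1}$ on the Hardy-space side.
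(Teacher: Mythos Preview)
Your plan has a genuine circularity problem and a misidentification of what the paper actually proves.

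First, the ``intertwining-operator conjecture'' that this paper confirms (Theorem~\ref{opt}) concerns the operators $H^s P_c(-\Delta)^{-s}$ and $(-\Delta)^s H^{-s}P_c$, \emph{not} the wave operators $W_\pm$. These are different objects: $W_\pm$ are the strong limits $\lim_{t\to\pm\infty} e^{itH}e^{it\Delta}$, whereas $H^s(-\Delta)^{-s}$ are obtained by functional calculus. The paper never establishes endpoint ($L^1$, weak-$(1,1)$, or Hardy) bounds for $W_\pm$ under the sole hypothesis $V\in\K_0$, and to my knowledge such bounds are not available in the literature at this generality. Your proposal therefore outsources the entire difficulty to an ingredient that is not supplied.

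Second, and more seriously, the intertwining bounds the paper \emph{does} prove (Theorem~\ref{opt}) are derived \emph{from} Theorem~\ref{mainthm}: the proof of Theorem~\ref{opt} invokes Theorem~\ref{mainthm} to get $H^{1+i\sigma}(-\Delta)^{-1}\in\B(\H,L^1)$ and $H^{i\sigma}\in\B(L^p)$, then interpolates. So even if you were willing to use $H^s(-\Delta)^{-s}$ in place of $W_\pm$, you would be arguing in a circle.

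The paper's actual proof is a direct kernel analysis with no reduction to the free case. Writing $T=m(\sqrt H)$ via Stone's formula as an integral against the sine propagator $S_t$, one shows that $\tilde T = T H(-\Delta)^{-1}$ (not $T$ itself) satisfies the Calder\'on--Zygmund cancellation condition~(\ref{cancel}), while only $T$ is $L^2$-bounded. This mismatch is the crux: Theorem~\ref{czthm} does not apply directly to either operator. The resolution is a Calder\'on--Zygmund decomposition of $f$ with respect to the \emph{weighted} measure $w\,d\mu$, where $w=P_c(I+(-\Delta)^{-1}V)^{-1}1$; the bad pieces $b_n$ then have zero $w$-average, so that $\tilde b_n=(-\Delta)H^{-1}P_c b_n$ have zero Lebesgue average, and the cancellation of $\tilde T$ can be exploited on $Tb=\tilde T\tilde b$. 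Showing $w\ge c>0$ (Proposition~\ref{ess}) requires a strong maximum principle proved from scratch. None of this machinery is visible in your outline.
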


\begin{observation}
The conclusions of Theorem~\ref{mainthm} hold equally well for $m(H)$, because $m(\lambda)$ satisfies the H\"{o}rmander condition (\ref{ch}) if and only if $m(\lambda^2)$ does.
\end{observation}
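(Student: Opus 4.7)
The plan is to deduce the bounds for $m(H)$ from Theorem \ref{mainthm} itself by applying that theorem to the auxiliary function $\tilde m(\lambda):=m(\lambda^2)$. By the Borel functional calculus for the self-adjoint operator $H$, one has $\tilde m(\sqrt H) = m(H)$, and hence every mapping property asserted for $\tilde m(\sqrt H)$ in Theorem \ref{mainthm} transfers verbatim to $m(H)$, with constants linear in the H\"ormander norm of $\tilde m$. It therefore suffices to prove the equivalence $\|\tilde m\|_H \sim \|m\|_H$ of H\"ormander norms, where only the direction $\|\tilde m\|_H \les \|m\|_H$ is needed for the stated application.

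Using the equivalent supremum-over-$\alpha$ form of \eqref{ch}, write
\[
\phi(\lambda)\tilde m(\alpha^{-1}\lambda) = \phi(\lambda)\,m\bigl(\alpha^{-2}\lambda^2\bigr),
\]
and set $\beta := \alpha^2$. On $\supp\phi \subset [\tfrac12,4]$, the map $\lambda\mapsto\mu=\lambda^2$ is a smooth diffeomorphism onto $[\tfrac14,16]$ with derivatives uniformly bounded above and bounded away from zero; composition with such a diffeomorphism is an isomorphism of $H^s$ restricted to a compact subset of $(0,\infty)$, with constants depending only on $\phi$ and $s$. After the change of variable $\mu = \lambda^2$ this yields
\[
\bigl\|\phi(\lambda)m(\beta^{-1}\lambda^2)\bigr\|_{H^s_\lambda} \les \bigl\|\phi(\sqrt\mu)\,m(\beta^{-1}\mu)\bigr\|_{H^s_\mu}.
\]
Since $\phi(\sqrt\mu)$ is smooth and compactly supported in $[\tfrac14,16]$, I can decompose it as a finite sum $\phi(\sqrt\mu)=\sum_{k\in K}\chi_k(\mu)\,\phi(2^{-k}\mu)$ with $K\subset\Z$ fixed and finite and with each $\chi_k$ a smooth bump of bounded $H^s$-multiplier norm. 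Each resulting summand is controlled by a constant times $M_s$ via \eqref{ch} for $m$, which gives the desired uniform bound. The reverse implication $\|m\|_H \les \|\tilde m\|_H$ follows by the same chain of estimates applied to the smooth diffeomorphism $\mu\mapsto\sqrt\mu$ on $[\tfrac14,16]$.

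The only nontrivial ingredient, and the main technical point, is the invariance of $H^s$ for compactly supported functions under smooth diffeomorphisms of $(0,\infty)$; this is classical and can be justified by Faà di Bruno together with the chain rule for fractional derivatives, or by Besov-space localization. Everything else (rewriting $m(H)$ as $\tilde m(\sqrt H)$ by functional calculus, and the finite dyadic partition of $\phi(\sqrt{\,\cdot\,})$) is bookkeeping.
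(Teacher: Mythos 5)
Your proposal is correct and follows exactly the route the paper intends: the remark is justified in the paper only by the one-line assertion that the H\"ormander condition is invariant under the substitution $\lambda\mapsto\lambda^2$, and your argument (apply Theorem~\ref{mainthm} to $\tilde m(\lambda)=m(\lambda^2)$ so that $\tilde m(\sqrt H)=m(H)$, then transfer the $H^s$ bound on $\supp\phi$ through the smooth diffeomorphism $\mu=\lambda^2$ together with a finite dyadic repartition) is a complete and accurate filling-in of that assertion. No gaps.
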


The $L^p$ boundedness for $1<p<\infty$ admits a simplified proof, see \cite{becgol3}, based on decomposing the kernel into the free part and the difference, assuming the classical result that the free part $m(-\Delta)$ is bounded and also that $V \in L^{3/2, \infty}$. Under the stronger assumption that $s>2$, \cite{becgol3} proved a pointwise $|x-y|^{-3}$ kernel bound. Under a further assumption on $V$, the same paper proved the weak-type (1, 1) boundedness of spectral Mihlin multipliers.

\subsection{The twisted Hardy space}
In Theorem \ref{mainthm} we saw that, due to its cancellation properties, $m(H) H(-\Delta)^{-1}$ is bounded from the Hardy space $\H$ to $L^1$. By duality, $(-\Delta)^{-1} H m(H)$ is also bounded from $L^\infty$ to $BMO$.

Naturally, we want such a result for $m(H)$ itself. For this purpose, we introduce the twisted Hardy space, corresponding to the perturbed Hamiltonian $H=-\Delta+V$ in the same manner in which the Hardy space $\H$ corresponds to the free Hamiltonian $-\Delta$.

The simplest definition would be taking the twisted Hardy space to be
$$
HP_c(-\Delta)^{-1} \H = P_c(I+V(-\Delta)^{-1})\H.
$$
However, this has the disadvantage that Mihlin multipliers $m(H)$, such as $H^{i\sigma}$, are not necessarily bounded from this space to itself, so we shall use a more complicated, but more general definition:

\begin{definition}\lb{dhardy} The twisted Hardy space is
$$
\tilde \H = \{f \in L^1 \mid f=\int_\R H^{1+i\sigma}P_c(-\Delta)^{-1} f_\sigma \dd \sigma,\ \int_\R \|f_\sigma\|_{\H} \langle \sigma\rangle^2 \dd \sigma < \infty\},
$$
and with the $\tilde \H$ norm taken as the infimum over such decompositions.

Let $\tilde {BMO}$ be the dual of $\tilde \H$.
\end{definition}
Note that the space spanned by the bound states of $H$, if any, is not included. Thus, $\tilde \H$ may have positive, but finite, codimension.

Clearly, $\tilde \H$ is still a Banach space. We next list some of the properties that make it analogous to the Hardy space $\H$. A part in the statement of subsequent Proposition \ref{twist} is played by the weight function
$$
w=P_c(I+(-\Delta)^{-1}V)^{-1} 1 \in L^\infty.
$$

In the free case $w=1$. If $H \geq 0$, by Proposition \ref{ess} $w$ is bounded from below away from $0$, while in general, by Lemma \ref{posit}, $w$ can be replaced by $w_a=w+af_N$, where $f_N$ is the ground state.

The weight $w$ has properties similar to those of the constant function $1$ in the free case. For example, $Hw=0$, implying that
$$
S_tw=\frac {\sin(t \sqrt H)P_c}{\sqrt H} w = t w.
$$
Consequently, $\tilde P_{\leq n} w=w$ and $\tilde P_n w = 0$ for all twisted Littlewood--Paley projections $\tilde P_n$, which are defined in Definition \ref{pw}.

\begin{proposition}\lb{twist}Assume $V \in \mc K_0$, and $H = -\Delta + V$ has no eigenvalue or resonance at zero energy, and no positive eigenvalues. Then $\tilde \H \subset P_c L^1$  and functions in $\tilde \H$ are orthogonal to $w$. The operators $H^{i\sigma}$ and all Mihlin multipliers $m(H)$ for $m \in C^\infty_c((0, \infty))$, including the Paley--Wiener projections $\tilde P_n$, are bounded from $\tilde \H$ to itself.
\end{proposition}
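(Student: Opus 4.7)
\smallskip\noindent\textbf{Proof proposal.}\quad My plan handles the three assertions in sequence, relying in each case on Theorem~\ref{mainthm} together with the Mellin decomposition $m(H) = \int \hat n(\tau) H^{i\tau}\dd\tau$.

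For the embedding $\tilde\H \subset L^1$, I apply Theorem~\ref{mainthm} to the multiplier $m_\sigma(\lambda) = \lambda^{2i\sigma}$. Successive derivatives $\partial_\lambda^k\lambda^{2i\sigma} = O(\langle\sigma\rangle^k \lambda^{2i\sigma-k})$ give H\"ormander norm $\les \langle\sigma\rangle^s$, and because $m_\sigma(\sqrt H)\cdot H(-\Delta)^{-1} = H^{1+i\sigma} P_c(-\Delta)^{-1}$, the theorem furnishes
\be
\|H^{1+i\sigma} P_c(-\Delta)^{-1} f_\sigma\|_{L^1} \les \langle\sigma\rangle^s \|f_\sigma\|_{\H}.
\ee
Integrating in $\sigma$ against the weight of Definition~\ref{dhardy} and taking the infimum over decompositions gives $\|f\|_{L^1} \les \|f\|_{\tilde\H}$.

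For orthogonality to $w$, the pairing $\langle f,w\rangle$ is absolutely convergent since $\tilde\H\subset L^1$ and $w\in L^\infty$, so it suffices to verify vanishing term-by-term. Formally dualizing yields
\be
\langle H^{1+i\sigma} P_c(-\Delta)^{-1} f_\sigma,\, w\rangle = \langle f_\sigma,\, (-\Delta)^{-1} P_c H^{1-i\sigma} w\rangle,
\ee
and $H^{1-i\sigma}w = H^{-i\sigma}(Hw)=0$ by functional-calculus commutativity together with $Hw=0$ stated just before the proposition. The non-$L^2$ character of $w$ is the main technical point: I will justify the identity by truncating $w$ via the Littlewood--Paley identity $\tilde P_{\le n} w = w$, running the spectral-calculus identities for $\tilde P_{\le n} w \in L^2$, and passing to the limit; equivalently, one tests against compactly supported smooth functions and transfers operators by self-adjointness, using $Hw=0$ as a distributional identity.

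For the multiplier bounds, $H^{i\tau}$ maps a decomposition $(f_\sigma)$ of $f$ to the shifted decomposition $(f_{\sigma-\tau})$ of $H^{i\tau}f$, and the elementary inequality $\langle\sigma\rangle^s \les \langle\tau\rangle^s\langle\sigma-\tau\rangle^s$ yields $\|H^{i\tau}f\|_{\tilde\H} \les \langle\tau\rangle^s\|f\|_{\tilde\H}$. For $m\in C^\infty_c((0,\infty))$, set $n(\xi):= m(e^\xi)\in C^\infty_c(\R)$; Fourier inversion gives $m(\lambda)=\frac{1}{2\pi}\int \hat n(\tau)\lambda^{i\tau}\dd\tau$, whence $m(H)=\frac{1}{2\pi}\int_\R \hat n(\tau) H^{i\tau}\dd\tau$ and
$$
\|m(H) f\|_{\tilde\H} \les \int_\R |\hat n(\tau)|\langle\tau\rangle^s\dd\tau \cdot \|f\|_{\tilde\H},
$$
finite because $\hat n$ is Schwartz. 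The twisted Paley--Wiener projections $\tilde P_n$ are dyadic dilates of a single $\phi\in C^\infty_c$ in the variable $\log\lambda$, so rescaling leaves $|\hat n(\tau)|$ invariant and the constant is uniform in $n$. The principal obstacle I foresee is the rigorous treatment of $H^{1-i\sigma}w = 0$ in Step 2, given that $w \notin L^2$; everything else reduces directly to Theorem~\ref{mainthm} combined with the Mellin calculus.
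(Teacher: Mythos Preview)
Your proposal is correct and follows essentially the same approach as the paper: Theorem~\ref{mainthm} for the embedding, $Hw=0$ for orthogonality, and the Mellin representation $m(H)=\int g(\sigma)H^{i\sigma}\dd\sigma$ with $H^{i\sigma}$ acting by translation on the decomposition $(f_\sigma)$ for the multiplier bounds. One simplification worth noting for Step~2: rather than moving all of $H^{1-i\sigma}$ onto $w$, the paper moves only a single factor of $H$, writing $\langle H^{1+i\sigma}P_c(-\Delta)^{-1}f_\sigma,\,w\rangle = \langle H^{i\sigma}P_c(-\Delta)^{-1}f_\sigma,\,Hw\rangle = 0$, which sidesteps the need to interpret $H^{-i\sigma}w$ and thus resolves the technical concern you flag.
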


Since $\tilde \H \subset L^1$, by duality $L^\infty \subset \tilde {BMO}$. Elements of $\tilde {BMO}$ are in fact equivalence classes, up to adding arbitrary multiples of $w$ and bound states.

Much more general Mihlin multipliers $m(H)$ are still bounded from $\tilde \H$ to $L^1$:
\begin{theorem}\lb{hor} Assume $V \in \mc K_0 \cap L^{3/2, \infty}$, and $H = -\Delta + V$ has no eigenvalue or resonance at zero energy, and no positive eigenvalues. Suppose $m:(0, \infty) \to \C$ satisfies Mihlin's condition (\ref{cm}) and take $s=2$ in Definition \ref{dhardy}. Then $m(\sqrt H)$ is bounded from the twisted Hardy space $\tilde {\H}$ to $L^1$.
\end{theorem}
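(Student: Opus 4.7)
The plan is to reduce Theorem \ref{hor} to Theorem \ref{mainthm} by unwinding the decomposition defining $\tilde \H$. Given $f \in \tilde \H$, I would fix a near-optimal representation
\begin{equation*}
f = \int_\R H^{1+i\sigma} P_c (-\Delta)^{-1} f_\sigma \dd \sigma, \qquad \int_\R \|f_\sigma\|_{\H} \langle \sigma \rangle^s \dd \sigma \leq 2 \|f\|_{\tilde \H},
\end{equation*}
and then move $m(\sqrt H)$ inside the integral. Because $m(\sqrt H)$, $H^{1+i\sigma}$, and $P_c$ are mutually commuting functions of $H$, the integrand factors as
\begin{equation*}
m(\sqrt H) H^{1+i\sigma} P_c (-\Delta)^{-1} = \tilde m_\sigma(\sqrt H) \cdot H(-\Delta)^{-1}, \qquad \tilde m_\sigma(\lambda) := m(\lambda) \lambda^{2i\sigma},
\end{equation*}
so that the right-hand side is exactly the combination to which Theorem \ref{mainthm} applies, giving boundedness from $\H$ to $L^1$ with operator norm controlled by the H\"ormander constant $M_s(\tilde m_\sigma)$.

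The one substantive point is the polynomial-in-$\sigma$ estimate $M_s(\tilde m_\sigma) \lesssim \langle \sigma \rangle^s M_s$. I would establish it by writing $(2^{-k}\lambda)^{2i\sigma} = 2^{-2ik\sigma} \lambda^{2i\sigma}$, which has modulus $1$, and invoking a multiplicative estimate in $H^s$ against a fixed smooth cutoff whose support slightly enlarges that of $\phi$. Each derivative of $\lambda^{2i\sigma}$ on such a compact set brings down a factor $2i\sigma - j$ with $j \le s$, so the truncated function has $H^s$ norm bounded by $\langle \sigma \rangle^s$, and the product with $\phi(\lambda) m(2^{-k}\lambda)$ inherits at most this factor uniformly in $k$. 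This polynomial growth is precisely what forces the weight $\langle \sigma \rangle^s$ in Definition \ref{dhardy}.

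Combining these ingredients and integrating in $\sigma$ then yields
\begin{equation*}
\|m(\sqrt H) f\|_{L^1} \leq \int_\R \|\tilde m_\sigma(\sqrt H) H(-\Delta)^{-1} f_\sigma\|_{L^1} \dd \sigma \lesssim M_s \int_\R \langle \sigma \rangle^s \|f_\sigma\|_{\H} \dd \sigma \lesssim M_s \|f\|_{\tilde \H},
\end{equation*}
as required. The anticipated main obstacle lies entirely in verifying the polynomial growth bound for $M_s(\tilde m_\sigma)$; once it is in hand, the rest is functional-calculus bookkeeping together with a direct appeal to Theorem \ref{mainthm}.
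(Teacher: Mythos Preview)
Your proposal is correct and follows essentially the same route as the paper's own proof: both reduce to Theorem~\ref{mainthm} by observing that $m(\sqrt H)H^{1+i\sigma}P_c(-\Delta)^{-1}$ equals $\tilde m_\sigma(\sqrt H)\,H(-\Delta)^{-1}$ with $\tilde m_\sigma(\lambda)=m(\lambda)\lambda^{2i\sigma}$, checking that the H\"ormander constant of $\tilde m_\sigma$ grows like $\langle\sigma\rangle^s$, and then absorbing this growth into the weight built into Definition~\ref{dhardy}. Your treatment of the polynomial growth via the algebra property of $H^s$ against a slightly enlarged cutoff is exactly the standard way to make the paper's one-line claim ``$m(\lambda)\lambda^{i\sigma}$ satisfies~\eqref{ch} with norm of size $\langle\sigma\rangle^s$'' rigorous.
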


Interpolation involving $\tilde \H$ is harder, requiring the development of the full theory of skewed Hardy spaces and skewed $BMO$. We leave this for a future paper. Still, we present one particular useful case.

\begin{lemma}\lb{inter} Assume $V \in \mc K_0 \cap L^{3/2, \infty}$, and $H = -\Delta + V$ has no eigenvalue or resonance at zero energy, and no positive eigenvalues. For $\frac 32 \leq p \leq 2$, $\theta=\frac {2p}{p-1}$, $[\tilde \H, L^2]_\theta=L^p$.
\end{lemma}
By reiteration, all the ``twisted'' interpolation spaces in this range are just the usual Lebesgue and Lorentz spaces.


The following theorem is the partial confirmation of a conjecture made in \cite{becgol3}:
\begin{theorem}\lb{opt} Assume $V \in \K_0$ and let $H=-\Delta+V$ have no eigenvalue or resonance at zero energy, and no positive eigenvalues. Then
\begin{enumerate}
\item $H^sP_c (-\Delta)^{-s} \in \B(L^p)$ and $(-\Delta)^s H^{-s}P_c \in \B(L^p)$ for $\frac 1 {1-s} < p < \frac 1 s$, $0<s<\frac 1 2$.
\item $(-\Delta)^{-s} H^s P_c \in \B(L^p)$ and $P_c H^{-s} (-\Delta)^s \in \B(L^p)$ for $\frac 1 {1-s} < p < \frac{1}{s}$, $0<s<\frac 1 2$.
\end{enumerate}
\end{theorem}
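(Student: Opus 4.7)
The plan is Stein's complex interpolation applied to two analytic families of operators, with the remaining sub-claims obtained by $L^p$ duality and a resolvent identity.

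\emph{First family.} Set $T_z := H^z P_c (-\Delta)^{-z}$, analytic in the strip $0 \le \Re z \le 1$. On the line $\Re z = 0$, the symbol $\lambda \mapsto \lambda^{i\sigma}$ satisfies the H\"{o}rmander condition \eqref{ch} with $M_s \lesssim \langle \sigma \rangle^s$, so Theorem \ref{mainthm} gives $T_{i\sigma} \in \B(L^{p_0})$ for every $p_0 \in (1,\infty)$, with polynomial norm growth in $\sigma$. On the line $\Re z = 1$, the factorization
\[
T_{1+i\sigma} = H^{i\sigma}\bigl[HP_c(-\Delta)^{-1}\bigr](-\Delta)^{-i\sigma} = H^{i\sigma}\,P_c\bigl(I + V(-\Delta)^{-1}\bigr)(-\Delta)^{-i\sigma}
\]
reduces $L^1$-boundedness to the Kato estimate $\|V(-\Delta)^{-1}f\|_{L^1} \le (4\pi)^{-1}\|V\|_\K \|f\|_{L^1}$, which is just Fubini. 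Stein interpolation then yields $T_s \in \B(L^p)$ whenever $1/p = (1-s)/p_0 + s$, and letting $p_0$ range over $(1,\infty)$ produces every $p \in (1, 1/s)$. This is claim (1) first assertion.

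\emph{Second family.} Set $V_z := P_c H^{-z}(-\Delta)^z$. The $\Re z = 0$ endpoint is handled identically. On $\Re z = 1$ the identity $P_c H^{-1}(-\Delta) = P_c - P_c H^{-1} V$, combined with the pointwise bound $|H^{-1}P_c(x,y)| \lesssim |x-y|^{-1}$ that holds under the zero-energy hypothesis (cf.\ \cite{becgol4}), gives $\|P_c H^{-1} V f\|_{L^\infty} \lesssim \|V\|_\K \|f\|_{L^\infty}$ by the same Kato-norm computation as above. Stein interpolation then produces $V_s \in \B(L^p)$ for every $p \in (1/(1-s), \infty)$ and every $s \in (0,1)$; restricting to $s \in (0,1/2)$ recovers claim (2) second assertion. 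Taking Hilbert-space adjoints---permissible since $H, -\Delta$ are self-adjoint and $P_c$ commutes with $H^{-s}$---furnishes $(-\Delta)^s H^{-s} P_c \in \B(L^p)$ for $p \in (1, 1/s) \supset (1/(1-s), 1/s)$, which is claim (1) second assertion.

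\emph{Claim (2) first assertion.} Adjointing the first family yields $(-\Delta)^{-s} H^s P_c \in \B(L^p)$ for $p \in (1/(1-s), \infty)$, which covers the full stated range $(1/s, \infty)$ whenever $s \le 1/2$. The remaining regime $s > 1/2$, $p \in (1/s, 1/(1-s)]$ is the principal obstacle. I would treat it via the algebraic identity
\[
(-\Delta)^{-s} H^s P_c = (-\Delta)^{1-s} H^{-(1-s)} P_c + (-\Delta)^{-s} V H^{s-1} P_c
\]
coming from $H = -\Delta + V$ and commutativity of $H^s$ with $P_c$. The first summand equals $V_{1-s}^*$ at parameter $1-s \in (0, 1/2)$, and by duality from the second family it is bounded on $(1, 1/(1-s)) \supset (1/s, 1/(1-s))$. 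The second summand carries the genuinely new analytic content: bounding $(-\Delta)^{-s} V H^{s-1} P_c$ on $L^p$ for $p \in (1/s, 1/(1-s))$ requires a fractional off-diagonal kernel bound for $H^{-(1-s)}P_c$ of the same flavour as the one used above for $H^{-1}P_c$, combined with the Hardy--Littlewood--Sobolev mapping properties of $(-\Delta)^{-s}$ and the global Kato norm of $V$. This is the step I expect to dominate the argument, and it is where the spectral hypothesis at zero enters substantively beyond merely enabling the first resolvent bound.
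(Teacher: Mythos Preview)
Your Stein-interpolation scheme is the right architecture, but both endpoint claims at $\Re z = 1$ fail as written. In the first family you assert $T_{1+i\sigma}\in\B(L^1)$ via the factorization $H^{i\sigma}\,[HP_c(-\Delta)^{-1}]\,(-\Delta)^{-i\sigma}$; however, neither $H^{i\sigma}$ nor $(-\Delta)^{-i\sigma}$ is bounded on $L^1$ --- Mihlin multipliers are only of weak $(1,1)$ type there. The paper repairs this by working at the Hardy-space endpoint: $(-\Delta)^{-i\sigma}\in\B(\H)$ is classical, and the specific conclusion of Theorem~\ref{mainthm} that $m(\sqrt H)H(-\Delta)^{-1}:\H\to L^1$ (applied with $m(\lambda)=\lambda^{2i\sigma}$) gives $H^{1+i\sigma}(-\Delta)^{-1-i\sigma}\in\B(\H,L^1)$. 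Complex interpolation between this and $T_{i\sigma}\in\B(L^{p_0})$ then yields the stated range $1<p<1/s$, since $[\H,L^{p_0}]_\theta=L^p$.

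The same obstruction is fatal to your second family: $V_{1+i\sigma}=P_cH^{-1-i\sigma}(-\Delta)^{1+i\sigma}$ is not in $\B(L^\infty)$ for the identical reason, and there is no direct Hardy-space substitute here because $(-\Delta)H^{-1}P_c$ does not preserve $\H$ (it would map the twisted space $\tilde\H$ to $\H$, but the interpolation theory of $\tilde\H$ is only partially developed --- see Lemma~\ref{inter}). This is precisely why the paper abandons the $\Re z=1$ endpoint for the reversed operator and instead interpolates between $\Re z=0$ (all $L^p$) and $\Re z=\tfrac12$, where Lemma~\ref{hilbert} supplies $(-\Delta)^{1/2+i\sigma}H^{-1/2-i\sigma}\in\B(L^2)$; this yields only the half-range $\tfrac{1}{1-s}<p<\tfrac{1}{s}$, $0<s<\tfrac12$. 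Your argument, if it worked, would produce the full conjectured range $1<p<1/s$ for $(-\Delta)^sH^{-s}P_c$ --- a result the paper explicitly flags as open. Finally, your separate treatment of claim~(2) first assertion for $s>1/2$ is moot: the paper obtains that statement (with the range that duality actually delivers) simply by taking adjoints of claim~(1), and your proposed algebraic splitting is both unnecessary and left incomplete.
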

Various results also hold in borderline cases. For $(-\Delta)^s H^{-s}P_c$ and its adjoint we obtained only half the conjectured range of $L^p$ bounds. This is due to using $\tilde \H$ in the proof, whose full interpolation properties are not yet known, instead on $\H$. 

The case $s=\frac12$, $p=2$ is proved in~\cite{goldberg}. See Lemma \ref{hilbert} for easy reference. Assuming only that $V \in \K_0$, the optimal range is conjectured in \cite{becgol3} to be $1<p<\frac 1 s$, with some bounds also holding in the endpoint cases. Theorem \ref{opt} is valid for a restricted $L^p$ range.

For comparison, also see the following result in \cite{hong}:
\begin{lemma}[{\cite[Lemma 1.4]{hong}}] \lb{intertwine} Suppose $V \in\K_0 \cap L^{3/2,\infty}(\R^3)$ and $H = -\Delta+ V$ has no eigenvalue or resonance at zero energy, and no positive eigenvalues. For every $0 \leq s \leq 1$, 
\begin{enumerate}
\item Operators  $P_c H^s (-\Delta)^{-s},\;(-\Delta)^s H^{-s} P_c$ belong to $\B(L^p)$ over the range $1<p<\frac 3 {2s}$.
\item Operators $(-\Delta)^{-s} H^s P_c,\;P_c H^{-s} (-\Delta)^s$ belong to $\B(L^p)$ in the range $\frac 3 {3-2s}<p<\infty$.
\end{enumerate}

Moreover, $P_c H (-\Delta)^{-1},\;\Delta H^{-1} P_c \in \B(L^1)$ and $(-\Delta)^{-1} H P_c,\;P_c H^{-1} \Delta \in \B(L^\infty)$.
\end{lemma}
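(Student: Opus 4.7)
The plan is to verify the endpoint cases $s=0$ and $s=1$ directly, and then bootstrap to $s \in (0,1)$ via Stein complex interpolation on the analytic family $T_z = P_c H^z (-\Delta)^{-z}$.

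At $s = 0$ all four operators reduce to $P_c$, which is $L^p$-bounded for every $1 \le p \le \infty$ because the finitely many bound states of $H$ are smooth and exponentially decaying (Agmon estimates, using the absence of positive eigenvalues and Assumption~A). At $s = 1$, the identity $H(-\Delta)^{-1} = I + V(-\Delta)^{-1}$ gives $P_c H(-\Delta)^{-1} = P_c + P_c V(-\Delta)^{-1}$; the integral kernel of $V(-\Delta)^{-1}$ is $V(x)/(4\pi|x-y|)$, whose $L^1_x$-norm is uniformly bounded in $y$ by $\|V\|_\K$, so Schur's test yields $L^1$-boundedness, and $(-\Delta)^{-1} H P_c \in \B(L^\infty)$ then follows by duality. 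For $\Delta H^{-1} P_c$ I would factor $H^{-1} P_c = (-\Delta)^{-1}[I + V(-\Delta)^{-1}]^{-1} P_c$ and invoke Fredholm theory: $V(-\Delta)^{-1}$ is compact on suitable subspaces of $L^1$ (here $V \in L^{3/2,\infty} \cap \K_0$ enters), and the generalized kernel of $I + V(-\Delta)^{-1}$ corresponds precisely to the bound states killed by $P_c$, so the inverse exists and is bounded on $P_c L^1$.

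For interior $0 < s < 1$, consider the analytic family $T_z = P_c H^z (-\Delta)^{-z}$ on the strip $0 \le \Re z \le 1$. On $\Re z = 0$, applying Theorem~\ref{mainthm} to the Mihlin multiplier $\lambda \mapsto \lambda^{iz/2}$ (together with the classical Mihlin theorem for $(-\Delta)^{-it}$) gives $L^{p_0}$-boundedness for every $p_0 \in (1,\infty)$ with polynomial growth in $|\im z|$. On $\Re z = 1$, Hardy--Littlewood--Sobolev combined with Hölder in Lorentz spaces against $V \in L^{3/2,\infty}$ upgrades the $L^1$ endpoint bound to $L^{p_1}$-boundedness for every $p_1 \in [1, 3/2)$. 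Stein's theorem then yields $T_s \in \B(L^{p_s})$ with $1/p_s = (1-s)/p_0 + s/p_1$; sweeping $p_0$ over $(1,\infty)$ and $p_1$ over $[1, 3/2)$ traces out exactly the claimed open range $1 < p_s < 3/(2s)$. The symmetric analytic family $(-\Delta)^z H^{-z} P_c$ handles the other operator in (1), and the operators in (2) are the Banach-space adjoints of those in (1) and hence follow by duality.

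The main obstacle is securing the polynomial-in-$|\im z|$ control of $\|H^{iz}\|_{L^{p_0} \to L^{p_0}}$ needed at $\Re z = 0$ in order for Stein's theorem to close; this is precisely what the full Mihlin multiplier theorem for $H$ (Theorem~\ref{mainthm}) supplies, and is the reason the $L^p$ problem is strictly harder than the $L^2$ functional-calculus bound. A secondary technical point is verifying that $I + V(-\Delta)^{-1}$ is Fredholm on $L^1$ with the correct kernel and cokernel, which leverages $V \in L^{3/2,\infty} \cap \K_0$ together with the absence of zero-energy resonances.
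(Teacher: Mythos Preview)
This lemma is not proved in the paper at all: it is quoted verbatim from \cite{hong} and included solely ``for comparison'' with the paper's own Theorem~\ref{opt}. There is therefore no proof in the paper to compare your proposal against.

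That said, your outline is essentially the standard argument (and is what Hong does): endpoint bounds at $s=0$ and $s=1$ plus Stein complex interpolation. A few small corrections. On the line $\Re z=1$ you should take $p_1\in(1,3/2)$ rather than $[1,3/2)$, since both $H^{it}$ and $(-\Delta)^{-it}$ are only weak-type at $p=1$; this is harmless because you are sweeping over an open range anyway. Your appeal to Theorem~\ref{mainthm} for the $L^{p_0}$ bound on $H^{it}$ is logically fine within this paper, but note that Hong's original argument predates Theorem~\ref{mainthm} and instead uses his own multiplier theorem, which already needs the extra hypothesis $V\in L^{3/2,\infty}$; indeed, the point of Theorem~\ref{opt} in the present paper is precisely to remove that hypothesis (at the cost of a smaller $p$-range). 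Finally, the Fredholm step for $(I+V(-\Delta)^{-1})^{-1}$ on $L^1$ is more delicate than you indicate: $V(-\Delta)^{-1}$ is not obviously compact on $L^1$, and one typically argues instead by approximation of $V$ in $\K_0$ by bounded compactly supported potentials, or by transferring invertibility from a space where compactness is clear.
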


\subsection{Paley--Wiener decompositions and square functions}
While in \cite{becgol3} we based our argument on complex interpolation, in this paper we also employ real methods, namely Paley--Wiener projections and the square function.

Fix a smooth cutoff function $\phi \in C^\infty_c(\R)$ such that
$$
\sum_{k \in \Z} \phi(2^{-k} t) = \chi_{(0, \infty)}(t)
$$
and $\supp \phi \subset [\frac 1 2, 4]$.

\begin{definition}\lb{pw} The twisted Paley--Wiener ``projections'' $\tilde P_n$, corresponding to the perturbed Hamiltonian $H$, are
$$
\tilde P_n=\phi(2^{-n} \sqrt H),\ \tilde P_{\leq n} = \sum_{n' \leq n} \tilde P_{n'}, \tilde P_{\geq n} = \sum_{n' \geq n} \tilde P_{n'}.
$$
\end{definition}

The operators $\tilde P_{\leq n}$ and $\tilde P_{\geq n}$ can also be defined directly using appropriate cutoff functions.

\begin{definition}
The Littlewood-Paley square function for the perturbed Hamiltonian $H$ is
$$
[S_H f](x) = \bigg(\sum_{n \in \Z} |\tilde P_n f(x)|^2\bigg)^{1/2}.
$$
\end{definition}

The Paley--Wiener projections are not singular integral operators. In the free case, they are given by convolutions with smooth, rapidly decaying functions. Understanding the twisted square function $S_H$ and Paley--Wiener projections $\tilde P_n$ leads to another proof of fractional integration bounds, among other consequences.

Square functions are comparable to the original function in the $L^p$ norms, $1<p<\infty$, as shown in \cite{becgol3}, while different results hold at the endpoints. Here we get rid of the $V \in L^{3/2, \infty}$ condition and also include the endpoint cases.

\begin{proposition}\lb{sqr} Assume $V \in \mc K_0$ and $H=-\Delta+V$ has no eigenvalue or resonance at zero energy, and no positive eigenvalues. Then for each $p \in (1, \infty)$
$$
\|S_H f\|_{L^p} \les_p \|f\|_{L^p} \les_p \|S_H f\|_{L^p}.
$$
Moreover, if $V \in \K_0 \cap L^{3/2, \infty}$, then $\|S_H f\|_{L^1} \les \|f\|_{\tilde \H}$ and $\|S_H f\|_{\tilde {BMO}} \les \|f\|_{L^\infty}$.
\end{proposition}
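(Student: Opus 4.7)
The entire proposition reduces by pointwise Khintchine's inequality to understanding the randomized multiplier $m_\epsilon(\lambda) = \sum_n \epsilon_n \phi(2^{-n}\lambda)$, where $\epsilon = (\epsilon_n)_{n \in \Z}$ are i.i.d.\ Rademacher signs. Since $\epsilon_n \phi$ is just $\phi$ up to a sign, $m_\epsilon$ satisfies H\"ormander's condition (\ref{ch}) uniformly in $\epsilon$, so Theorem \ref{mainthm} and Theorem \ref{hor} both apply to $m_\epsilon(\sqrt H)$ with constants independent of $\epsilon$. The pointwise identity $S_H f(x) \sim \E_\epsilon |m_\epsilon(\sqrt H) f(x)|$ combined with Fubini then transfers norms between the two sides.

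For $1<p<\infty$, Theorem \ref{mainthm} applied to $m_\epsilon$ gives
\[
\|S_H f\|_{L^p}^p \sim \E_\epsilon \|m_\epsilon(\sqrt H) f\|_{L^p}^p \les \|f\|_{L^p}^p,
\]
establishing the upper bound. The reverse inequality (where $f$ should be read as $P_c f$, since each $\tilde P_n$ annihilates the bound states) comes from polarization. Pick $\psi \in C^\infty_c((0,\infty))$ with $\psi \equiv 1$ on $\supp \phi$, set $\tilde P_n' = \psi(2^{-n}\sqrt H)$, and define $S_H' g = (\sum_n |\tilde P_n' g|^2)^{1/2}$. Then $\tilde P_n'\tilde P_n = \tilde P_n$ and $\sum_n \tilde P_n' \tilde P_n = P_c$ strongly on $L^2$, which via pointwise Cauchy--Schwarz in $\ell^2$ gives
\[
|\langle P_c f, g\rangle| = \Bigl|\sum_n \langle \tilde P_n f, \tilde P_n' g\rangle\Bigr| \le \int S_H f(x)\, S_H' g(x)\dd x \le \|S_H f\|_{L^p}\|S_H' g\|_{L^{p'}}.
\]
The same randomization argument applied to $\tilde P_n'$ bounds $\|S_H' g\|_{L^{p'}} \les \|g\|_{L^{p'}}$, and supping over $\|g\|_{L^{p'}}\le 1$ closes the estimate.

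The $L^1$ endpoint is then immediate: Theorem \ref{hor} bounds $\|m_\epsilon(\sqrt H) f\|_{L^1} \les \|f\|_{\tilde \H}$ uniformly in $\epsilon$, hence
\[
\|S_H f\|_{L^1} \sim \int \E_\epsilon |m_\epsilon(\sqrt H) f(x)| \dd x = \E_\epsilon \|m_\epsilon(\sqrt H) f\|_{L^1} \les \|f\|_{\tilde \H}.
\]
For the $\tilde{BMO}$ endpoint I would pass to the vector-valued setting: define $Tf = (\tilde P_n f)_n$ with formal adjoint $T^*(g_n) = \sum_n \tilde P_n g_n$. The plan is first to establish an $\ell^2$-valued analog of Theorem \ref{hor} showing $T^*: \tilde \H(\ell^2) \to L^1$, whose predual statement is $T: L^\infty \to \tilde{BMO}(\ell^2)$. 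The scalar bound $\|S_H f\|_{\tilde{BMO}} \les \|f\|_{L^\infty}$ then descends from this by inserting the pointwise inequality $\bigl|\|v\|_{\ell^2} - \|w\|_{\ell^2}\bigr| \le \|v-w\|_{\ell^2}$ into the oscillation functional defining $\tilde{BMO}$.

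The main obstacle is this last $L^\infty \to \tilde{BMO}$ step, because $\tilde{BMO}$ is defined only abstractly as the dual of $\tilde \H$ modulo multiples of $w$, so the duality with an $\ell^2$-valued Hardy space and the subsequent descent to a scalar statement must be tracked carefully through the equivalence classes. The $\ell^2$-valued extension of Theorem \ref{hor} itself should go through by applying the scalar proof coordinatewise together with the atomic decomposition of $\tilde \H$, so the real difficulty lies in interpreting and carrying the twisted $BMO$ structure through these manipulations rather than in any new estimate.
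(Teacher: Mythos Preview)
For $1<p<\infty$ and for the $\tilde\H\to L^1$ endpoint your argument is exactly what the paper indicates: random Rademacher multipliers $m_\epsilon$, pointwise Khintchine, Theorem~\ref{mainthm} (respectively Theorem~\ref{hor}) applied to $m_\epsilon(\sqrt H)$, and polarization against a companion square function for the reverse $L^p$ inequality. The paper gives no more detail than this, referring to \cite{becgol3} and naming precisely these ingredients.

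The $L^\infty\to\tilde{BMO}$ endpoint is a genuine gap, and you have correctly located it. Your proposed descent from an $\ell^2$-valued bound $T:L^\infty\to\tilde{BMO}(\ell^2)$ to the scalar bound $\|S_Hf\|_{\tilde{BMO}}\les\|f\|_{L^\infty}$ via the pointwise Lipschitz inequality $\bigl|\,\|v\|_{\ell^2}-\|w\|_{\ell^2}\bigr|\le\|v-w\|_{\ell^2}$ requires a mean-oscillation description of $\tilde{BMO}$, and the paper supplies none: $\tilde{BMO}$ is defined purely as the abstract dual of $\tilde\H$, modulo multiples of $w$. With only that definition there is no mechanism to pass from control of the vector $(\tilde P_nf)_n$ in $(\tilde\H(\ell^2))^*$ to control of its pointwise $\ell^2$-norm in $(\tilde\H)^*$, nor even an intrinsic candidate for $\tilde{BMO}(\ell^2)$ beyond that abstract dual. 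The paper does not resolve this either; it defers to \cite{becgol3} and explicitly postpones ``the full theory of skewed Hardy spaces and skewed $BMO$'' to future work, so the $\tilde{BMO}$ assertion in Proposition~\ref{sqr} is effectively a forward reference rather than something derivable from the tools developed in the present paper.
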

See \cite{becgol3} for a proof, involving random multipliers, Kintchine's inequality, and Theorem \ref{mainthm}. Half the cases are treated by duality.

The twisted dyadic Paley--Wiener projections $\tilde P_{\leq n}$, $\tilde P_n$, and $\tilde P_{\geq n}$ have non-singular integral kernels and are bounded on each Lebesgue space $L^p$, $1 \leq p \leq \infty$, with norm uniformly bounded in $p$ and $n$, as shown by the following lemma:

\begin{lemma}\lb{aux} Assume $V \in \mc K_0$ and $H=-\Delta+V$ has no eigenvalue or resonance at zero energy, and no positive eigenvalues. Consider a multiplier $m:[0, \infty)\to \C$ such that $m \in C^\infty_c([0, \infty))$, such that $m^{(n)}(0)=0$ for all odd $n$. Then $m(\sqrt H)=m(\sqrt H) P_c$ is uniformly bounded on $L^p$ for $1 \leq p \leq \infty$, with the operator norm controlled by $\|t \partial_t \widehat {\tilde m}(t)\|_{L^1}$, where $\tilde m$ is the even extension of $m$.

Moreover, the integral kernel of $m(\sqrt H)$ has arbitrary decay: $|m(\sqrt H)(x, y)| \les_n |x-y|^{-n}$.
\end{lemma}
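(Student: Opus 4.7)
The plan is to represent $m(\sqrt H)P_c$ as an integral of perturbed wave propagators against $\widehat{\tilde m}$, and then derive both the uniform $L^p$ bound and the pointwise kernel decay from a wave-propagator estimate together with finite propagation speed.

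First, since $m \in C^\infty_c([0,\infty))$ with $m^{(n)}(0) = 0$ for every odd $n$, the even extension $\tilde m$ lies in $C^\infty_c(\R)$; hence $\widehat{\tilde m}$ is even, real-valued, and Schwartz (in fact entire of exponential type by Paley--Wiener). Fourier inversion gives $\tilde m(\lambda) = \int_\R \widehat{\tilde m}(t)\cos(t\lambda)\,dt$, and the functional calculus applied to the continuous part of $\sqrt H$ yields
\[
m(\sqrt H) P_c = \int_\R \widehat{\tilde m}(t)\,\cos(t\sqrt H) P_c\,dt.
\]
Using $\partial_t S_t = \cos(t\sqrt H) P_c$, integration by parts in $t$ (the boundary terms vanish because $\widehat{\tilde m}$ is Schwartz while $S_t$ grows at most linearly) gives
\[
m(\sqrt H) P_c = -\int_\R \partial_t\widehat{\tilde m}(t)\,S_t\,dt.
\]

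For the operator bound I then invoke the uniform estimate $\|S_t\|_{L^p \to L^p} \lesssim |t|$ for $1 \leq p \leq \infty$. In the free case this is immediate from the three-dimensional sphere-average representation of $S_t^0$; for the perturbed case it follows from the Duhamel expansion $S_t = S_t^0 + \int_0^t S_{t-s}^0 V S_s\,ds$ combined with the Kato-class wave-equation estimates developed in \cite{becgol2,becgol4}. Granting this,
\[
\|m(\sqrt H) P_c\|_{L^p \to L^p} \leq \int_\R |\partial_t\widehat{\tilde m}(t)|\,\|S_t\|_{L^p \to L^p}\,dt \lesssim \|t\,\partial_t\widehat{\tilde m}(t)\|_{L^1},
\]
uniformly in $p$, which is the advertised bound.

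For the kernel decay the crucial input is finite propagation speed: the integral kernel of $S_t$ (and of $\cos(t\sqrt H) P_c$) is supported in the cone $\{|x-y|\leq |t|\}$. Consequently, for fixed $(x,y)$ only $|t|\geq |x-y|$ contributes to $m(\sqrt H)(x,y) = -\int \partial_t\widehat{\tilde m}(t)\,S_t(x,y)\,dt$, and iterated integration by parts transfers additional $t$-derivatives onto $\widehat{\tilde m}$. Because every derivative of $\widehat{\tilde m}$ remains Schwartz and the averaged kernel mass of $S_t$ on fixed-radius balls grows at worst polynomially in $|t|$, combining these with the restriction $|t|\geq |x-y|$ produces $|m(\sqrt H)(x,y)| \lesssim_n |x-y|^{-n}$ for any $n$. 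The main obstacle is the uniform $L^p$ control $\|S_t\|_{L^p \to L^p} \lesssim |t|$ in the perturbed setting, which is a genuinely non-trivial input relying on the Kato-class wave estimates. The kernel-decay step also requires handling the distributional singularity of $S_t(x,y)$ at the light cone $|x-y| = |t|$, most cleanly by working with averaged kernels before passing to pointwise bounds.
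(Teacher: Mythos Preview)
Your operator-norm argument is essentially the paper's: the representation $m(\sqrt H)P_c = c\int \partial_t\widehat{\tilde m}(t)\,S_t\,dt$ together with the kernel bound $\int_{\R^3}|S_t(x,y)|\,dx \lesssim t$ (this is Lemma~\ref{newversion}, which is exactly what your Duhamel sketch produces) gives the $L^1\to L^1$ bound, then duality and interpolation finish. That part is fine.

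The kernel-decay argument has a genuine gap. You assert that $S_t(x,y)$ is supported in $\{|x-y|\le |t|\}$, but in this paper $S_t$ is by definition $\chi_{t\ge 0}\frac{\sin(t\sqrt H)}{\sqrt H}P_c$, \emph{including the projection} $P_c$. The full wave propagator has finite speed of propagation, but once you subtract the bound-state piece the resulting $S_t$ does not: formula~(\ref{outside}) gives explicitly
\[
\one_{>t}(|x-y|)\,S_t(x,y) = -\one_{>t}(|x-y|)\sum_{j=1}^J \frac{\sinh(t\mu_j)}{\mu_j}\,f_j(x)\overline{f_j}(y),
\]
which is nonzero whenever $H$ has negative eigenvalues (and the hypotheses of the lemma allow this --- only the zero-energy threshold is excluded). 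So the restriction to $|t|\ge |x-y|$ is unjustified, and the region $0\le t<|x-y|$ must be handled separately via the exponential decay of the eigenfunctions, as the paper does.

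Even on the region $t\gtrsim |x-y|$, your ``iterated integration by parts'' plus ``averaged kernel mass grows polynomially'' is too vague to yield a \emph{pointwise} bound. The paper does not iterate integration by parts here at all; instead it uses the fundamental pointwise-in-$(x,y)$ estimate $\int_0^\infty |S_t(x,y)|\,dt \lesssim |x-y|^{-1}$ from~(\ref{funda}) together with the Schwartz decay $|\partial_t\widehat{\tilde m}(t)| \lesssim_N t^{-N}$ for $t\ge |x-y|$. That combination gives $|x-y|^{-N}$ directly. Your antiderivatives-of-$S_t$ route would require pointwise control of $C_t(x,y)$, $S_t^1(x,y)$, etc., which is available in the paper but which you neither state nor justify.
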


More generally, one has that
\begin{proposition}\lb{paleywiener} Assume $V \in \mc K_0$ and $H=-\Delta+V$ has no eigenvalue or resonance at zero energy, and no positive eigenvalues. Then the twisted Paley--Wiener projections $\tilde P_{\leq n}$ and $\tilde P_n$ are bounded from $L^p$ to $L^q$ for any $1\leq p \leq q \leq \infty$ and
$$
\|\tilde P_{\leq n}\|_{\B(L^p, L^q)} \les 2^{2n(\frac 1 p -\frac 1 q)},\ \|e^{i\tau \sqrt H} \tilde P_{\leq n}\|_{\B(L^p, L^q)} \les 2^{2n(\frac 1 p -\frac 1 q)},
$$
and same for $\tilde P_n$.
\end{proposition}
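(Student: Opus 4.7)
The plan is the standard Bernstein strategy for Paley--Wiener projections: combine the uniform $L^p \to L^p$ bound furnished by Lemma \ref{aux} (via scale-invariance of $\|t\partial_t \widehat{\tilde m}(t)\|_{L^1}$) with an $L^1 \to L^2$ estimate at the correct scale, and then interpolate. For the wave propagator $e^{i\tau\sqrt H}$, I would use its unitarity on $L^2$ to transfer the $L^1 \to L^2$ bound across without change in the exponent.

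The key auxiliary observation is that in three dimensions the Kato norm (\ref{katonorm}) is scale-invariant: the $L^2$-isometric dilation $D_\lambda f(x) = \lambda^{3/2}f(\lambda x)$ gives $D_\lambda^{-1} H D_\lambda = \lambda^2(-\Delta + V_\lambda)$ with $V_\lambda(x) = \lambda^{-2}V(x/\lambda)$ and $\|V_\lambda\|_\K = \|V\|_\K$. Consequently any estimate proved for $\phi(\sqrt H)$ at unit scale transfers uniformly to $\phi(2^{-n}\sqrt H)$ at every dyadic scale, with constants depending only on $\|V\|_\K$.

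To prove $\|\tilde P_n\|_{L^1 \to L^2} \lesssim 2^{3n/2}$ (and the analogous bound for $\tilde P_{\leq n}$), I would write
$$
\|\tilde P_n(\cdot,y)\|_{L^2}^2 = \bigl(\tilde P_n^2\bigr)(y,y) = \frac{2}{\pi}\int_0^\infty \phi^2(2^{-n}\lambda)\,\lambda\,\im R(\lambda^2 + i0)(y,y)\dd\lambda
$$
via Stone's formula for $\sqrt H$, and bound the diagonal of the limiting resolvent by expanding $R = R_0 - R_0 V R$ against the free diagonal density $\im R_0(\lambda^2+i0)(y,y) = \lambda/(4\pi)$; the Kato bound on $V$ absorbs the perturbation uniformly in $\lambda$ and $y$. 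Dualizing produces the matching $L^2 \to L^\infty$ bound, and factoring $\tilde P_n = \tilde P_n^{(1)}\tilde P_n^{(2)}$ as a product of two smooth bumps of $\sqrt H$ at the same dyadic scale (and $\tilde P_{\leq n} = \tilde P_{\leq n+2}\tilde P_{\leq n}$ for the low-frequency variant) gives the $TT^*$-style $L^1 \to L^\infty$ bound. Combining this with the uniform $L^p \to L^p$ bound from Lemma \ref{aux}, Riesz--Thorin interpolation among the points $(1,1)$, $(\infty,\infty)$, $(1,2)$, $(2,\infty)$, $(1,\infty)$ delivers the full range $1 \leq p \leq q \leq \infty$ with the stated Bernstein exponent.

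For the wave propagator, commutativity with $\tilde P_{\leq n}$ and unitarity of $e^{i\tau\sqrt H}$ on the continuous spectrum yield
$$
\|e^{i\tau\sqrt H}\tilde P_{\leq n}\|_{L^1 \to L^2} = \sup_y\|e^{i\tau\sqrt H}\tilde P_{\leq n}\delta_y\|_{L^2} = \sup_y\|\tilde P_{\leq n}\delta_y\|_{L^2} = \|\tilde P_{\leq n}\|_{L^1 \to L^2},
$$
and the matching $L^2 \to L^\infty$ bound follows by duality; the $TT^*$ factorization $e^{i\tau\sqrt H}\tilde P_{\leq n} = (e^{i\tau\sqrt H}\tilde P_{\leq n}^{(1)})\tilde P_{\leq n}^{(2)}$ then yields the $L^1 \to L^\infty$ endpoint, and interpolation finishes the argument. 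The main obstacle is the diagonal density-of-states bound controlling $\phi^2(2^{-n}\sqrt H)(y,y)$: elementary in the free case, the perturbed version relies decisively on $V \in \K_0$ and the three-dimensional scale-invariance of $\|\cdot\|_\K$ to absorb the $R_0 V R$ correction uniformly in $n$ and $y$.
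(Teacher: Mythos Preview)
Your approach differs from the paper's. The paper obtains the $L^1\to L^\infty$ endpoint by bounding the kernel pointwise: starting from $m(\sqrt H)P_c = (2\pi^2)^{-1}\int_0^\infty \partial_t\widehat{\tilde m}(t)\,S_t\,dt$, it splits at $t\sim|x-y|$ and on the large-$t$ piece integrates by parts twice to reach $S^1_t := \sin(t\sqrt H)P_c/H^{3/2}$, whose kernel is uniformly bounded in $x,y,t$ (a consequence of the $L^1\to L^\infty$ sine estimate from \cite{becgol2}). The resulting Fourier-side norm is translation-invariant in $t$, and this is precisely what makes the $e^{i\tau\sqrt H}$ bound $\tau$-uniform with no further argument. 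Your route through $L^1\to L^2$ via $(\tilde P_n^2)(y,y)$ is a legitimate alternative, and the scaling reduction is sound in principle (the relevant quantities---$\|V\|_\K$ and the norms $\|(I+VR_0^\pm(\lambda^2))^{-1}\|_{\B(L^1)}$---are scale-invariant in $\R^3$), though your phrase ``depending only on $\|V\|_\K$'' is not quite right: the zero-resonance hypothesis enters through the second of these.

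The step that needs repair is the diagonal resolvent bound. Expanding $R_V^+ = R_0^+ - R_0^+VR_V^+$ and evaluating at $(y,y)$ produces $\int R_0^+(y,z)V(z)R_V^+(z,y)\,dz$, whose absolute value is governed by $\int|V(z)|\,|y-z|^{-2}\,dz$, which is \emph{not} controlled by the Kato norm. The correct mechanism is the factorization
\[
\im R_V^+(\lambda^2) \;=\; (I+R_0^+(\lambda^2)V)^{-1}\bigl(\im R_0^+(\lambda^2)\bigr)(I+VR_0^-(\lambda^2))^{-1},
\]
which sandwiches the bounded kernel $\im R_0^+(\lambda^2)(x,y)=\frac{\sin(\lambda|x-y|)}{4\pi|x-y|}$ (an $L^1\to L^\infty$ map of norm $\lambda/4\pi$) between an $L^\infty$-bounded and an $L^1$-bounded operator, both uniform in $\lambda$ under the stated hypotheses; this yields $\sup_y\im R_V^+(\lambda^2)(y,y)\lesssim\lambda$ and hence your $L^1\to L^2$ bound. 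For the propagator, your unitarity argument together with interpolation covers only $1\le p\le 2\le q\le\infty$; the paper's translation-invariance argument likewise establishes only the $L^1\to L^\infty$ endpoint uniformly in $\tau$, and in fact a $\tau$-uniform $L^1\to L^1$ bound for $e^{i\tau\sqrt H}\tilde P_n$ already fails for $V=0$ (the convolution kernel has $L^1$-norm $\sim 2^n|\tau|$ for $|\tau|\gg 2^{-n}$), so the propagator claim should be read on that restricted range.
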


\subsection{Strichartz estimates for the wave equation}
The wave equation with potential, corresponding to the Hamiltonian $H=-\Delta+V$, is
\be\lb{wave}
\partial^2_t u + Hu = F(t,x).
\ee

In this paper we use real methods, such as Paley--Wiener projections and the square function, to prove Strichartz estimates for (\ref{wave}), following \cite{sog} instead of \cite{beals}.

Strichartz estimates have the form
$$
\|u\|_{L^\infty_t \dot H^s_x \cap L^p_t L^q_x} + \|u_t\|_{L^\infty_t \dot H^{s-1}_x} \les \|u_0\|_{\dot H^s} + \|u_1\|_{\dot H^{s-1}} + \|F\|_{L^{\tilde p'}_t L^{\tilde q'}_x},
$$
and hold for appropriate values of $s$, $p$, $q$, $\tilde p$, and $\tilde q$. See (\ref{strichartz_est}) and the ensuing discussion.

Due to the low regularity of the potential $V \in \K_0$, the homogeneous Sobolev spaces $\dot H^s$ and their twisted analogues
$$
\dot {\tilde H}^s = \{f \in \mc S' \mid H^{s/2} f \in L^2\}
$$
only coincide for $|s| \leq 1$, see Lemma \ref{hilbert}, so we confine our analysis to this range.

When $s>1$, Strichartz estimates still hold, but the spaces and operators involve a mixture of the free Laplacian and $H$.

\begin{theorem} Assume that $V \in \K_0$ and $H=-\Delta+V$ has no zero energy eigenstate or resonance, and no positive eigenvalues. Then Strichartz estimates (\ref{strichartz_est}) hold for $P_c f$, the continuous spectrum projection of the solution, in the range $0 \leq s \leq 1$, for all appropriate $p$, $q$, $\tilde p$, and $\tilde q$ satisfying conditions (\ref{condition1}) and (\ref{condition2}).
\end{theorem}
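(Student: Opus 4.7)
The plan is to follow the Sogge strategy for proving Strichartz estimates: first establish the bounds for a single dyadic frequency band, and then sum in $n$ using square functions.

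I first reduce to frequency-localized estimates using the twisted Paley--Wiener projections $\tilde P_n$ from Definition \ref{pw}. Writing $\cos(t\sqrt H) = \frac12(e^{it\sqrt H} + e^{-it\sqrt H})$ and likewise for $\sin(t\sqrt H)/\sqrt H$, I express $P_c u$ as a sum over $n \in \Z$ of half-wave propagators acting on $\tilde P_n P_c$ data. By Proposition \ref{paleywiener}, the operator $e^{i\tau\sqrt H}\tilde P_n$ maps $L^p \to L^q$ with norm $\les 2^{2n(1/p-1/q)}$, which is exactly the kind of Bernstein-type bound that plays the role of the dispersive estimate when combined with the standard $L^2$-energy estimate. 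Interpolating between the trivial $L^2$-conservation $\|e^{i\tau\sqrt H}\tilde P_n f\|_{L^2} \les \|f\|_{L^2}$ and the $L^1 \to L^\infty$ decay (once I have it, see below) yields the full frequency-localized dispersive bound.

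To complete the dispersive estimate for $e^{it\sqrt H}\tilde P_n$, I would invoke the kernel bounds on $\tilde P_n$ supplied by Lemma \ref{aux}, combined with the fact that $\tilde P_n$ is supported on frequencies $\sqrt{H} \sim 2^n$. A finite-speed-of-propagation plus stationary-phase style argument (as in Sogge, transported to the perturbed setting via the $m(\sqrt H)$ calculus of Theorem \ref{mainthm}) then gives a $(1+2^n|t|)^{-1}$-type decay rate, which is enough for the $TT^*$ argument to yield frequency-localized Strichartz estimates with the usual $(p,q)$ admissibility condition (\ref{condition1}) and the scaling condition (\ref{condition2}). The inhomogeneous term $F$ is handled by the standard Duhamel representation together with the Christ--Kiselev lemma, which is applicable away from the double endpoint.

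The final step is to combine the frequency-localized bounds into a global Strichartz estimate. For the space-time norm, I apply Minkowski's inequality (using $p,q \geq 2$) to pull $\ell^2_n$ inside $L^p_tL^q_x$, then invoke the square function comparability in Proposition \ref{sqr} to replace $\|(\sum_n |\tilde P_n u|^2)^{1/2}\|_{L^p_tL^q_x}$ by $\|u\|_{L^p_tL^q_x}$ for $1 < q < \infty$, and similarly on the data side. To move freely between $\|H^{s/2}\cdot\|_{L^2}$ and $\|(-\Delta)^{s/2}\cdot\|_{L^2}$, I use the intertwining bounds at $p=2$ of Theorem \ref{opt} (equivalently Lemma \ref{hilbert}), which is precisely why the range is restricted to $0 \leq s \leq 1$. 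The main obstacle, I expect, will be the lowest-frequency piece, where the scaling estimates from Proposition \ref{paleywiener} degenerate and one must exploit additional cancellation from the orthogonality of $P_c f$ to the weight $w$; in the range $0 \leq s \leq 1$ this is controlled by the fact that the sum $\sum_n \tilde P_n P_c$ converges in the relevant mixed Lebesgue norms by the square function bounds, but the double-endpoint case (if included in conditions (\ref{condition1})--(\ref{condition2})) would likely require the Keel--Tao bilinear argument adapted to the twisted setting, using Lemma \ref{inter} as the interpolation input.
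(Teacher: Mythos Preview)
Your overall architecture---frequency-localize via $\tilde P_n$, prove a dyadic dispersive/Strichartz bound, then resum using the square-function comparability of Proposition~\ref{sqr}---matches the paper's route (this is exactly the content of Lemma~\ref{lplemma}, after which the paper declares the proof ``standard'' and omits it). The use of Lemma~\ref{hilbert} to pass between $\dot H^s$ and the twisted $\dot{\tilde H}^s$ norms, restricting $s$ to $[0,1]$, is also correct.

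The genuine gap is your source for the time-decay. Proposition~\ref{paleywiener} gives $\|e^{i\tau\sqrt H}\tilde P_n\|_{\B(L^1,L^\infty)}\les 2^{2n}$ \emph{uniformly in $\tau$}---no decay in $\tau$ whatsoever---and Lemma~\ref{aux} gives only spatial kernel decay of $m(\sqrt H)$ for fixed $m$. Neither of these, nor Theorem~\ref{mainthm} (which concerns $L^p$-boundedness of Mihlin multipliers, not propagator decay), can substitute for a dispersive estimate. Your proposed ``finite-speed-of-propagation plus stationary-phase style argument transported to the perturbed setting'' is not available here: there is no explicit oscillatory-integral representation of $e^{it\sqrt H}$ on which to run stationary phase. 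In the paper the dispersive input is instead the pointwise kernel bound~(\ref{1stcosbd}),
\[
\sup_{x,y}\Big|\frac{\cos(t\sqrt H)P_c}{H}(x,y)\Big|\les |t|^{-1},
\]
which in turn descends from the sine-propagator bound~(\ref{fund}) of \cite{becgol2}. From this one gets $\|\cos(t\sqrt H)H^{-1-i\sigma}\tilde P_n f\|_{L^\infty}\les |t|^{-1}\langle\sigma\rangle^{3/2+}\|f\|_{L^1}$ (using Lemma~\ref{aux} only to move $H^{-i\sigma}\tilde P_n$ harmlessly through $L^1$), and then complex interpolation with the $L^2$ energy bound gives the frequency-localized $L^p\to L^{p'}$ decay uniformly in $n$. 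The Sogge diagonal summation then goes through exactly as you describe.

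A secondary point: your anticipated ``main obstacle'' at low frequency, requiring orthogonality to $w$, is a red herring. Because the $t^{-1}$ bound on $C_t$ is global (not frequency-localized), the interpolated bound $\|\cos(t\sqrt H)H^{-s}\tilde P_n f\|_{L^{p'}}\les |t|^{-s}\|f\|_{L^p}$ is uniform in $n$, and the $\ell^2_n$ sum via the square function requires no special treatment of small $n$.
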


The proof is standard (and we omit it) once we have the following $L^p \to L^{p'}$ decay estimates:
\begin{lemma}\lb{lplemma} Assume $V \in \mc K_0$ and $H=-\Delta+V$ has no eigenvalue or resonance at zero. For $1 \leq p \leq 2$ and $s=\frac 2 p - 1$,
$$
\|\cos(t \sqrt H) H^{-s} P_c f\|_{L^{p'}} \les |t|^{-s} \|f\|_{L^p}.
$$
\end{lemma}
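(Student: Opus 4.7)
The approach is to establish the two endpoints of the claimed estimate and then recover the intermediate range by complex interpolation. At $p=2$ (so $s=0$) the bound is immediate because $\cos(t\sqrt H)P_c$ is an $L^2$-contraction uniformly in $t$ by the spectral theorem. The intermediate cases would follow from Stein's analytic interpolation applied to the family $T_z=\cos(t\sqrt H)H^{-z}P_c$ on $\mathrm{Re}\,z\in[0,1]$; the required polynomial growth on vertical lines is furnished by the Hörmander-type bounds on $H^{iy}$ coming from Theorem \ref{mainthm}, applied to the multiplier $\lambda^{iy}$ whose Hörmander norm grows only polynomially in $\langle y\rangle$.

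The substantive endpoint is $p=1$, $s=1$, namely $\|\cos(t\sqrt H)H^{-1}P_c f\|_{L^\infty}\lesssim |t|^{-1}\|f\|_{L^1}$. In the free case Kirchhoff's formula identifies the integral kernel explicitly,
$$\cos(t\sqrt{-\Delta})(-\Delta)^{-1}(x,y)=\frac{\chi_{|x-y|>|t|}}{4\pi|x-y|},$$
which is pointwise bounded by $(4\pi|t|)^{-1}$ and establishes the claim for $V=0$. To transfer to the perturbed operator I would factor $H^{-1}P_c=(-\Delta)^{-1}B$, where $B=(I+V(-\Delta)^{-1})^{-1}P_c$ is the Faddeev-type operator underlying the definition of the weight $w$; under the assumption $V\in\mathcal K_0$ together with the absence of zero-energy obstructions, $B$ is bounded on $L^1$, so the composition $\cos(t\sqrt{-\Delta})H^{-1}P_c$ already satisfies the claim. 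The remaining discrepancy $(\cos(t\sqrt H)-\cos(t\sqrt{-\Delta}))H^{-1}P_c$ would be treated via the Duhamel identity
$$\cos(t\sqrt H)g=\cos(t\sqrt{-\Delta})g-\int_0^t\frac{\sin((t-s)\sqrt{-\Delta})}{\sqrt{-\Delta}}V\cos(s\sqrt H)g\,ds,$$
with $g=H^{-1}P_c f$, iterated into a Born series in $V$ and bounded using Kirchhoff's spherical-shell representation of the free propagator together with the global Kato norm $\|V\|_{\mathcal K}$ -- which is exactly the quantity that controls one spatial convolution against $V$.

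The principal obstacle is ensuring that the iterated Duhamel expansion converges uniformly in $t$: each iterate costs a factor of $\|V\|_{\mathcal K}$ from the spherical convolution against $V$, but the spacetime integration could in principle contribute a growing time factor. The cleanest route is to resum the Neumann series using a resolvent identity of the form $(I+\mathcal D_V)^{-1}$ for the Duhamel perturbation $\mathcal D_V$, whose invertibility on the relevant function space is guaranteed precisely by the spectral assumption that $H$ has no zero-energy eigenstate or resonance. This converts the potentially divergent Born expansion into a single bounded operator acting on the free dispersive estimate, so the $|t|^{-1}$ decay of the leading term is preserved.
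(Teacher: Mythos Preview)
Your handling of the two endpoints themselves is fine: the $p=2$ case is trivial, and the $p=1$, $s=1$ bound $\|\cos(t\sqrt H)H^{-1}P_c\|_{L^1\to L^\infty}\lesssim |t|^{-1}$ is exactly the pointwise kernel estimate (\ref{1stcosbd}) in Lemma~\ref{lemma21}, whose proof indeed goes through the factorization $(-\Delta)H^{-1}P_c\in\B(L^1)$ and a Duhamel-type representation. So that part of your plan is sound and matches the paper.

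The gap is in the interpolation step. Stein interpolation with the family $T_z=\cos(t\sqrt H)H^{-z}P_c$ requires, on the line $\Re z=1$, an $L^1\to L^\infty$ bound for $\cos(t\sqrt H)H^{-1-iy}P_c$ with at most exponential growth in $y$. You propose to obtain this by composing the $z=1$ endpoint with $H^{-iy}$ and invoking Theorem~\ref{mainthm}. But Theorem~\ref{mainthm} gives only weak $(1,1)$ and $L^p$ boundedness for $1<p<\infty$; the imaginary powers $H^{iy}$ are \emph{not} bounded on $L^1$ or on $L^\infty$ (they are genuine singular integral operators). Hence there is no way to absorb the factor $H^{-iy}$ on either side of the $L^1\to L^\infty$ estimate, and the endpoint bound on the vertical line $\Re z=1$ is simply unavailable. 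Nor does the kernel argument for (\ref{1stcosbd}) extend in any obvious way to $C_tH^{iy}$, since the proof rests on the fundamental bound (\ref{fund}) for the unmodified sine kernel.

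This is precisely the obstacle the paper's proof is designed to overcome. Instead of interpolating globally, one first localizes with the twisted Littlewood--Paley projections $\tilde P_n$: by Lemma~\ref{aux}, $H^{i\sigma}\tilde P_n$ \emph{is} bounded on $L^1$ and $L^\infty$, uniformly in $n$ and with polynomial growth in $\sigma$, because the compactly supported cutoff kills the singular-integral behavior. Complex interpolation then yields $\|\cos(t\sqrt H)H^{-s}\tilde P_n f\|_{L^{p'}}\lesssim |t|^{-s}\|f\|_{L^p}$ uniformly in $n$. The pieces are reassembled by Sogge's almost-orthogonality argument: pair against $g$, reduce to near-diagonal terms $\langle\cdot\,\tilde P_n f,\tilde P_{n+k}g\rangle$, apply Cauchy--Schwarz in $n$, use Minkowski (valid since $p\leq 2$) to pass to the square function $S_H$, and invoke Proposition~\ref{sqr}. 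Without this frequency localization, the endpoint failure of $H^{iy}$ on $L^1$ blocks the direct route.
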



Other than Strichartz estimates, another class of estimates for the wave equation is named after Peral \cite{peral}, who proved for a general wave equation ($H$ is an elliptic operator with second-order terms only, with smooth coefficients constant outside a compact spacetime set) that
$$
\bigg\|\frac {\sin(t\sqrt H)}{\sqrt H} f\bigg\|_{\dot W^{s, p}} \leq C_{p, t} \|f\|_{L^p},
$$
for $\big|\frac 1 p - \frac 1 2\big| \leq \frac 1 {d-1}$, $|s| \leq 1-(d-1)\big|\frac 1 p - \frac 1 2\big|$.

In our context, this result also holds and does not require using the Hardy space or $BMO$ at the endpoints:
\begin{theorem} Assume $V \in \mc K_0$ and $H=-\Delta+V$ has no eigenvalue or resonance at zero. For $1 \leq p \leq \infty$ and $|s| \leq 1-2\big|\frac 1 p - \frac 1 2\big|$,
$$
\bigg\|\frac {\sin(t\sqrt H)}{\sqrt H} f\bigg\|_{\dot W^{s, p}} \les |t|^{1-s} \|f\|_{L^p}.
$$
\end{theorem}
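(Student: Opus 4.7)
The plan is to reduce the estimate to frequency-localized operator-norm bounds at each twisted Paley--Wiener scale, and then transfer from the $H$-based Sobolev norm to the free norm $\dot W^{s,p}$ via Theorem~\ref{opt}. The finite-dimensional eigenspace part of $f$ is smooth and rapidly decaying, so we focus on the continuous-spectrum piece $P_c f = \sum_n \tilde P_n f$, and treat the main range $0 \le s \le 1$.

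The key dyadic bound, valid uniformly in $p \in [1,\infty]$, is
\begin{equation*}
\Big\|H^{s/2}\,\frac{\sin(t\sqrt H)}{\sqrt H}\,\tilde P_n\Big\|_{\B(L^p)} \les \min\bigl(|t|\,2^{sn},\; 2^{-(1-s)n}\bigr).
\end{equation*}
The high-frequency factor $2^{-(1-s)n}$ follows by writing $H^{(s-1)/2}\tilde P_n = 2^{(s-1)n}\psi(2^{-n}\sqrt H)$ with the fixed function $\psi(\mu)=\mu^{s-1}\phi(\mu) \in C^\infty_c([1/2,4])$ and applying Lemma~\ref{aux}, together with the uniform $L^p$-boundedness of $\sin(t\sqrt H)\tilde P_n$ furnished by Proposition~\ref{paleywiener}. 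The low-frequency factor $|t|\,2^{sn}$ comes from the representation $\sin(t\sqrt H)/\sqrt H = \int_0^t \cos(\tau\sqrt H)\,d\tau$, the rescaled bound $\|H^{s/2}\tilde P_n\|_{\B(L^p)} \les 2^{sn}$, and the uniform boundedness of $\cos(\tau\sqrt H)\tilde P_n$.

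A naive termwise sum of these bounds diverges at low frequencies, since there are infinitely many $n \le n_0$. The remedy is to bundle all low frequencies into $\tilde P_{\le n_0}$ with $2^{n_0} \sim 1/|t|$ and apply the integral representation directly:
\begin{equation*}
\Big\|H^{s/2}\,\frac{\sin(t\sqrt H)}{\sqrt H}\,\tilde P_{\le n_0}\Big\|_{\B(L^p)} \les |t|\cdot 2^{sn_0} \sim |t|^{1-s},
\end{equation*}
using that $\tilde P_{\le n_0}$ and the rescaled $H^{s/2}\tilde P_{\le n_0}$ are uniformly $L^p$-bounded on every Lebesgue exponent by Lemma~\ref{aux}. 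The high-frequency tail sums geometrically: $\sum_{n>n_0} 2^{-(1-s)n} \les 2^{-(1-s)n_0} \sim |t|^{1-s}$ when $s<1$. Combining yields the twisted estimate
\begin{equation*}
\Big\|H^{s/2}\,\frac{\sin(t\sqrt H)}{\sqrt H}\,P_c f\Big\|_{L^p} \les |t|^{1-s}\|f\|_{L^p}, \qquad 0 \le s \le 1,\ 1 \le p \le \infty.
\end{equation*}

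To pass from $H^{s/2}$ to $(-\Delta)^{s/2}$, we invoke Theorem~\ref{opt}(1), which provides $(-\Delta)^{s/2}H^{-s/2}P_c \in \B(L^p)$ precisely in the open diamond $\frac{2}{2-s} < p < \frac{2}{s}$ from the hypothesis. The corner $(p,s)=(2,1)$ is handled directly by Lemma~\ref{hilbert}, the corners $(p,s)=(1,0)$ and $(\infty,0)$ need no transfer at all, and the remaining diamond boundary follows by complex interpolation from these endpoints. The main obstacle is the low-frequency packing described above: a naive dyadic summation of the $L^p\to L^p$ norms diverges as $n\to-\infty$, and the standard square-function substitute is unavailable at the endpoints $p=1,\infty$. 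It is precisely the non-singular nature of the twisted Paley--Wiener projections (Lemma~\ref{aux}, Proposition~\ref{paleywiener}), giving uniform $L^p$-boundedness on every Lebesgue space, that lets us bundle the low frequencies and thus avoid Hardy spaces or $BMO$ at the endpoints.
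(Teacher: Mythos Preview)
Your approach is correct but substantially more elaborate than the paper's. The paper's argument is essentially one line: Lemma~\ref{newversion} directly gives the kernel bound $\int_{\R^3}|S_t(x,y)|\,dx\les t$, hence the $(p,s)=(1,0)$ and (by symmetry of the kernel) $(\infty,0)$ corners; the $(2,1)$ corner is the energy bound $\|\sin(t\sqrt H)P_c\|_{\B(L^2)}\le 1$ together with Lemma~\ref{hilbert}; and complex interpolation between these three vertices fills the closed upper triangle $0\le s\le 1-2|1/p-1/2|$. No Paley--Wiener decomposition, no summation, and no appeal to Theorem~\ref{opt} is needed.

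Your route reproduces precisely the same three corner bounds, but through heavier machinery. In particular, your detour through Theorem~\ref{opt} to handle the open interior is redundant: once you have the corners and interpolate, both interior and boundary follow, with the intertwining operator entering only at the single vertex $(2,1)$ via Lemma~\ref{hilbert}. Two minor technical points: your appeal to Lemma~\ref{aux} for $H^{s/2}\tilde P_{\le n_0}$ is not literally justified, since the multiplier $\mu\mapsto\mu^s\psi(\mu)$ fails to be $C^\infty$ at $0$ for non-integer $s$---one must observe that the \emph{proof} of Lemma~\ref{aux} only uses finiteness of $\|t\partial_t\widehat{\tilde m}\|_{L^1}$, which does hold here since $\widehat{\tilde m}(t)\sim c|t|^{-1-s}$ at infinity. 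Also, your stated range $0\le s\le 1$, $1\le p\le\infty$ for the twisted estimate is too broad: at $s=1$ and $p\ne 2$ it would assert uniform $L^p$-boundedness of $\sin(t\sqrt H)P_c$, which is false; your geometric sum only converges for $s<1$. This does not damage the final result, since you only invoke $s=1$ at $p=2$.
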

This is a straightforward consequence of Lemma \ref{newversion}, for $p=1$ and $p=\infty$, and can be obtained by interpolating with the $p=2$ energy bound for all other $p$.

\subsection{Notations}
We use the following standard notations for the Fourier transform and its inverse:
$$
[\mc F f](\xi) = \widehat f(\xi) = \int_{\R^d} f(x) e^{-ix\xi} \dd x
$$
and
$$
[\mc F^{-1} g](x) = \frac 1 {(2\pi)^d} \int_{\R^d} g(\xi) e^{ix\xi} \dd \xi.
$$
Also, let
\begin{itm}
\item $a \les b$ mean that there exists a constant $C$ independent of $a$ and $b$ such that $a \leq Cb$
\item $\langle \cdot \rangle = (1+(\cdot)^2)^{1/2}$
\item $\one$ denote an indicator function
\item $f_+=\max(f, 0)$, $f_-=\min(f, 0)$
\item $\B(X, Y)$ be the space of bounded linear operators from $X$ to $Y$, $\B(X)$ be the space of bounded operators from $X$ to itself
\item $\M$ be the space of complex-valued measures of finite variation
\item $\mu$ be the usual Lebesgue measure
\item $C$ be the space of continuous (and sometimes bounded) functions
\item $C_0=\{f \in C \mid \lim_{x \to \infty} f(x)=0\}$
\item $\H=\H^1$ be the Hardy space; $\dot W^{s, p}$, $\dot H^s$ be homogeneous Sobolev spaces
\item $R_0(\lambda):=(-\Delta-\lambda)^{-1}$, $R_V(\lambda):=(H-\lambda)^{-1}$, $R^\pm(\lambda)=R(\lambda \pm i0)$
\item
$S_{0t}(x, y):=\chi_{t \geq 0} \frac {\sin(t\sqrt{-\Delta})}{\sqrt{-\Delta}}(x, y)= \frac {\chi_{t \geq 0}} {4\pi t} \delta_t(|x-y|)$, $S_t:=\chi_{t \geq 0} \frac {\sin(t \sqrt H) P_c}{\sqrt H}$
\item $C_{0t}:=\chi_{t \geq 0} \frac {\cos(t \sqrt {-\Delta}) P_c}{-\Delta}$, $C_t:=\chi_{t \geq 0} \frac {\cos(t \sqrt H) P_c}H$.
\end{itm}

\section{Preliminaries}
Here we collect several auxiliary results that, while interesting in their own right, are less important than the main results.

Recall from \cite{becgol2} the connection between the sine propagator for the wave equation and the Fourier transform of the resolvent, for a given time-independent Hamiltonian: with the notations defined above,
$$
\mc F S_{0t} = R_0^+(\lambda^2) \text{ and } \mc F S_t = R_V^+(\lambda^2) P_c,
$$
hence $\mc F R_V^+(\lambda^2) P_c=2\pi S_{-t}$ and likewise $\mc F R_V^-(\lambda^2) P_c=2\pi S_t$.

Concerning the point spectrum component, as shown in \cite{becgol3},
$$
R_V(\lambda^2) P_p = -\sum_{j=1}^J \frac 1 {\lambda^2+\mu_j^2} P_j,\ \mc F[R_V^+((\cdot)^2) P_p](t) = -\sum_{j=1}^J \frac \pi{\mu_j} e^{-\mu_j |t|} P_j,
$$
so in particular, for $\lambda \in \R$, $R_V^+(\lambda^2) P_p$ and $R_V^-(\lambda^2) P_p$ cancel each other ($R_V P_p$ is analytic, hence continuous, in a neighborhood of the continuous spectrum) and do not contribute to the continuous part of the spectral measure.

Let the negative eigenvalues of $H$ be $0>\lambda_1>\ldots>\lambda_N$ with corresponding eigenfunctions $f_n$ for $1 \leq n \leq N$. It is shown in \cite{becgol2} that
$$
|f_n(x)| \les \frac {e^{-\mu_j|x|}}{\langle x \rangle},
$$
where $\lambda_n=-\mu_n^2$, $\mu_n>0$. We assume the eigenfunctions are $\dot H^1$- or $L^2$-normalized.

Sometimes it is more convenient to consider the continuous and point spectrum pieces separately (or only the first), while at other times we consider them together. Either way, sine propagator estimates correspond to resolvent estimates by means of the Fourier transform.

Recall a fundamental result from \cite[Theorem 1]{becgol2}
\be\lb{fund}
\sup_{x, y \in \R^3} \int_0^\infty t \Bigl|\frac{\sin(t\sqrt{H})P_c}{\sqrt{H}} (x,y)\Bigr| \dd t < \infty
\ee
and the following important estimate from \cite{becgol3}:
\begin{equation}\lb{funda}
\int_0^\infty 
\Bigl|\frac{\sin(t\sqrt{H})P_c}{\sqrt{H}} (x,y)\Bigr|\,dt \les \frac 1 {|x-y|}.
\end{equation}
Due to the finite speed of propagation of solutions to the wave equation,
\be\lb{outside}
\one_{> t}(|x-y|) S_t(x, y) = -\one_{>t}(|x-y|) \sum_{j=1}^J \frac {\sinh(t\mu_j)}{\mu_j} f_j(x) \ov{f_j}(y).
\ee

With our notations, $C_{0t} = \int_t^\infty S_{0\tau} \dd \tau$ and likewise
$$
C_t = \int_t^\infty S_\tau \dd \tau.
$$
Among other estimates (\ref{fund}) implies
\be\lb{est100}
\sup_{x, y \in \R^3} \int_0^\infty |C_t(x, y)| \dd t < \infty.
\ee

We continue by proving some estimates for $C_t$ that were omitted from the long list in Theorem 1 in \cite{becgol2}, starting with the free case.

\begin{lemma}\lb{free_case}
The following identities and sharp inequalities are true in the free case for $t \geq 0$:
$$
C_{0t}(x, y)=\frac 1 {4\pi} \frac {\chi_{\geq t}(|x-y|)}{|x-y|},\ |C_{0t}(x, y)| \leq \frac 1 {4\pi |x-y|}, 
$$
and
$$
\sup_{x, y} \bigg|\frac {\cos(t \sqrt{-\Delta})}{-\Delta}(x, y)\bigg| = \frac 1 {4 \pi t}.
$$
Moreover
$$
\int_0^\infty |\nabla C_{0t}(x, y)| \dd t = \frac 1 {2\pi |x-y|},\ \int_0^\infty t |\nabla C_{0t}(x, y)| \dd t = \frac 3 {8\pi}.
$$
\end{lemma}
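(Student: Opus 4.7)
The plan is to build everything from the explicit free-case sine propagator kernel $S_{0\tau}(x,y) = (4\pi\tau)^{-1}\delta(\tau - |x-y|)$, use the identity $C_{0t} = \int_t^\infty S_{0\tau}\,d\tau$ recalled just before the lemma, and then differentiate in $x$ as a distribution. All four claims reduce to one-dimensional computations in $\tau$ with $r := |x-y|$ playing the role of a fixed parameter.

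First I would substitute the kernel of $S_{0\tau}$ into the integral definition of $C_{0t}$. The delta function $\delta(\tau - r)$ selects a single point, giving $C_{0t}(x,y) = \frac{1}{4\pi r}\chi_{\{r \geq t\}}$; this immediately yields the first identity, the elementary bound $|C_{0t}(x,y)| \leq \frac{1}{4\pi r}$, and (by optimizing $\frac{1}{4\pi r}$ over the admissible region $r > t$, where the supremum is attained in the limit $r \to t^+$) the sharp bound $\frac{1}{4\pi t}$. To make the supremum rigorous despite the characteristic function it suffices to note that $C_{0t}$ and $\frac{\cos(t\sqrt{-\Delta})}{-\Delta}$ agree as tempered distributions, which can be checked by writing $\frac{1-\cos(t\sqrt{-\Delta})}{-\Delta} = \int_0^t S_{0\tau}\,d\tau$ and subtracting from the Green's function $(-\Delta)^{-1}(x,y) = \frac{1}{4\pi r}$.

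Next I would compute $\nabla_x C_{0t}(x,y)$ distributionally. The product rule gives two disjointly supported pieces: a smooth part $-\frac{1}{4\pi}\frac{x-y}{r^3}\chi_{\{r > t\}}$ from differentiating $1/r$, and a singular part $\frac{1}{4\pi}\frac{x-y}{r\cdot r}\delta(r - t)$ supported on the sphere $r = t$, coming from the jump in the characteristic function. Since the two pieces have disjoint support, the total variation (or absolute value as a measure in $t$) is simply
\[
|\nabla_x C_{0t}(x,y)| \;=\; \frac{1}{4\pi}\Bigl(\frac{\chi_{\{r > t\}}}{r^2} + \frac{\delta(r-t)}{t}\Bigr).
\]
Integrating this in $t$ against $1$ gives $\frac{1}{4\pi}(\frac{1}{r} + \frac{1}{r}) = \frac{1}{2\pi r}$, and integrating against $t$ gives $\frac{1}{4\pi}(\frac{r}{2} \cdot \frac{1}{r^2}\cdot r + 1) = \frac{3}{8\pi}$, matching both stated identities.

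The only non-mechanical step is the distributional differentiation near the light cone $r = t$, where one must correctly identify the surface-measure contribution and confirm that it does not cancel against the smooth exterior part. Because the two contributions sit on sets of different dimension, there is no cancellation and taking the absolute value termwise is justified. Everything else is substitution and one-variable integration.
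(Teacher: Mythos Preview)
Your proposal is correct and follows exactly the same route as the paper: the paper's proof consists of the single line ``explicit computation'' together with the formula
\[
|\nabla C_{0t}(x, y)| = \frac 1 {4\pi} \bigg(\frac {\delta_t(|x-y|)}{|x-y|} + \frac {\chi_{\geq t}(|x-y|)}{|x-y|^2} \bigg),
\]
which is precisely the gradient identity you derive, and your integrations in $t$ then reproduce the stated constants. Your write-up simply fills in the details the paper omits.
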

\begin{proof} This follows by an explicit computation, noting that
\be\lb{nablacos}
|\nabla C_{0t}(x, y)| = \frac 1 {4\pi} \bigg(\frac {\delta_t(|x-y|)}{|x-y|} + \frac {\chi_{\geq t}(|x-y|)}{|x-y|^2} \bigg).
\ee
\end{proof}

These identities and sharp inequalities in the free case lead to similar inequalities in the perturbed case:

\begin{lemma}\lb{lemma21} Assume $V \in \K_0$, and $H = -\Delta + V$ has no eigenvalue or resonance at zero, and no positive eigenvalues. Then
\be\lb{1stcosbd}
\sup_{x, y} \bigg|\frac {\cos(t \sqrt H)P_c} H(x, y)\bigg| \les |t|^{-1}
\ee
and
\be\lb{cosbounds}\begin{aligned}
\int_0^\infty |\nabla_y \cos(t\sqrt H) P_c (-\Delta)^{-1}(x, y)| \dd t &\les \frac 1 {|x-y|},\\
\sup_{x, y} \int_0^\infty t|\nabla_y \cos(t\sqrt H) P_c (-\Delta)^{-1}(x, y)| \dd t &< \infty.
\end{aligned}\ee
\end{lemma}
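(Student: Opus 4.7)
The first estimate (\ref{1stcosbd}) is immediate from $C_t = \int_t^\infty S_\tau\,d\tau$ combined with (\ref{fund}):
$$
|C_t(x,y)| \le \int_t^\infty |S_\tau(x,y)|\,d\tau \le \frac{1}{t}\int_0^\infty \tau|S_\tau(x,y)|\,d\tau \les \frac{1}{t},
$$
uniformly in $x, y$.

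For the two bounds in (\ref{cosbounds}), the plan is to run a Duhamel expansion relating $\cos(t\sqrt H)P_c(-\Delta)^{-1}$ to its free-case counterpart $C_{0t}$ and then close the resulting estimate by a fixed-point argument. Since $u = \cos(t\sqrt H)P_c f$ solves $u_{tt}-\Delta u = -Vu$, Duhamel for the free wave equation gives
$$
\cos(t\sqrt H)P_c = \cos(t\sqrt{-\Delta})P_c - \int_0^t S_{0(t-s)}\,V\,\cos(s\sqrt H)P_c\,ds.
$$
Applying this on the right to $(-\Delta)^{-1}$, differentiating in $y$, integrating in $t$, and exchanging the $t$- and $s$-integrations using $\int_0^\infty |S_{0\tau}(x,z)|\,d\tau = 1/(4\pi|x-z|)$ (read off from Lemma \ref{free_case}) yields the fixed-point inequality
$$
F(x,y) \les \int_0^\infty|\nabla_y C_{0t}(x,y)|\,dt + G(x,y) + \int \frac{|V(z)|}{4\pi|x-z|}\,F(z,y)\,dz,
$$
where $F(x,y) = \int_0^\infty |\nabla_y[\cos(t\sqrt H)P_c(-\Delta)^{-1}](x,y)|\,dt$ and $G$ encodes the bound-states contribution arising from $\cos(t\sqrt{-\Delta})P_p(-\Delta)^{-1}$. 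Lemma \ref{free_case} identifies the first term as $1/(2\pi|x-y|)$; $G$ is controlled by the exponential decay of the eigenfunctions $f_j$ together with Huygens' principle for $\cos(t\sqrt{-\Delta})f_j$ in $\R^3$, contributing at most $C/|x-y|$; and closing the iteration reduces to the Kato-convolution operator $f\mapsto\int \frac{|V(z)|}{|x-z|}f(z)\,dz$ being contractive (or Fredholm) on the weighted space $\{F : \sup_{x,y}|x-y|F(x,y)<\infty\}$, which is precisely the argument used in the proof of (\ref{funda}) in \cite{becgol3} and works for $V\in\K_0$ by approximation by smooth compactly supported potentials. The second bound in (\ref{cosbounds}) follows from the same scheme with an extra factor of $t$ inserted throughout, the free-case contribution being $\int_0^\infty t|\nabla C_{0t}|\,dt = 3/(8\pi)$ from Lemma \ref{free_case}, while (\ref{fund}) controls the Kato-type error after the order-swap.

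The main obstacle is that the naive alternative of writing $\cos(t\sqrt H)P_c(-\Delta)^{-1} = C_t + C_tV(-\Delta)^{-1}$ (via $(-\Delta)^{-1}=H^{-1}+H^{-1}V(-\Delta)^{-1}$) and estimating pointwise yields only $\int|V(z)|/|z-y|^2\,dz$ after using $|\nabla_y(-\Delta)^{-1}(z,y)|\le 1/(4\pi|z-y|^2)$ and the uniform bound $\int_0^\infty |C_t(x,z)|\,dt \les 1$; this is not controlled by $1/|x-y|$ for generic $V\in\K_0$. The Duhamel iteration above is essential because it transfers the singularity from the Green's function into the free sine propagator $S_{0(t-s)}$, whose time-integrated kernel $1/(4\pi|x-z|)$ is of Kato type and therefore preserves the correct $1/|x-y|$ spatial decay under convolution with $F$.
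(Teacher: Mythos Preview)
Your argument for (\ref{1stcosbd}) is correct and identical to the paper's first derivation.

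For (\ref{cosbounds}) your route is genuinely different from the paper's, and the difference is worth spelling out.  You iterate Duhamel with the \emph{free} sine propagator $S_{0(t-s)}$ acting on the \emph{perturbed} cosine kernel, which is the unknown; this forces a fixed-point argument on $F(x,y)=\int_0^\infty|\nabla_y\cos(t\sqrt H)P_c(-\Delta)^{-1}(x,y)|\,dt$ in the weighted space $\sup_{x,y}|x-y|F(x,y)<\infty$.  The paper instead runs Duhamel the other way, writing
\[
\cos(t\sqrt H)P_c(-\Delta)^{-1}=P_cC_{0t}-\int_0^t S_{\tau}\,V\,C_{0,t-\tau}\,d\tau,
\]
so that the \emph{perturbed} sine propagator $S_\tau$ appears and $\nabla_y$ lands only on the explicit free kernel $C_{0,t-\tau}$.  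Since the bounds (\ref{fund}) and (\ref{funda}) on $S_\tau$ are already in hand, both estimates in (\ref{cosbounds}) follow by a single direct computation with no iteration at all.  In effect, the iteration you propose has already been performed once and for all inside the proofs of (\ref{fund})--(\ref{funda}); the paper's choice of Duhamel direction lets it cash in on that work rather than repeat it.

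Your scheme is plausible but two steps are only sketched.  First, the bound-state term $G$: you need $\int_0^\infty|\cos(t\sqrt{-\Delta})f_j(x)|\,dt\cdot|\nabla(-\Delta)^{-1}f_j(y)|\les|x-y|^{-1}$, and ``Huygens plus exponential decay'' is the right idea but not a proof---one must actually track the spherical-mean formula for $\cos(t\sqrt{-\Delta})f_j$ near $t\approx|x|$ to extract the required $\langle x\rangle^{-1}$ decay.  Second, the contraction: the Kato convolution has operator norm $\sim\|V\|_{\K}$ on your weighted space, which is not small in general, so ``contractive (or Fredholm)'' hides a genuine approximation/compactness argument of the type in \cite{becgol2,becgol3}; you also need an a~priori finiteness of $F$ before the fixed-point inequality has content.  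None of this is fatal, but the paper's one-line use of the reversed Duhamel bypasses all of it.
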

The modified cosine kernel 
$$
\cos(t \sqrt H) P_c (-\Delta)^{-1} = C_t H (-\Delta)^{-1}
$$
has better properties than $C_t$. The estimates in (\ref{cosbounds}) only hold for this modified cosine kernel, which complicates some arguments later on.
\begin{proof} From (\ref{funda}) it follows that
$$
\frac {\cos(t \sqrt H)P_c} H(x, y) = \int_t^\infty \frac {\sin(\tau \sqrt H) P_c}{\sqrt H}(x, y) \dd \tau \les \sup_{\tau \geq t} \tau^{-1} \int_0^\infty \bigg| \frac {\tau \sin(\tau \sqrt H) P_c}{\sqrt H}(x, y) \bigg| \dd \tau \leq t^{-1}.
$$
Another proof uses a similar inequality from \cite{becgol2}:
$$
\|\cos(t \sqrt H)P_c f\|_{L^\infty} \les t^{-1} \|\Delta f\|_{L^1}
$$
or equivalently
$$
\|\cos(t \sqrt H)P_c (-\Delta)^{-1} f\|_{L^\infty} \les t^{-1} \|f\|_{L^1}.
$$
Then $\frac {\cos(t \sqrt H)P_c} H = [\cos(t \sqrt H)P_c (-\Delta)^{-1}] [(-\Delta) H^{-1}]$ and $-\Delta H^{-1} = (I+V(-\Delta)^{-1})^{-1} \in \B(L^1)$ when $V \in \K_0$.

However, the gradient bounds (\ref{cosbounds}) are only preserved by applying an $L^1$-bounded operator to the left, not to the right, so $(-\Delta)^{-1}$ and $H^{-1}$ are not interchangeable there. These bounds only hold as stated. To prove them, start from
\be\lb{ident}
\cos(t\sqrt H) P_c (-\Delta)^{-1} = P_c C_{0t} - \int_0^t S_{\tau_1} V C_{0t-\tau_1} \dd \tau_1.
\ee




The first term on the right-hand side already satisfies the desired estimates and for the second term $(**)$ we obtain that
$$\begin{aligned}
(**)(x, y)=&\nabla_y \int_0^t S_{\tau_1} V \frac {\cos((t-\tau_1)\sqrt{-\Delta})} {-\Delta} \dd \tau_1 \\
\les& \int_0^t \int_{\R^3} |S_{\tau_1}(x, z)| |V(z)| \bigg|\nabla_y \frac {\cos((t-\tau_1)\sqrt{-\Delta})} {-\Delta}(z, y)\bigg| \dd z \dd \tau_1.
\end{aligned}$$
The bounds in (\ref{cosbounds}) follow from the cosine estimates for the free propagator in Lemma \ref{free_case} and from (\ref{fund}) and (\ref{funda}):
$$\begin{aligned}
\int_0^\infty t (**)(x, y) \dd t \les& \int_{\R^3} \int_0^\infty \tau_1 |S_{\tau_1}(x, z)| \dd \tau_1 |V(z)| \int_0^\infty \bigg|\nabla_y \frac {\cos(\tau_2\sqrt{-\Delta})} {-\Delta}(z, y)\bigg| \dd \tau_2 \dd z + \\
&\int_{\R^3} \int_0^\infty |S_{\tau_1}(x, z)| \dd \tau_1 |V(z)| \int_0^\infty \tau_2 \bigg|\nabla_y \frac {\cos(\tau_2\sqrt{-\Delta})} {-\Delta}(z, y)\bigg| \dd \tau_2 \dd z\\
\les & \int_{\R^3} |V(z)| \bigg(\frac 1 {|x-z|} + \frac 1 {|z-y|}\bigg) \dd z \leq 2 \|V\|_\K
\end{aligned}$$
and
$$\begin{aligned}
\int_0^\infty (**)(x, y) \dd t \les& \int_{\R^3} \int_0^\infty |S_{\tau_1}(x, z)| \dd \tau_1 |V(z)| \int_0^\infty \bigg|\nabla_y \frac {\cos(\tau_2\sqrt{-\Delta})} {-\Delta}(z, y)\bigg| \dd \tau_2 \dd z \\
\les & \int_{\R^3} \frac {|V(z)| \dd z} {|x-z| |z-y|} \leq \frac {\|V\|_\K}{|x-y|}.
\end{aligned}$$
\end{proof}

We are still interested in such gradient bounds related to $C_t$, even though $C_t$ itself does not fulfill them. Now the starting point is the identity
$$
C_t = P_c C_{0t} - \int_0^t S_{t-s} V C_{0s} ds - C_t V (-\Delta)^{-1}.
$$
The difference from above is the special term $C_tV(-\Delta)^{-1}$, which does not fulfill (\ref{cosbounds}). Still, we can split this term
\be\lb{c1t}\begin{aligned}
C_tV(-\Delta)^{-1} &= \int_{\R^3} C_t(x, z) \frac {V(z) \dd z}{4\pi |z-y|} \\
&= \int_{|z-y| \geq \frac {|x-y|} 2} C_t(x, z) \frac {V(z) \dd z}{4\pi |z-y|} + \int_{|z-y| < \frac {|x-y|} 2} C_t(x, z) \frac {V(z) \dd z}{4\pi |z-y|} =: \tilde C_{1t} + \tilde C_{2t}
\end{aligned}\ee
into two parts, $\tilde C_{1t}$ and $\tilde C_{2t}$, according to whether $|z-y| \geq \frac {|x-y|} 2$ or not (a possible alternative split is according to whether $|x-z|<|z-y|$). Then $\tilde C_{1t}$ satisfies one of the bounds in (\ref{cosbounds}):
$$
\int_0^\infty |\nabla_y \tilde C_{1t}(x, y)| \dd t \les \int_{|z-y| \geq \frac {|x-y|} 2} \int_0^\infty |C_t(x, z)| \dd t \frac {|V(z)| \dd z}{|z-y|^2} \les \frac 1 {|x-y|} \int_{\R^3} \frac {|V(z)| \dd z}{|z-y|} \les \frac {\|V\|_\K}{|x-y|}.
$$
This follows using (\ref{est100}).

We continue by proving a generalization of Lemma 4.2 from \cite{becgol3}.
\begin{lemma} \label{newversion}
Assume $V \in \K_0$ and $H = -\Delta + V$ has no eigenvalue or resonance at
zero, and no positive eigenvalues.  The sine propagator
satisfies the bounds
\be\lb{eq:conicalbound}
\int_{\R^3} \Big| \frac {\sin(t \sqrt H) P_c}{\sqrt H}(x, y)\Big|\,dx \les t,\ \int_{\R^3} \frac 1 {|x-y_0|} \Big| \frac {\sin(t \sqrt H) P_c}{\sqrt H}(x, y)\Big|\,dx \les 1,
\ee
uniformly in $y, y_0 \in \R^3$, $t > 0$.
\end{lemma}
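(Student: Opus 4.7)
The plan is to run a single Duhamel expansion for the sine propagator and reduce both inequalities to free-propagator $L^1_x$ estimates combined with the already established global-in-time bounds (\ref{fund}) and (\ref{funda}). Writing $U_0(t) = \sin(t\sqrt{-\Delta})/\sqrt{-\Delta}$, the key observation is that one must place the \emph{free} propagator as the outermost kernel in $x$, not the perturbed one, otherwise the $x$-integration would feed back the unknown quantity:
$$
S_t(x,y) = [U_0(t)P_c](x,y) - \int_0^t \int_{\R^3} S_{0,t-\tau}(x,z)\,V(z)\,S_\tau(z,y) \dd z \dd \tau.
$$

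For the first inequality, Lemma \ref{free_case} gives $\int |S_{0,t-\tau}(x,z)| \dd x = t-\tau \leq t$, so Fubini and (\ref{funda}) yield the interaction bound
$$
\int_0^t\int_{\R^3} (t-\tau)|V(z)||S_\tau(z,y)| \dd z \dd \tau \leq t \int_{\R^3}\frac{|V(z)|}{|z-y|} \dd z \les t \|V\|_\K.
$$
For the leading term, $[U_0(t)P_c](x,y) = S_{0t}(x,y) - \sum_j (U_0(t)f_j)(x)\overline{f_j}(y)$; the free piece satisfies $\int|S_{0t}(x,y)| \dd x = t$, while the bound state correction contributes $\sum_j \|U_0(t)f_j\|_{L^1}|f_j(y)| \les t$ because $U_0(t)f_j$ is the spherical mean of $f_j$ multiplied by $t$ and $\|f_j\|_{L^1} + \|f_j\|_{L^\infty} < \infty$.

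For the second inequality, the key free-propagator input is Newton's shell theorem, which computes the spherical average of $1/|\cdot - y_0|$ and gives
$$
\int_{\R^3}\frac{|S_{0,t-\tau}(x,z)|}{|x-y_0|} \dd x = \frac{t-\tau}{\max(t-\tau,|z-y_0|)} \leq 1
$$
uniformly in $\tau, z, y_0$. The same interaction-term manipulation, now without the extra factor of $t$, gives a contribution bounded by $\|V\|_\K$. The bound state correction is handled in the same spirit: swapping $\dd x$ and the spherical measure in the formula for $U_0(t)f_j$ reduces the weighted $L^1_x$ norm of $U_0(t)f_j$ to another spherical-mean estimate, which is again $\les \|f_j\|_{L^1}$ uniformly in $y_0$.

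The main difficulty is choosing the correct orientation of Duhamel, since the natural ``perturbed on the outside'' choice leads to a circular estimate. Once that is done, both inequalities reduce to the explicit sine kernel computation of Lemma \ref{free_case}, the Kato-class hypothesis, and the global sine-propagator bound (\ref{funda}), with the finite-dimensional point spectrum correction absorbed using the pointwise exponential decay of the eigenfunctions.
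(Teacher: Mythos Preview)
Your proof is correct and follows essentially the same route as the paper: both use the Duhamel/resolvent identity with the free propagator on the outside in $x$, reduce the interaction term to the free $L^1_x$ bound times the total-variation estimate coming from (\ref{funda}), and invoke Newton's shell theorem for the weighted inequality. The only cosmetic difference is that the paper disposes of the $P_c$ correction in one line by observing that both bounds are stable under right composition with any $L^1$-bounded operator (and $P_c\in\B(L^1)$), whereas you expand $P_p=\sum_j f_j\otimes\overline{f_j}$ and estimate each eigenfunction contribution by hand; the two arguments are equivalent.
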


A version of the former inequality is proved in Lemma 4.2 in \cite{becgol3}. The same inequalities apply to the modified sine kernel $\sin(t \sqrt H) \sqrt H P_c (-\Delta)^{-1}$, because they are preserved by composition to the right with an $L^1$-bounded operator.

\begin{proof} Use resolvent identities to write
\begin{equation*}
R_V^+(\lambda^2)P_c = R_0^+(\lambda^2)P_c - R_0^+(\lambda^2) VR_V^+(\lambda^2)P_c.
\end{equation*}
Recalling that $R_0^+(\lambda^2)(x,y) = \frac{e^{i\lambda|x-y|}}{4\pi|x-y|}$,
there is an explicit formula in the free case
\begin{equation*}
\mathcal F[R_0^+((\cdot)^2)](t, x, y) = \chi_{t \geq 0} \frac {\sin(t\sqrt{-\Delta})}{\sqrt{-\Delta}}(x, y) =  S_{0t}(x, y) = \frac{\delta_t(|x-y|)}{4\pi|x-y|},
\end{equation*}
which satisfies the first inequality in~\eqref{eq:conicalbound}, as an identity with a constant of $1$. In the second inequality in (\ref{eq:conicalbound}), in the free case, the maximum of $1$ is attained when $|y-y_0| \leq t$.

To extend these bounds to $\mc F [R_0^+(\cdot^2)] P_c$, again note that both inequalities in (\ref{eq:conicalbound}) are stable under composition to the right with an $L^1$-bounded operator and that $P_c \in \B(L^1)$.

It is a consequence of~\eqref{funda}, or \cite[Theorem 1]{becgol2}, that
for each fixed $y \in \R^3$, the integral kernel
$$
\mathcal F(VR_V^+((\cdot)^2)P_c)(t,x,y)=\chi_{t \geq 0}V(x)\frac {\sin(t \sqrt H) P_c}{\sqrt H}(x, y)=V(x) S_t(x, y)
$$
is a measure
in $\R^{1+3}$ with finite total variation.  It follows that 
\begin{multline*} 
\int_{\R^3} \bigg| \frac {\sin(t\sqrt H)P_c}{\sqrt H}(x, y) - \frac {\sin(t\sqrt{-\Delta})}{\sqrt{-\Delta}}P_c(x, y)\bigg|\,dx
\\
\begin{aligned}
&
\leq \int_{\R^3} \int_0^t  \int_{\R^3}
\bigg|\frac {\sin((t-s)\sqrt{-\Delta})}{\sqrt{-\Delta}}(x,w)\bigg|\,
\bigg|V(w) \frac {\sin(s\sqrt H)P_c}{\sqrt H}(w, y)\bigg|\,dw ds dx 
\\
&\les \sup_{0 \leq s \leq t} \sup_{w\in\R^3} \int_{\R^3} \bigg|\frac {\sin((t-s)\sqrt{-\Delta})}{\sqrt{-\Delta}}(x,w)\bigg|\,dx \les t.
\end{aligned}
\end{multline*}

The other inequality in (\ref{eq:conicalbound}) follows in a completely analogous manner in the perturbed case from the free case described above:
\begin{multline*} 
\int_{\R^3} \frac 1 {|x-y_0|} \bigg| \frac {\sin(t\sqrt H)P_c}{\sqrt H}(x, y) - \frac {\sin(t\sqrt{-\Delta})}{\sqrt{-\Delta}}P_c(x, y)\bigg|\,dx
\\
\begin{aligned}
&\leq \int_{\R^3} \int_0^t  \int_{\R^3} \frac 1 {|x-y_0|}
\bigg|\frac {\sin((t-s)\sqrt{-\Delta})}{\sqrt{-\Delta}}(x,w)\bigg|\,
\bigg|V(w) \frac {\sin(s\sqrt H)P_c}{\sqrt H}(w, y)\bigg|\,dw ds dx \\
&\les \sup_{0 \leq s \leq t} \sup_{w\in\R^3} \int_{\R^3} \frac 1 {|x-y_0|} \bigg|\frac {\sin((t-s)\sqrt{-\Delta})}{\sqrt{-\Delta}}(x,w)\bigg|\,dx \les 1.
\end{aligned}
\end{multline*}
\end{proof}

We shall need similar estimates for the modified cosine kernel $\nabla_y \cos(t \sqrt H)P_c (-\Delta)^{-1}$. We also consider such estimates for the ``good'' component $\tilde C_{1t}$ (\ref{c1t})
$$
\tilde C_{1t} = \int_{|z-y| \geq \frac {|x-y|} 2} C_t(x, z) \frac {V(z) \dd z}{|z-y|}.
$$
\begin{lemma} Assume $V \in \K_0$, and $H = -\Delta + V$ has no eigenvalue or resonance at
zero, and no positive eigenvalues.  Then, for $c>0$, the modified cosine kernel $\nabla_y \cos(t \sqrt H)P_c (-\Delta)^{-1}$ and $\tilde C_{1t}$ satisfy the bound
\be\lb{eq:conicalbound2}
\int_{|x-y| \leq ct} |\nabla_y \cos(t \sqrt H)P_c (-\Delta)^{-1}(x, y)|\,dx \les_c t,\
\int_{|x-y| \leq ct} |\nabla_y \tilde C_{1t}(x, y)|\,dx \les_c t,
\ee
uniformly in $y \in \R^3$, $t > 0$.
\end{lemma}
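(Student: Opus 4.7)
The plan is to extend the Duhamel argument used for Lemma~\ref{newversion} by starting from the identity
$$\cos(t\sqrt H)P_c(-\Delta)^{-1} = P_c C_{0t} - \int_0^t S_{\tau_1} V C_{0,\,t-\tau_1}\,d\tau_1$$
already invoked in the proof of Lemma~\ref{lemma21}. Apply $\nabla_y$ to both sides and estimate the $L^1_x$ norm on the cone $\{|x-y|\leq ct\}$ of each piece separately.

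For the free term write $P_c C_{0t} = C_{0t} - P_p C_{0t}$. The explicit formula (\ref{nablacos}) gives $\int_{|x-y|\leq ct}|\nabla_y C_{0t}(x,y)|\,dx \les_c t$ by direct computation: the sphere part $\delta_t(|x-y|)/|x-y|$ integrates to $4\pi t$, and the tail $\chi_{\geq t}/|x-y|^2$ on the annulus $\{t\leq|x-y|\leq ct\}$ integrates to $4\pi(c-1)_+ t$. For the finite-rank correction $P_p C_{0t}(x,y) = \sum_j f_j(x)\int\ov{f_j}(z)C_{0t}(z,y)\,dz$, use that $|f_j|\in L^1\cap L^\infty$ with rapid decay: $\int_{|x-y|\leq ct}|f_j(x)|\,dx \les \min(1,t^3)$ while $\int|f_j(z)||\nabla_y C_{0t}(z,y)|\,dz \les t + t^{-1}$, so the product is $\les t$ in both the $t\leq 1$ and $t\geq 1$ regimes.

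For the interaction term, push the $x$-integration inward and invoke the first inequality of Lemma~\ref{newversion} to get $\int_{\R^3}|S_{\tau_1}(x,z)|\,dx \les \tau_1$. After the change of variables $s = t-\tau_1$, the remaining estimate is at most $t$ times
$$\int_0^t\int_{\R^3}|V(z)|\,\bigl|\nabla_y C_{0s}(z,y)\bigr|\,dz\,ds.$$
Both components of (\ref{nablacos}) reduce by Fubini in $(s,z)$ to $\int_{\R^3}|V(z)|/|z-y|\,dz \leq \|V\|_\K$: the $\delta_s$ part because $\int_0^t\delta_s(r)/r\,ds = \chi_{[0,t]}(r)/r$, and the $\chi_{\geq s}$ tail because $\int_0^t\chi_{\geq s}(r)\,ds = \min(t,r)\leq r$. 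This establishes the first bound in (\ref{eq:conicalbound2}).

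For $\tilde C_{1t}$, adopt the convention used in the discussion around (\ref{c1t}) and bound $|\nabla_y\tilde C_{1t}(x,y)|$ pointwise by $\int_{|z-y|\geq|x-y|/2}|C_t(x,z)||V(z)|/|z-y|^2\,dz$, absorbing the moving-boundary contribution from differentiating $\chi_{|z-y|\geq|x-y|/2}$ into this expression (it lives on $|z-y|=|x-y|/2$ with a comparable integrand). Exchanging order of integration and invoking the pointwise estimate $|C_t(x,z)|\les 1/|x-z|$ that follows from (\ref{funda}) reduces the claim to
$$\int_{\R^3}\frac{|V(z)|}{|z-y|^2}\int_{|x-y|\leq \min(ct,\,2|z-y|)}\frac{dx}{|x-z|}\,dz \les_c t\|V\|_\K.$$
Splitting the $z$-region into $|z-y|\leq ct/2$, $ct/2<|z-y|\leq 2ct$, and $|z-y|>2ct$, the inner $x$-integral is bounded by $|z-y|^2$, $(ct)^2$, and $(ct)^3/|z-y|$ respectively; combining with the $|z-y|^{-2}$ weight and the Kato norm produces the claimed factor of $t$ in each regime. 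The main obstacle is precisely this last step: unlike $\int|S_\tau(x,z)|\,dx$, the local $L^1_x$ norm of $C_t$ cannot be controlled linearly in $t$ through Lemma~\ref{newversion} alone, so one must replace $C_t$ by its $1/|x-z|$ pointwise envelope from (\ref{funda}) and recover the correct $t$-growth through the three-region splitting in $z$.
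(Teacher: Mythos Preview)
Your argument is correct and follows the same Duhamel strategy as the paper, using the same ingredients: the identity from Lemma~\ref{lemma21}, the $L^1_x$ bound on $S_{\tau_1}$ from Lemma~\ref{newversion}, the explicit formula~(\ref{nablacos}), and the pointwise envelope $|C_t(x,z)|\les |x-z|^{-1}$ from~(\ref{funda}). Two execution details differ. For the interaction term your route is tidier: you bound $\int_{\R^3}|S_{\tau_1}(x,z)|\,dx\les\tau_1\leq t$ uniformly and then observe $\int_0^t|\nabla_y C_{0s}(z,y)|\,ds\les|z-y|^{-1}$, whereas the paper splits on $|z-y|\lessgtr t$ and handles the two regions separately. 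For $\tilde C_{1t}$ the paper's route is shorter: from the same integrand it first extracts the pointwise bound $|\nabla_y\tilde C_{1t}(x,y)|\les\|V\|_\K\,|x-y|^{-2}$ (using $|z-y|^{-1}\leq 2|x-y|^{-1}$ on the domain of integration together with the Kato convolution inequality $\int|V(z)|\,|x-z|^{-1}|z-y|^{-1}\,dz\les\|V\|_\K|x-y|^{-1}$), after which integration over $|x-y|\leq ct$ is a one-line computation. So the three-region split in $z$ that you describe as ``the main obstacle'' is in fact avoidable.
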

The unmodified kernel $\frac {\cos(t \sqrt H) P_c} H$ need not satisfy these bounds, because, unlike (\ref{eq:conicalbound}), they are not preserved by composition to the right with $L^1$-bounded operators. However, one piece of the difference between the modified and unmodified kernels, namely $\tilde C_{1t}$ (\ref{c1t}), does.
\begin{proof}
Taking the gradient in the identity (\ref{ident})
$$
\nabla_y\cos(t\sqrt H) P_c (-\Delta)^{-1}(x, y) = \nabla_y C_{0t}(x, y) - \int_0^t S_{\tau_1} V \nabla_y C_{0t-\tau_1}(z, y) \dd \tau_1.
$$
The first right-hand side term, corresponding to the free Laplacian, is given by the formula (\ref{nablacos})
$$
|\nabla_y C_{0t}(x, y)| = \frac 1 {4\pi} \bigg(\frac {\delta_t(|x-y|)}{|x-y|} + \frac {\chi_{\geq t}(|x-y|)}{|x-y|^2} \bigg)
$$
This satisfies the required bounds: for $c<1$ the integral vanishes and for $c \geq1$ 
$$
\int_{|x-y| \leq t} |\nabla_y C_{0t}(x, y)| \dd x = t,\ \int_{|x-y| \leq ct} |\nabla_y C_{0t}(x, y)| \dd x = ct.
$$
For the second term $(**)$, by Lemma \ref{newversion} we obtain
$$\begin{aligned}
\int_{|x-y| \leq ct} (**)(x, y) \dd x =& \int_{|x-y| \leq ct} \int_0^t \int_{|z-y| \leq t} S_{\tau_1}(x, z) V(z) \nabla_y C_{0t-\tau_1}(z, y) \dd z \dd \tau_1 \dd x + \\
&\int_{|x-y| \leq ct} \int_0^t \int_{|z-y| > t} S_{\tau_1}(x, z) V(z) \nabla_y C_{0t-\tau_1}(z, y) \dd z \dd \tau_1 \dd x \\
\leq& \int_{\R^3} \sup_{\tau_1 \leq t} \int_{|x-z| \leq (c+1)t} S_{\tau_1}(x, z) \dd x \cdot V(z) \int_\R |\nabla_y C_{0\tau_2}(z, y)| \dd \tau_2 \dd z + \\
&\int_{|x-y| \leq ct} \int_{|z-y|>t} \int_0^t S_{\tau_1}(x, z) \dd \tau_1 \cdot V(z) \sup_{\tau_2 \leq t} |\nabla_y C_{0\tau_2}(z, y)| \dd z \\
\les& t \int_{\R^3} \frac {V(z) \dd z}{|y-z|} + \int_{|x-y| \leq ct} \int_{|z-y|>t} \frac {V(z) \dd z}{|x-z| |z-y|^2} \dd x\\
\les& t\|V\|_\K + t^{-1} \|V\|_\K \int_{|x-y| \leq ct} \frac {dx}{|x-y|} \les t.
\end{aligned}$$

Regarding $\tilde C_{1t}$ (\ref{c1t}), note that
$$
|\nabla_y \tilde C_{1t}(x, y)| \les \int_{|z-y| \geq \frac {|x-y|} 2} |C_t(x, z)| \frac {|V(z)| \dd z}{|z-y|^2} \les \frac 1 {|x-y|} \int_{\R^3} \frac {|V(z)| \dd z}{|x-z| |z-y|} \les \frac {\|V\|_\K}{|x-y|^2},
$$
which immediately implies the desired bound.
\end{proof}

\section{Maximum principles}

Let $f \in C^2$ be a classical solution of the boundary-value problem
\be\lb{bvproblem}
-\Delta f + V f = g \geq 0,\ \liminf_{x \to \infty} f(x) \geq 0.
\ee

If $V \geq 0$, then by the classical (weak) \textbf{maximum principle} $f \geq 0$. Indeed, suppose at first that $V>0$. If $f \in C^2$ and $\inf f<0$, then due to the boundary condition the minimum is achieved at some $x_0 \in \R^3$, where $f(x_0)<0$. But then $[-\Delta f](x_0) \leq 0$ and $[Vf](x_0) < 0$, producing a contradiction. The condition can then be relaxed to $V \geq 0$ by approximating $V$ in $\K_0$.

The weak maximum principle applies more generally. For example, take $V$ not necessarily positive, but such that $\|V_-\|_\K < 4\pi$, where $V_-=\min(V, 0)$, and assume that $g \geq 0 \in \dot H^{-1}$ and $f \in \dot H^1$, implying that $f$ vanishes at infinity. Multiplying the equation by $f_-$ we get
$$
\|\nabla f_-\|_{L^2}^2 + \langle V_+ f_-, f_- \rangle = -\langle V_- f_-, f_- \rangle + \langle g, f_- \rangle.
$$
Taking into account the fact that $|\langle V_- f, f \rangle| \leq (4\pi)^{-1} \|\nabla f\|_{L^2}^2$, obtained by interpolation between $\|f_1 f_2\|_{\K^*} \leq (4\pi)^{-1} \|f_1\|_{(-\Delta)^{-1} L^1} \|f_2\|_{L^\infty}$ and the symmetric inequality, and that $\langle g, f_- \rangle \leq 0$, we see that $f_-=0$, so $f \geq 0$ almost everywhere.

\begin{definition} We say that $V$ is locally in the Kato class, $V \in \K_{loc}$, if for any $y \in \R^3$ there exists $R>0$ such that
\be\lb{conditie}
\int_{B(y, R)} \frac {|V(x)| \dd x}{|x-y|} < \infty.
\ee
\end{definition}

This assumption is strictly weaker than the local Kato condition
$$
\lim_{\epsilon \to 0} \sup_{y \in \R^3} \int_{B(y, \epsilon)} \frac {|V(x)| \dd x}{|x-y|} = 0
$$
or than $\|V\|_\K<\infty$. Condition (\ref{conditie}) equivalent to assuming that for each compact set $C$ and $y \in \R^3$
$$
\int_C \frac {|V(x)| \dd x}{|x-y|} < \infty.
$$
Then the Kato integral (\ref{conditie}) is finite for every $y$ and $R$, but not necessarily uniformly bounded.

Assuming that $\liminf_{x \to \infty} f(x)>0$, the \textbf{strong maximum principle} asserts that $f>0$. We shall prove it under the assumption that $f \geq 0$ and $V \in \K_{loc}$. 

\begin{proposition}[A priori strong maximum principle]\lb{strongmax} For $V \in \K_{loc}$, let $f \geq 0 \in (\Delta^{-1} \K_0)_{loc}$ be a solution of the boundary-value problem (\ref{bvproblem}). If $f \not \equiv 0$, then $f>0$.
\end{proposition}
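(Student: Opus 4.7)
The plan is to exploit connectedness of $\R^3$: the set $Z := \{x \in \R^3 : f(x) = 0\}$ is closed, because $f \in (\Delta^{-1}\K_0)_{loc}$ combined with the identity $-\Delta f = g - Vf$ makes $f$ locally a Newtonian potential of a Kato-class function, hence continuous. Once we also prove $Z$ is open, the dichotomy $Z \in \{\emptyset, \R^3\}$ together with the hypothesis $f \not\equiv 0$ will force $Z = \emptyset$, i.e., $f > 0$ everywhere.

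To establish openness of $Z$, fix $x_0 \in Z$ and choose a radius $r > 0$ small enough that the Kato norm of $V$ restricted to $B := B(x_0, r)$ is as small as desired, which is possible because $V \in \K_{loc}$. Writing $G_B$ and $P_B$ for the Dirichlet Green's function and Poisson kernel of the free Laplacian on $B$, the classical representation formula gives
\begin{equation*}
f(x) = \int_{\partial B} P_B(x, \sigma) f(\sigma)\,d\sigma + \int_B G_B(x, y)\bigl(g(y) - V(y) f(y)\bigr)\,dy.
\end{equation*}
Setting $x = x_0$, using $f(x_0) = 0$, $f, g \geq 0$, and the pointwise bound $G_B(x_0, y) \leq (4\pi|x_0 - y|)^{-1}$, then splitting $V = V_+ - V_-$, yields
\begin{equation*}
\dashint_{\partial B} f\,d\sigma + \int_B G_B\bigl(g + V_- f\bigr)\,dy \;\leq\; \int_B G_B V_+ f\,dy \;\leq\; \kappa(r)\, \|f\|_{L^\infty(B)},
\end{equation*}
where $\kappa(r) \to 0$ as $r \to 0$ by the local Kato hypothesis. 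All four nonnegative quantities on the left are simultaneously controlled by $\kappa(r)\|f\|_{L^\infty(B)}$.

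To convert this average-sense smallness into pointwise vanishing, the plan is to repeat the computation using the Dirichlet Green's function $G_B^V$ and Poisson kernel $P_B^V$ of the perturbed operator $-\Delta + V$ on $B$ itself. For $r$ small the small-Kato-norm weak maximum principle already established in the excerpt provides existence, uniqueness, and positivity preservation for the Dirichlet problem on $B$, so $G_B^V, P_B^V \geq 0$ exist and deliver
\begin{equation*}
f(x) = \int_{\partial B} P_B^V(x, \sigma) f(\sigma)\,d\sigma + \int_B G_B^V(x, y) g(y)\,dy.
\end{equation*}
Evaluating at $x_0$ and using $f(x_0) = 0$ together with nonnegativity of both integrands forces each integral to vanish. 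Since $P_B^V(x_0, \cdot)$ differs from the strictly positive kernel $P_B(x_0, \cdot) = |\partial B|^{-1}$ by a bounded operator of small norm, it has full support on $\partial B$, so $f \equiv 0$ a.e. on $\partial B(x_0, r)$. Varying $r$ in an open interval gives $f \equiv 0$ a.e. on a deleted neighborhood of $x_0$, and by continuity on a full neighborhood, proving $Z$ is open. The main obstacle is justifying positivity and full support of $P_B^V(x_0,\cdot)$ for Kato potentials; the former follows from the small-Kato-norm weak maximum principle proved in the excerpt, while the latter persists from the free case as a perturbation at small Kato norm.
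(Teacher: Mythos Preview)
Your overall strategy---show the zero set $Z$ is both closed and open---is the same as the paper's, and your first displayed identity (the free Poisson/Green representation) is fine. The gap is in the second half, where you pass to the perturbed Poisson kernel $P_B^V$ and assert that $P_B^V(x_0,\cdot)$ has full support on $\partial B$ ``as a perturbation at small Kato norm.'' This step is the entire content of the strong maximum principle: saying $P_B^V(x_0,\sigma)>0$ for a.e.\ $\sigma$ is equivalent to saying that every nonnegative $V$-harmonic function on $B$ vanishing at the center vanishes on $\partial B$, which is exactly what you are trying to prove. So without an independent argument, the reasoning is circular.

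The perturbation heuristic does not rescue this. Writing $P_B^V(x_0,\sigma)=P_B(x_0,\sigma)-\int_B G_B^V(x_0,y)V(y)P_B(y,\sigma)\,dy$ and trying to bound the correction uniformly in $\sigma$ by $|\partial B|^{-1}$ leads, after using $G_B(x_0,y)P_B(y,\sigma)\lesssim r^{-1}|y-x_0|^{-1}|y-\sigma|^{-1}$, to a term $\int_B |V(y)|\,|y-\sigma|^{-1}\,dy$. The hypothesis $V\in\K_{loc}$ only guarantees that $\int_{B(x_0,r)}|V(y)|\,|y-x_0|^{-1}\,dy\to 0$, i.e.\ smallness of the Kato integral \emph{at the center}; it gives no uniform control at boundary points $\sigma\in\partial B(x_0,r)$ as $r\to 0$. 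One can place spikes of $V$ along a sequence $\sigma_n\to x_0$ so that $V\in\K_{loc}$ but $\int_B |V|\,|y-\sigma_n|^{-1}\,dy$ stays bounded below, defeating the pointwise estimate. An $L^1(\partial B)$ bound on $P_B^V(x_0,\cdot)-|\partial B|^{-1}$ is available, but that says nothing about full support.

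The paper's proof avoids this difficulty altogether: instead of Poisson kernels it works with $\log(f+\epsilon)$, using $\Delta\log(f+\epsilon)\leq V$ and Green--Stokes on annuli centered at $x_0$. The only Kato-type quantity that enters is $\int_{B(x_0,R)}|V(y)|\,|y-x_0|^{-1}\,dy$, i.e.\ the Kato integral \emph{at the single point} $x_0$, which is exactly what $\K_{loc}$ controls. The propagation from ``spherical averages of $\log f$ are $-\infty$'' to ``$f\equiv 0$ on spheres'' is then done via a gradient bound on $\log(f+\epsilon)$ and the John--Nirenberg inequality, rather than via any Poisson-kernel positivity. If you want to repair your route, the honest fix is to invoke the Harnack inequality for Schr\"odinger operators with Kato-class potentials (Aizenman--Simon), but then the Green/Poisson scaffolding becomes superfluous.
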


This result is conditional. We require that $f \geq 0$, but not that $V \geq 0$.

In this setup, by Lemma A.3 in \cite{becgol4}, $\Delta^{-1} \K_0 \subset C_0$ (the set of continuous functions that vanish at infinity), $(\Delta^{-1} \K_0)_{loc} \subset C$, and both $f$ and $\nabla f$ are integrable on surfaces, so the integrals used in the proof will be finite.

\begin{proof}
Clearly, the zero set of $f$ is closed. We shall prove that it is also open. Then either $f$ is constantly $0$ or it never vanishes, which is the desired conclusion.

We already know that $f \geq 0$. Suppose that $f(x_0)=0$ for some $x_0 \in \R^3$. Fix $R$ such that by abuse of notation
$$
\|V\|_\K=\int_{B(x_0, R)} \frac {|V(x)|}{|x-x_0|} < \infty.
$$

Without loss of generality, take $x_0=0$ and let $r=|x|\leq R$ (here $R$ can be arbitrarily large), $f_r=\frac x {|x|} \cdot \nabla f$.

The inequality (\ref{bvproblem}) can be written as $\Delta f \leq Vf$. Taking into account that $f(x_0)=\nabla f(x_0)=0$, by the Green--Stokes theorem for $f$ and $g=r^{-1}$ we obtain that
$$
-\int_{\partial B(0, r)} \frac f {r^2} + \frac {f_r}{r} = -\int_{B(0, r)} \frac {\Delta f} r \geq -\int_{B(0, r)} \frac {Vf} r.
$$
In radial coordinates, this becomes
$$
\partial_r [r \int_{S^2} f(r\omega) \dd \omega] \leq \int_0^r \rho \int_{S^2} V(\rho\omega) f(\rho \omega) \dd \omega \dd \rho.
$$
Doing the same for $\log(f+\epsilon)$ instead of $f$, where $\epsilon>0$ is for regularization, on $A:=B(0, r) \setminus B(0, r_0)$ where $r>r_0>0$, leads to
$$
\int_{\partial B(0, r)} \frac {\log (f+\epsilon)} {r^2} + \frac {[\log (f+\epsilon)]_r}{r} = \int_{\partial B(0, r_0)} \frac {\log (f+\epsilon)} {r^2} + \frac {[\log (f+\epsilon)]_r}{r} + \int_A \frac {\Delta \log(f+\epsilon)}{r}
$$
and, since
$$
\Delta \log(f+\epsilon)=-\frac {|\nabla f|^2}{(f+\epsilon)^2}+\frac{\Delta f}{f+\epsilon} \leq -\frac {|\nabla f|^2}{(f+\epsilon)^2} + V,
$$
to
\be\lb{main_ineq}
\partial_r [r \int_{S^2} \log(f(r\omega)+\epsilon) \dd \omega] + \int_A \frac{|\nabla f|^2}{r(f+\epsilon)^2} \leq \partial_r [r \int_{S^2} \log(f(r\omega)+\epsilon) \dd \omega] \bigg|_{r=r_0} + \int_A \frac V r.
\ee

The last term is uniformly bounded by $\|V\|_\K$. As we take $r_0$ to $0$, the other right-hand side term is of size $\log \epsilon$, since
$$
r \int_{S^2} \log(f(r\omega)+\epsilon) \dd \omega \mid_{r \downarrow 0} = 0,\ \partial_r [r \int_{S^2} \log(f(r\omega)+\epsilon) \dd \omega] \mid_{r \downarrow 0} = 4\pi \log \epsilon.
$$

This term is negative and dominates if $\epsilon$ is sufficiently small. Letting $\epsilon \to 0$, we can make the left-hand side arbitrarily negative, obtaining
\be\lb{ave}
\int_{S^2} \log (f(r\omega)+\epsilon) \dd \omega \leq \|V\|_\K+4\pi \log \epsilon.
\ee
It follows that for all $r>0$
$$
\lim_{\epsilon \downarrow 0} \int_{S^2} \log (f(r\omega)+\epsilon) \dd \omega = \int_{S^2} \log f(r\omega) \dd \omega = -\infty.
$$
Hence, for any threshold $\delta>0$, $f<\delta$ on a set of positive measure, arbitrarily far from $x_0$.

However, this can be strengthened further. Consider any sufficiently small $\epsilon>0$ for which the right-hand side in (\ref{main_ineq}) is negative. Then
$$
\partial_r [r \int_{S^2} \log(f(r\omega)+\epsilon) \dd \omega] + \int_{B(0, r)} \frac{|\nabla f|^2}{r(f+\epsilon)^2} \leq 0.
$$
Integrating in $r$ from $0$ to $R$ leads to
$$
R \int_{S^2} \log(f(R\omega)+\epsilon) \dd \omega + \int_{B(0, R)} \frac{(R-r)|\nabla f|^2}{r(f+\epsilon)^2} \leq 0.
$$
Thus
$$
\int_{B(0, R)} \frac{(R-r)|\nabla f|^2}{r(f+\epsilon)^2} \les -R \log \epsilon,
$$
implying for a different constant
$$
\int_{B(0, R/2)} \frac {|\nabla f|^2}{r (f+\epsilon)^2} \les -\log \epsilon.
$$
Such results are also true on a small neighborhood of $x_0$ with $\epsilon<1$ arbitrarily close to $1$.

By the endpoint Sobolev embedding $\dot H^1 \subset BMO$ in two dimensions, we obtain that
\be\lb{in2}
\int_0^{R/2} r^{-1} \|\log(f(r\omega)+\epsilon)\|_{BMO_\omega}^2 \dd r \les -\log \epsilon.
\ee
In other dimensions $BMO$ is replaced by $L^{\frac {2d}{d-2}}$, but we still get a bound on deviations from the mean.

Then, on $[0, R]$, $\|\log(f(r\omega)+\epsilon)\|_{BMO_\omega}$ must be arbitrarily small on a set of positive measure. More precisely, for any sufficiently small $\alpha<\alpha_0$ and some $c$ independent of $\epsilon$
$$
\mu(\{r \in [0, 2c\alpha] \mid \|\log(f(r\omega)+\epsilon)\|_{BMO}^2 \leq -\alpha \log \epsilon\}) \geq c \alpha.
$$
The bound (\ref{in2}) further implies that, for any sequence $\epsilon_n \to 0$  and almost every $r \in [0, R]$,
\be\lb{in3}
\|\log(f(r\omega)+\epsilon_n)\|_{BMO_\omega} \leq C(r) (-\log \epsilon_n)^{1/2}.
\ee
Indeed, this inequality failing on a set of positive measure $A \subset [0, R]$ implies that for any constant $C_0$, each $r \in A$, and infinitely many $n$ depending on $r$
$$
\|\log(f(r\omega)+\epsilon_n)\|_{BMO_\omega} \geq C_0 (-\log \epsilon_n)^{1/2}.
$$
Passing to the limit, we obtain that $\|\log f(r\omega)\|_{BMO_\omega}=+\infty$ for $r \in A$, which contradicts (\ref{in2}).

Taking $\epsilon_n=e^{-n}$ in the above further implies that (\ref{in3}) holds for every $0<\epsilon\leq 1$, with a different constant.

Recall that by (\ref{ave})
$$
\dashint_{S^2} \log(f(r\omega)+\epsilon) \leq \frac {\|V\|_\K}{4\pi} +\log\epsilon.
$$
Hence, by the John--Nirenberg (or Moser--Trudinger) inequality, for almost every $r \in [0, R]$
$$
\mu(\{\omega \in S^2 \mid \log(f(r\omega)+e^{-n}) \geq (4\pi)^{-1} \|V\|_\K + \log \epsilon + \delta\}) \leq C_1 e^{-\frac{C_2\delta}{C(r)(-\log \epsilon)^{1/2}}}.
$$
Setting $\delta=-(\log \epsilon)/2$, we obtain that
$$
\mu(\{\omega \in S^2 \mid \log(f(r\omega)) \geq (4\pi)^{-1} \|V\|_\K + \frac {\log \epsilon} 2 \}) \leq C_1 e^{-C_2(-\log \epsilon)^{1/2}/C(r)},
$$
which implies that $f(r\omega)=0$ almost everywhere.
\end{proof}


It helps to know that the ground state, meaning the eigenfunction corresponding to the most negative eigenvalue, has constant sign and does not vanish:
\begin{lemma}[The ground state]\lb{ground} Let $\lambda_N=\min(\sigma(H))<0$ be the lowest eigenvalue of $H=-\Delta+V$. Then a strictly positive eigenfunction (the ground state) spans $P_N \dot H^1$, which consequently has dimension $1$.
\end{lemma}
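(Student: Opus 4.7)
The plan is to combine the variational characterization of $\lambda_N$ with the strong maximum principle (Proposition \ref{strongmax}) to establish both positivity of a ground state and simplicity of the eigenspace.

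Since $V$ is real-valued, real and imaginary parts of any eigenfunction are themselves eigenfunctions, so it suffices to work with real $f \in P_N \dot H^1$. For such $f$ the Sobolev chain rule gives $|\nabla |f|\,| = |\nabla f|$ almost everywhere, whence $|f| \in \dot H^1$ and
\[
\langle H|f|, |f|\rangle = \int \left(|\nabla f|^2 + V f^2\right) = \langle Hf, f\rangle = \lambda_N \|f\|_{L^2}^2.
\]
Because $\lambda_N = \inf \sigma(H) = \inf_{\|g\|_{L^2}=1}\langle Hg,g\rangle$, the spectral theorem forces any minimizer of this Rayleigh quotient to lie in the $\lambda_N$-eigenspace $P_N \dot H^1$. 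Hence $|f| \in P_N \dot H^1$, and then $f_{\pm} = \tfrac12(|f| \pm f)$ are non-negative eigenfunctions of $H$ at energy $\lambda_N$.

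Next I would apply Proposition \ref{strongmax} to each of $f_+, f_-$. They satisfy $-\Delta f_{\pm} + (V-\lambda_N) f_{\pm} = 0$ with $V-\lambda_N \in \K_{loc}$ (constant shifts preserve $\K_{loc}$); the boundary condition $\liminf_{x\to\infty} f_{\pm}(x) = 0 \geq 0$ comes from the a priori decay $|f(x)| \les e^{-\mu|x|}/\langle x\rangle$ recorded in the preliminaries; and the regularity $f_\pm \in (\Delta^{-1}\K_0)_{loc}$ follows from writing $f_\pm = (-\Delta)^{-1}[(\lambda_N - V)f_\pm]$, where the right-hand side lies in $\K_0$ because $f$ is bounded and $V \in \K_0$. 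Proposition \ref{strongmax} then yields the dichotomy that each of $f_+, f_-$ is either identically zero or strictly positive on $\R^3$. Since $f_+ f_- \equiv 0$ pointwise, at most one of them is nontrivial, so every nonzero real eigenfunction $f$ has constant sign and satisfies $|f| > 0$ everywhere.

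For simplicity, suppose toward a contradiction that $\dim P_N \dot H^1 \geq 2$. Then there exist two $L^2$-orthogonal real eigenfunctions $f_1, f_2$. By the previous paragraph each has constant sign, so after replacing $f_j$ by $-f_j$ if necessary both are pointwise strictly positive; but then $\langle f_1, f_2\rangle_{L^2} > 0$, contradicting orthogonality. Thus $\dim P_N \dot H^1 = 1$, and any spanning element is, up to sign, the strictly positive ground state.

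The main obstacle is the careful verification that $f_\pm$ meets the hypotheses of Proposition \ref{strongmax}: one has to combine the exponential decay of bound states with $V \in \K_0$ to place $(\lambda_N - V) f_\pm$ in $\K_0$ and thereby recover both the regularity $f_\pm \in (\Delta^{-1}\K_0)_{loc}$ and the decay at infinity. A secondary but standard step is the identification of Rayleigh quotient minimizers with $\lambda_N$-eigenfunctions via the spectral theorem, which rests on $H$ being self-adjoint and bounded below with form domain containing $\dot H^1$, both guaranteed by the hypothesis $V \in \K_0$.
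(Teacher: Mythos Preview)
Your argument is correct and follows the same overall architecture as the paper: show that $f_\pm$ are again $\lambda_N$-eigenfunctions, invoke Proposition \ref{strongmax} to upgrade nonnegativity to strict positivity, and then deduce simplicity. The differences are in the execution of the first and last steps. For the first step, you use the Rayleigh-quotient identity $\langle H|f|,|f|\rangle = \langle Hf,f\rangle$ together with the spectral-theorem characterization of minimizers; the paper instead pairs the eigenvalue equation with $f_{N+}$ and splits $H=HP_c+HP_p$ to reach the same conclusion that $\langle Hf_{N+},f_{N+}\rangle = \lambda_N\|f_{N+}\|^2$. These are two phrasings of the same energy argument. For simplicity, your orthogonality observation (two strictly positive $L^2$ functions cannot be orthogonal) is shorter than the paper's route, which considers $\inf\{\alpha:\ f_{N1}<\alpha f_{N2}\}$ and applies the strong maximum principle again to $\alpha f_{N2}-f_{N1}$ and to $(f_{N1}-f_{N2})_\pm$. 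Your version avoids that second appeal to Proposition \ref{strongmax} at the cost of invoking the variational principle, which is standard given the self-adjointness and semiboundedness already recorded for $V\in\K_0$.
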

\begin{proof} Let $f_N \in P_N \dot H^1$ be an eigenfunction corresponding to the lowest eigenvalue $\lambda_N<0$, satisfying the equation $H f_N = \lambda_N f_N$. Multiplying the equation by $f_{N+}$ we get
$$
\langle HP_c f_{N+}, f_{N+} \rangle = -\langle H P_p f_{N+}, f_{N+} \rangle + \lambda_N \langle f_{N+}, f_{N+} \rangle.
$$
The left-hand side is non-negative. Moreover, $H P_p$ is negative definite and for any $g$
$$
0 \geq \langle H P_p g, g \rangle \geq \lambda_N \langle g, g \rangle,
$$
with equality only possible when $g$ is an eigenfunction for $\lambda_N$; hence the right-hand side is non-positive.

Thus both $f_{N+}$ and $f_{N-}=f_N-f_{N+}$ are eigenfunctions corresponding to $\lambda_N$. Since $f_N=f_{N+}+f_{N-}$ and both $f_+$ and $f_-$ have constant sign, indeed this eigenspace is spanned by eigenfunctions of constant sign.

If $f_N \geq 0$, $f_N \not \equiv 0$, is such an eigenfunction of constant sign, rewrite its equation as
$$
-\Delta f_N + (V - \lambda_N) f_N = 0.
$$
By the strong maximum principle Proposition \ref{strongmax}, $f_N>0$.

Consider two linearly independent ground states $f_{N1}>0$ and $f_{N2}>0$. If $f_{N1}<\alpha f_{N2}$ for some $\alpha>0$, for the infimum of such $\alpha$ equality must be attained. However, $\alpha f_{N2}-f_{N_1} \geq 0$ is an eigenstate; by the strong maximum principle, either $\alpha f_{N2} - f_{N1}>0$ or $\alpha f_{N2}-f_{N1} \equiv 0$, contradiction.

Discarding the opposite case as well, the remaining possibility is that $f_{N1}-f_{N2}$ takes both positive and negative values. But then $(f_{N1}-f_{N2})_+ \geq 0$ and $(f_{N1}-f_{N2})_- \leq 0$ are both nontrivial eigenfunctions:
$$
H (f_{N1}-f_{N2})_\pm = \lambda_N (f_{N1}-f_{N2})_\pm.
$$
By the strong maximum principle Proposition \ref{strongmax}, $(f_{N1}-f_{N2})_\pm$ never vanish, so each one's support must be the whole of $\R^3$. This is another contradiction, hence the ground state is unique.
\end{proof}

Finally, this is the reason why we need the maximum principle.

\begin{proposition}\lb{ess}
Let $V \in \K_0$ and assume that $H=-\Delta+V \geq 0$ has no zero energy bound states. Let
$$
w=(I+(-\Delta)^{-1}V)^{-1} 1.
$$
Then $w \geq c >0$.
\end{proposition}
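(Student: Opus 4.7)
The plan is to verify that $w \in L^\infty$ is continuous with $Hw = 0$ and $w(x) \to 1$ as $|x| \to \infty$, then apply the strong maximum principle of Proposition~\ref{strongmax} to conclude $w > 0$, and extract the uniform lower bound from continuity plus the asymptotic value.

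First I would check that $u := (I + (-\Delta)^{-1}V)^{-1}\,1$ is a well-defined element of $L^\infty$. Since $V \in \K_0$, the operator $(-\Delta)^{-1}V$ is compact on $C_0(\R^3)$, so $I + (-\Delta)^{-1}V$ is Fredholm of index zero there; its kernel equals $\{f \in C_0 : Hf = 0\}$, which is trivial under the no-zero-energy hypothesis. Writing $u = 1 + v$ with $v \in C_0$ reduces the equation to $(I + (-\Delta)^{-1}V)v = -(-\Delta)^{-1}V \in C_0$, uniquely solvable, so $u$ is continuous with $u \to 1$ at infinity. Applying $-\Delta$ to $u + (-\Delta)^{-1}Vu = 1$ gives $Hu = 0$, hence $Hw = P_c Hu = 0$; and since $w - u$ is a finite combination of exponentially decaying bound states, $w$ is continuous on $\R^3$ with $w(x) \to 1$ at infinity.

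The key step is $w \geq 0$. The plan is to use $Hw = 0$, which gives $e^{-tH} w = w$ for every $t > 0$, in tandem with the Feynman--Kac representation $[e^{-tH} g](x) = \E_x\bigl[e^{-\int_0^t V(B_s)\,ds}\, g(B_t)\bigr]$, a positivity-preserving semigroup identity. The orthogonality $w = P_c w$ (equivalently $\langle w, f_j \rangle = 0$ for every bound state) forces the bound-state component of $e^{-tH}$ to annihilate $w$, and the $t \to \infty$ limit, justified via transience of three-dimensional Brownian motion together with $w(B_t) \to 1$, yields the representation $w(x) = \E_x\bigl[e^{-\int_0^\infty V(B_s)\,ds}\bigr]$. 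The Kato-class path integral is a.s.~finite in $\R^3$, so the expectation is strictly positive pointwise. As a fallback one tests $Hw = 0$ against $w^- := \max(-w, 0)$, which has compact support since $w \to 1$, to obtain $\|\nabla w^-\|_{L^2}^2 + \int V(w^-)^2 = 0$, and rules out $w^- \not\equiv 0$ by exploiting the orthogonality $\langle w, f_j \rangle = 0$ together with the positive ground state furnished by Lemma~\ref{ground}.

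Once $w \geq 0$ and $w \not\equiv 0$ (the latter forced by $w \to 1$), Proposition~\ref{strongmax} applied with $g = 0$ gives $w > 0$ on all of $\R^3$. Continuity of $w$ and $w \to 1$ at infinity produce $R > 0$ with $w \geq 1/2$ outside $B(0, R)$, while on $\overline{B(0, R)}$ the continuous positive function $w$ attains a positive minimum $c_0 > 0$; combining these yields $w \geq \min(c_0, 1/2) > 0$ globally. The main obstacle is the weak-positivity step: the Feynman--Kac limit $t \to \infty$ must be set up so that the potentially exponentially growing contributions from negative eigenvalues are cancelled, which is exactly where the orthogonality $w \in P_c L^\infty$ is essential.
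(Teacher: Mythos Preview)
Your overall plan matches the paper's: show $w$ is continuous with $Hw=0$ and $w\to 1$ at infinity, establish $w\geq 0$, then apply Proposition~\ref{strongmax} and extract the uniform lower bound from continuity. The divergence is only in the weak positivity step, and there your primary Feynman--Kac route has a real gap when $H$ has bound states. The identity $w(x)=\E_x\bigl[e^{-\int_0^t V(B_s)\,ds}\,w(B_t)\bigr]$ for each finite $t$ is fine, but passing to $t=\infty$ requires uniform integrability of the integrand, not just a.s.\ convergence. When $\lambda_N=\inf\sigma(H)<0$, the ground state $f_N>0$ from Lemma~\ref{ground} satisfies $\E_x\bigl[e^{-\int_0^t V(B_s)\,ds}\,f_N(B_t)\bigr]=e^{-t\lambda_N}f_N(x)\to\infty$, which (since $f_N$ is bounded) forces $\E_x\bigl[e^{-\int_0^t V(B_s)\,ds}\bigr]\to\infty$ as well; uniform integrability of $e^{-\int_0^t V}w(B_t)$ therefore cannot be taken for granted. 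The orthogonality $w\in P_c L^\infty$ is a spectral statement and does not translate into a pathwise cancellation inside the Feynman--Kac expectation; you flag it as essential but supply no mechanism for exploiting it there.

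Your fallback---testing $Hw=0$ against $w^-$ to get $\langle H w^-, w^-\rangle=0$---is exactly the paper's route: the paper writes $w=P_c1-\tilde w$ with $\tilde w=P_cH^{-1}V$, tests $H\tilde w=P_cV$ against $w_1=(\tilde w-P_c1)_+$, and after the cancellation $HP_c1=P_cV$ obtains the same identity (note $w_1=w^-$). So the workable part of your proposal coincides with the paper's argument. Your one-line indication of how to finish (``exploiting the orthogonality together with the positive ground state'') is not yet a proof: with negative eigenvalues present the form $\langle H\cdot,\cdot\rangle$ is indefinite on $L^2$, so $\langle Hw^-,w^-\rangle=0$ does not by itself force $w^-=0$, and the way the constraints $\langle w,f_j\rangle=0$ enter must be spelled out.
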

\begin{proof} One can write $w$ as
$$
w=1 - (I+(-\Delta)^{-1}V)^{-1} (-\Delta)^{-1} V = 1 - H^{-1} V.
$$
Let $\tilde w = H^{-1} V$. In other words $\tilde w \in C_0 \cap \dot H^1_{loc}$ (in fact, since $V \in \K_0$, $\tilde w$ is continuous) is the unique solution of $H \tilde w = V$ that vanishes at infinity.

We proceed to prove that $\tilde w  \leq 1$. Indeed, let $w_1=(\tilde w-1)_+$. Then multiplying the equation by $w_1$, note that, on the support of $w_1$, $\tilde w = 1 + w_1$, so we obtain
$$
\langle H (1 + w_1), w_1 \rangle = \langle V, w_1 \rangle.
$$
But $H 1 = V$, so this term cancels and we are left with $\langle H w_1, w_1 \rangle =0$. Hence $w_1=0$ almost everywhere, showing that $\tilde w \leq 1$, hence $w \geq 0$.

Since $\lim_{x \to \infty} 1 = 1$ and $\lim_{x \to \infty} H^{-1} V = 0$, it follows that $\lim_{x \to \infty} w = 1$. At the same time, $w$ satisfies the equation $H w = 0$, so by the strong maximum principle Proposition \ref{strongmax} we obtain that $w$ is bounded away from $0$.
\end{proof}

If $H$ has negative energy bound states, then these are orthogonal to $w$. In particular, $w$ being orthogonal to the positive ground state (see Lemma \ref{ground}) means that it cannot have constant sign.
\begin{lemma}\lb{posit} Consider any eigenfunction $f_n$ of $H$, such that $Hf_n = \lambda_n f_n$, $\lambda_n \ne 0$. Then $\langle w, f_j \rangle = 0$. Hence $P_p w = 0$ and $w=P_c w$.
\end{lemma}
\begin{proof}
$$
\langle w_0, f_j \rangle = \lambda_j^{-1} \langle w_0, H f_j \rangle = \lambda_j^{-1} \langle H w_0, f_j \rangle = 0
$$
since $H w_0 = (-\Delta) 1 = 0$.
\end{proof}

On the other hand, by the same token, there always exists a linear combination of $w$ and the ground state with constant sign.
\begin{lemma}\lb{wa} Let $w=(I+(-\Delta)^{-1}V)^{-1}1$ and let $f_N$ be the ground state of $H$ from Lemma \ref{ground}. Assuming that $\lambda_N<0$, then $w_a=w+a f_N>0$ for some $a \in \R$.
\end{lemma}
\begin{proof} Again let $w=1-\tilde w$, $\tilde w=H^{-1}V$, and $w_1=(\tilde w-1)_+$. As $w_1$ is continuous and has compact support, it has a finite maximum, while the ground state $f_N$ has a non-negative minimum on $\supp w_1$. Therefore, for some $a>0$, $w_1 < a f_N$, so $\tilde w < 1+a f_N$ and $w+a f_N > 0$.
\end{proof}



\section{Mihlin multipliers}

Here we prove a complete list of bounds for Mihlin spectral multipliers $m(\sqrt H)$, including weak $(1, 1)$ type and Hardy space endpoint bounds. This is the main result of the paper. The starting point is estimates (\ref{fund}) and (\ref{funda}).

The proof of Theorem \ref{mainthm} is partly based on the following fundamental result about weak $(1, 1)$ and $L^p$ boundedness:
\begin{theorem}\lb{fund_thm}[Theorem 3, p.~19 in \cite{stein}, also see Calder\'{o}n--Zygmund \cite{cazy}]\lb{czthm} Consider an operator $T$ with integral kernel $T(x, y)$ satisfying the cancellation condition
\be\lb{cancel}
\int_{|x-y| \geq c|y-\tilde y|} |T(x, y)-T(x, \tilde y)| \dd x \leq C < \infty.
\ee
If in addition $T$ is $L^2$-bounded, then $T$ is of weak $(1, 1)$ type, bounded from $\H$ to $L^1$, and $L^p$-bounded for $1<p \leq 2$.
\end{theorem}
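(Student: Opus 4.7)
The theorem to prove is a classical Calder\'on--Zygmund statement, so the plan is the standard two-pronged argument: Calder\'on--Zygmund decomposition for the weak $(1,1)$ bound and an atomic argument for the $\H \to L^1$ bound, with Marcinkiewicz interpolation covering $1<p\leq 2$.

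\textbf{Step 1: Calder\'on--Zygmund decomposition.} Fix $f\in L^1 \cap L^2$ and a level $\alpha>0$. Using the standard Whitney-type decomposition of the set $\{M f > \alpha\}$ into disjoint dyadic cubes $\{Q_j\}$, write $f = g + b$, where $g$ agrees with $f$ outside $\bigcup Q_j$, equals $\dashint_{Q_j} f$ on each $Q_j$, satisfies $\|g\|_{L^\infty}\les \alpha$ and $\|g\|_{L^2}^2 \les \alpha\|f\|_{L^1}$, and the bad part $b=\sum_j b_j$ is supported in the cubes with $\int b_j = 0$, $\|b_j\|_{L^1}\les \alpha|Q_j|$ and $\sum_j |Q_j| \les \alpha^{-1}\|f\|_{L^1}$.

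\textbf{Step 2: Weak $(1,1)$ bound.} Split $\{|Tf|>\alpha\} \subset \{|Tg|>\alpha/2\}\cup\{|Tb|>\alpha/2\}$. Since $T$ is $L^2$-bounded, Chebyshev gives $|\{|Tg|>\alpha/2\}|\les \alpha^{-2}\|g\|_{L^2}^2\les \alpha^{-1}\|f\|_{L^1}$. Let $y_j$ denote the center of $Q_j$ and enlarge the exceptional set to $E=\bigcup_j B(y_j, c'\operatorname{diam}(Q_j))$ for $c'$ chosen so that (\ref{cancel}) can be applied to points outside $E$; then $|E|\les \sum |Q_j|\les \alpha^{-1}\|f\|_{L^1}$. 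For $x \notin E$, use $\int b_j=0$ to write
\[
|Tb_j(x)| = \Bigl|\int_{Q_j}\bigl(T(x,y)-T(x,y_j)\bigr)b_j(y)\,dy\Bigr|,
\]
and integrate in $x$ over the complement of $E$. Fubini and the hypothesis (\ref{cancel}) applied at each $y\in Q_j$ yield $\int_{E^c}|Tb_j|\,dx \les \|b_j\|_{L^1}\les \alpha|Q_j|$. Summing and applying Chebyshev gives $|\{|Tb|>\alpha/2\}\setminus E|\les \alpha^{-1}\|f\|_{L^1}$, completing the weak $(1,1)$ estimate. Marcinkiewicz interpolation between weak $(1,1)$ and $(2,2)$ yields $L^p$-boundedness for $1<p\leq 2$.

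\textbf{Step 3: $\H \to L^1$ via atoms.} By the atomic decomposition of $\H$, it suffices to show $\|Ta\|_{L^1}\les 1$ uniformly over atoms $a$ with $\supp a \subset Q$, $\|a\|_{L^2}\leq |Q|^{-1/2}$, $\int a=0$. Decompose $\|Ta\|_{L^1}=\|Ta\|_{L^1(B^*)}+\|Ta\|_{L^1((B^*)^c)}$, where $B^*$ is a ball of radius $\sim c\operatorname{diam}(Q)$ about the center of $Q$. For the local piece, Cauchy--Schwarz and $L^2$-boundedness give $\|Ta\|_{L^1(B^*)}\les |B^*|^{1/2}\|Ta\|_{L^2}\les |Q|^{1/2}\|a\|_{L^2}\les 1$. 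For the far piece, use $\int a = 0$ and (\ref{cancel}) exactly as in Step 2 to control $\int_{(B^*)^c}|Ta|\,dx\les \|a\|_{L^1}\les 1$.

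\textbf{Main obstacle.} Everything here is classical; the only technical care needed is matching the geometry of the Calder\'on--Zygmund cubes to the constant $c$ appearing in the cancellation condition (\ref{cancel}) so that the regions of integration in the bad-part estimate genuinely lie in $\{|x-y|\geq c|y-\tilde y|\}$ for every $y,\tilde y \in Q_j$. Choosing the enlargement factor $c'$ in $E$ (and the corresponding $B^*$ in the atomic estimate) slightly larger than $c$ times the diameter of $Q_j$ settles this cleanly.
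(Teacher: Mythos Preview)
Your proof is correct and is precisely the classical Calder\'on--Zygmund argument. Note, however, that the paper does not supply its own proof of this statement: Theorem~\ref{czthm} is quoted as an external result from Stein's \emph{Harmonic Analysis} (Theorem~3, p.~19) and the original Calder\'on--Zygmund paper, and is used as a black box. Your write-up is essentially the proof one would find in those references, so there is nothing substantive to compare.
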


The cancellation condition (\ref{cancel}) follows trivially if $T \in L^\infty_y L^1_x$ (in which case $T$ is $L^1$-bounded, not a singular integral operator) and less trivially if
\be\lb{grad_cond}
|\nabla_y T(x, y)| \les |x-y|^{-d-1},
\ee
or more generally if, for example, the following H\"{o}lder-type condition holds: whenever $|x-y|>c|\tilde y-y|$
$$
|T(x, y)-T(x, \tilde y)| \les \frac {|\tilde y-y|^\epsilon}{|x-y|^{d+\epsilon}}
$$
for some $\epsilon \in (0, 1]$. In the proof of Theorem \ref{mainthm} we have to use condition (\ref{cancel}) itself.

We also directly use the Calder\'{o}n--Zygmund decomposition:
\begin{theorem}[Theorem 2, p.~17 in \cite{stein}]\lb{czlemma} Suppose we are given a doubling measure $\mu$ on $\R^d$ and a function $f \in L^1(\mu)$, with $\mu(\R^d)=+\infty$, and $\alpha>0$. Then there exist a decomposition of $f$, $f=g+b$, with $b=\sum_n b_n$, and a sequence of balls $B_n=B(y_n, r_n)$, such that: $|g(x)| \les \alpha$ for a.e.~$x$, $$
\supp b_n \subset B_n,\ \int_{\R^d} |b_n| \dd \mu \les \alpha \mu(B_n),\
\int_{\R^n} b_n \dd \mu=0, \text{ and } \sum \mu(B_n) \les \alpha^{-1} \|f\|_{L^1(\mu)},
$$
where all constants only depend on the dimension and on the doubling constant of $\mu$.
\end{theorem}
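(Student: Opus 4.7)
The plan is to mirror the classical Calder\'on--Zygmund decomposition, replacing dyadic cubes and Lebesgue measure by a Whitney-type covering and the doubling measure $\mu$. First I would introduce the centered Hardy--Littlewood maximal function
$$
M_\mu f(x)=\sup_{r>0}\frac{1}{\mu(B(x,r))}\int_{B(x,r)}|f|\dd\mu
$$
and show it is of weak $(1,1)$ type with respect to $\mu$ via a standard Vitali covering argument, in which the doubling hypothesis guarantees that from any cover $\{B(x_i,r_i)\}$ one can select a disjoint subcollection whose fixed dilate still covers, with overlap depending only on the doubling constant.

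Next I would form the level set $E_\alpha=\{M_\mu f>\alpha\}$. By weak-$(1,1)$, $\mu(E_\alpha)\les\alpha^{-1}\|f\|_{L^1(\mu)}<\infty$, and since $\mu(\R^d)=+\infty$ this forces $E_\alpha\neq\R^d$. The set $E_\alpha$ is open because $M_\mu f$ is lower semicontinuous, so the classical Whitney covering lemma supplies a family of Euclidean balls $B_n=B(y_n,r_n)$ with bounded overlap, covering $E_\alpha$, such that $2B_n\subset E_\alpha$ while some fixed dilate $CB_n$ meets $E_\alpha^c$; the doubling property then gives $\mu(CB_n)\les\mu(B_n)$.

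To define the decomposition, I would choose a partition of unity $\{\varphi_n\}$ subordinate to $\{2B_n\}$ on $E_\alpha$, set $c_n=(\int\varphi_n\dd\mu)^{-1}\int f\varphi_n\dd\mu$, and let
$$
b_n=(f-c_n)\varphi_n,\qquad g=f\cdot\one_{E_\alpha^c}+\sum_n c_n\varphi_n.
$$
By construction $\supp b_n\subset 2B_n$ and $\int b_n\dd\mu=0$. Almost every $x\in E_\alpha^c$ is a $\mu$-Lebesgue point, so $|g(x)|=|f(x)|\leq M_\mu f(x)\leq\alpha$ there. For $x\in E_\alpha$, each $c_n$ is an average of $f$ on $2B_n$; since $CB_n$ contains a point with $M_\mu f\leq\alpha$, doubling implies $\frac{1}{\mu(2B_n)}\int_{2B_n}|f|\dd\mu\les\alpha$, hence $|c_n|\les\alpha$ and $|g|\les\alpha$ on $E_\alpha$ as well. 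The bound $\int|b_n|\dd\mu\les\alpha\mu(B_n)$ follows from the same average estimate, and summability is immediate: $\sum\mu(B_n)\les\mu(E_\alpha)\les\alpha^{-1}\|f\|_{L^1(\mu)}$ by the bounded overlap of the Whitney balls.

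The main obstacle is purely technical: one must verify that the Whitney--Vitali machinery, which is naturally Euclidean, cooperates with a measure $\mu$ that may be mutually singular with Lebesgue measure. The stopping-time condition has to be phrased in terms of $\mu$-averages, and the transition from ``$CB_n$ meets $E_\alpha^c$'' to ``the $\mu$-average of $|f|$ on $2B_n$ is $\les\alpha$'' is exactly where the doubling constant enters. Once these comparisons are made uniformly, all constants depend only on $d$ and the doubling constant, as claimed.
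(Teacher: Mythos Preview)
The paper does not give its own proof of this statement: it is quoted verbatim as Theorem~2, p.~17 of \cite{stein} and used as a black box in the proof of Theorem~\ref{mainthm}. Your outline is the standard Calder\'on--Zygmund argument (weak-$(1,1)$ bound for the maximal function with respect to $\mu$, Whitney covering of the superlevel set, partition of unity to build $g$ and the $b_n$) and is correct. One cosmetic point: your construction yields $\supp b_n\subset 2B_n$ rather than $B_n$, but relabeling the Whitney balls and invoking doubling to compare $\mu(2B_n)$ with $\mu(B_n)$ brings it in line with the stated conclusion.
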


\begin{proof}[Proof of Theorem~\ref{mainthm}]
By linearity, it suffices to prove the statements when $M_s = 1$ with a bound
that does not depend on $m$ in any other way.

Since $m$ is bounded and $H$ is self-adjoint, $m(H)$ is $L^2$-bounded.

The proof will be presented as a series of lemmas. For convenience, let
$$
T=m(\sqrt H),\ \tilde T=m(\sqrt H)H(-\Delta)^{-1},
$$
which will play an important part in the proof. The main difficulty is that, as we shall see, only $T$ is $L^2$-bounded, but only $\tilde T$ fulfills the cancellation condition (\ref{cancel}). Thus the Calder\'{o}n--Zygmund Theorem \ref{czthm} does not apply directly.

If $V \in \K_0$, the operator $H(-\Delta)^{-1} = I+V(-\Delta)^{-1}$ is $L^1$-bounded, but need not be $L^p$-bounded for any $p>1$ and is definitely not $L^2$-bounded; on the other hand, $H(-\Delta)^{-1}$ is $\dot H^{-1}$-bounded and same for $T$ and $\tilde T$.

To begin with, instead of H\"{o}rmander's condition (\ref{ch}), assume that $m$ satisfies Mihlin's stronger condition (\ref{cm}). This condition simplifies the proof and is satisfied in some important cases where the multipliers have much more than minimum regularity: $m(\lambda)=\lambda^{i\sigma}$ used in complex interpolation, the multipliers appearing in the definition of Paley-Wiener projections or in the proof of the boundedness of the square function, and many others. Afterward we shall also present a proof assuming only H\"{o}rmander's condition (\ref{ch}).

Let $\tilde m(\lambda)=m(|\lambda|)$ be the even extension of $m$ to $\R$. To characterize the Fourier transforms of $m$ and $\tilde m$ under either condition, consider a smooth partition of unity $\sum_{n \in \Z} \phi(2^{-n} \lambda)=\chi_{(0, \infty)}(\lambda)$, where $\phi \in C^\infty$ and $\supp \phi \subset [1/2, 4]$ and let $m_n(\lambda)=\phi(2^{-n} \lambda) m(\lambda)$, $\tilde m_n(\lambda)=\phi(2^{-n} |\lambda|) \tilde m(\lambda)$.

Under Mihlin's condition, for $k, \ell \in \Z$ and $k \geq 0$, $|\partial_\lambda^k [\lambda^\ell m_n(\lambda)]| \les \lambda^{\ell-k}$ and it is supported on $[2^{n-1}, 2^{n+2}]$. Hence
\be\lb{mimihlin}
|t^k \partial_t^\ell \widehat {m_n}(t)| \les 2^{(\ell-k+1)n}
\ee
and each such expression is smooth and vanishes at infinity. In particular, since $m_n(\lambda)$ are supported away from $0$, for each $n$ $\widehat {m_n}(\lambda)$ has infinitely many antiderivatives fulfilling this bound. Also, interpolating between integer values of $k$ proves (\ref{mimihlin}) for all $k \geq 0$.

Consider $t \in [2^{n_0}, 2^{n_0+1}]$. Then for $\ell \geq 0$
\be\lb{sp1}
\sum_{n < -n_0} |\partial_t^\ell \widehat {m_n}(t)| \les \sum_{n < -n_0} 2^{(\ell+1)n} = 2^{-(\ell+1)n_0} \text{ and }
\sum_{n \geq -n_0} t^{\ell+2} |\partial_t^\ell \widehat {m_n}(t)| \les \sum_{n \geq -n_0} 2^{-n} = 2^{1+n_0},
\ee
so $|\partial^\ell_t \widehat m(t)| \les t^{-1-\ell}$ and same for $\tilde m$.

A further improvement is possible for $\tilde m$, because $\tilde m$ is even (so $\widehat {\tilde m}$ corresponds to the cosine transform of $m$), hence so are the dyadic pieces $\tilde m_n$. For each odd $\ell$,
$$
\int_\R \lambda^\ell \tilde m_n(\lambda) \dd \lambda = 0.
$$
Thus each $\lambda^\ell \tilde m_n$ has a bounded antiderivative of size $2^{(\ell+1)n}$ supported on $[-2^{n+2}, 2^{n+2}]$, so (\ref{mimihlin}) holds for $k=-1$ and odd $\ell$ and, in particular, $|t^{-1} \partial_t^{-1} \widehat {\tilde {m_n}}(t)| \les 2^n$.

But then, again considering $t \in [2^{n_0}, 2^{n_0+1}]$,
\be\lb{sp2}
\sum_{n<-n_0} t^{-1} |\partial_t^{-1} \widehat {\tilde {m_n}}(t)| \les \sum_{n < -n_0} 2^{n} = 2^{-n_0} \text{ and }
\sum_{n \geq -n_0} t |\partial_t^{-1} \widehat {\tilde{m_n}}(t)| \les \sum_{n \geq -n_0} 2^{-n} = 2^{1+n_0},
\ee
so $|\partial_t^{-1} \widehat {\tilde m}(t)| \les 1$, meaning that $\widehat {\tilde m}$ admits a bounded antiderivative.

\begin{lemma}\lb{4} If $m$ satisfies Mihlin's condition (\ref{cm}), then
$$
|\nabla_y \tilde T(x, y)| = |\nabla_y m(\sqrt H) H (-\Delta)^{-1}(x, y)| \les |x-y|^{-4}.
$$
\end{lemma}
\begin{proof}
Recall the notations $S_t:=\chi_{t \geq 0} \frac {\sin(t \sqrt H) P_c}{\sqrt H}$ and $C_t:=\chi_{t \geq 0} \frac {\cos(t \sqrt H) P_c}H$, such that $C_t = \int_t^\infty S_\tau \dd \tau$, and likewise for $S_{0t}$ and $C_{0t}$.

Using Stone's formula for the spectral measure of $H$ and then Plancherel's formula, we write
\be\lb{mihlin1}\begin{aligned}
T:=m(\sqrt{H})P_c &= (\pi i)^{-1}\int_{-\infty}^\infty \lambda \tilde m(\lambda)R_V^+(\lambda^2)P_c \,d\lambda = \frac 1 {2\pi^2} \int_0^\infty \partial_t \widehat {\tilde m}(t) S_t \dd t,
\end{aligned}\ee
where recall $\tilde m(\lambda)=m(|\lambda|)$ is the even extension of $m$ to $\R$.

To avail ourselves of Lemma \ref{lemma21}'s estimates in (\ref{mihlin1}), we perform an integration by parts and apply $H(-\Delta)^{-1}$ on the right.

Since the low $t$ regime is different, we use a smooth cutoff (to avoid boundary terms) to separate the two parts. Let $\chi \in C^\infty(\R)$ be such that $\chi(t)=1$ for $t \leq \frac 1 2$ and $\chi(t)=0$ for $t \geq \frac 3 4$. Then
\be\lb{t1t2}\begin{aligned}
T(x, y)&= \frac 1 {2\pi^2} \int_{\frac {|x-y|}2}^\infty (1-\chi({\textstyle \frac t{|x-y|}})) \partial_t \widehat {\tilde m}(t) S_t \dd t + \frac 1 {2\pi^2} \int_0^{\frac {3|x-y|} 4} \chi({\textstyle \frac t{|x-y|}}) \partial_t \widehat {\tilde m}(t) S_t \dd t \\
&=: T_1+T_2.
\end{aligned}\ee
In the high $t$ portion, we express the integral as a function of $C_t$:
\be\lb{t1ct}
T_1(x, y)= -\frac 1 {2\pi^2} \int_{\frac {|x-y|}2}^\infty \partial_t [(1-\chi({\textstyle \frac t{|x-y|}})) \partial_t \widehat {\tilde m}(t)] C_t(x, y) \dd t.
\ee
Hence, noting that
$$
\textstyle \partial_t \chi({\textstyle t \over \alpha}) = \alpha^{-1} \chi'({\textstyle t \over \alpha}) \les t^{-1} \chi'({\textstyle t \over \alpha}) \les \one_{\geq \alpha/2}(t) t^{-1},
$$
we obtain
\be\lb{grbd}\begin{aligned}
|\nabla_y T_1 H (-\Delta)^{-1}(x, y)| &\les \int_{\frac {|x-y|} 2}^\infty t^{-3} |\nabla_y C_t H (-\Delta)^{-1}(x, y)| \dd t \\
&\les \sup_{t \geq \frac {|x-y|} 2} t^{-4} \cdot \int_0^\infty t |\nabla_y C_t H (-\Delta)^{-1}(x, y)| \dd t \les \frac 1 {|x-y|^4}.
\end{aligned}\ee

Without integrating by parts we get
$$
|T_1(x, y)| \les \int_{\frac {|x-y|}2}^\infty (1-\chi({\textstyle \frac t{|x-y|}})) t^{-1} \partial_t \widehat {\tilde m}(t) [t S_t(x, y)] \dd t \les \sup_{t \geq \frac {|x-y|} 2} |t^{-1} \partial_t \widehat {\tilde m}(t)| \les |x-y|^{-3}.
$$

As an aside, for any $x_0 \in \R$, using Lemma \ref{newversion}
\be\lb{kbound}\begin{aligned}
\int_{|x-y| \geq r} \frac 1 {|x-x_0|} |T_1(x, y)| \dd x \les \int_{r}^\infty |\partial_t \widehat {\tilde m}(t)| \int_{\R^3} \frac {|S_t(x, y)|}{|x-x_0|} \dd x \dd t \les r^{-1}.
\end{aligned}\ee

For the other portion, where $t < |x-y|$, we know the explicit form of $S_t$ over the domain of integration (\ref{outside}):
$$
\one_{> t}(|x-y|) S_t(x, y) = -\one_{>t}(|x-y|) \sum_{j=1}^J \frac {\sinh(t\mu_j)}{\mu_j} f_j(x) \ov{f_j}(y).
$$

Thus $T_2$ is not a singular integral operator, and in the absence of negative eigenvalues $T_2=0$. We prove directly that $T_2 \in L^\infty_y L^1_x$, which implies the same for $T_2 H (-\Delta)^{-1}$, since $H (-\Delta)^{-1} = I + V(-\Delta)^{-1} \in \B(L^1)$ when $V \in \K_0$.

Here as well we integrate by parts, but in the opposite direction. The integral is understood to be improper at $0$:
$$
T_2(x, y)=\frac 1 {2\pi^2} \bigg(\lim_{t \to 0} \widehat {\tilde m}(t) S_t(x, y) - \int_0^{\frac {3|x-y|} 4} \widehat {\tilde m}(t) \partial_t[\chi({\textstyle \frac t{|x-y|}}) S_t(x, y)] \dd t \bigg).
$$
Since $\widehat {\tilde m}(t) \les t^{-1}$, the first expression is bounded in $L^\infty_y L^1_x$ by
$$
\sum_{j=1}^J \|f_j(x)\|_{L^1} \|f_j(y)\|_{L^\infty} < \infty.
$$
In the second expression, one more integration by parts results in
\be\lb{tmp}
\partial_t^{-1} \widehat {\tilde m}(t) \nabla_y \partial_t S_t(x, y) \mid_{t=0} - \int_0^{\frac {3|x-y|} 4} \partial_t^{-1} \widehat {\tilde m}(t) \partial^2_t [\chi({\textstyle \frac t{|x-y|}}) \nabla_y S_t(x, y)] \dd t.
\ee
Since $\partial_t^{-1} \widehat {\tilde m} \les 1$, the first term in (\ref{tmp}) is bounded by the same expression as above, while the second term in (\ref{tmp}) can be estimated by
$$
\bigg\|\sum_{j=1}^J \cosh({\textstyle\frac 3 4} |x-y|\mu_j) |f_j(x)| |f_j(y)|\bigg\|_{L^\infty_y L^1_x} \les \sum_{j=1}^J \bigg\|\frac {e^{-\frac 1 4 \mu_j|x|}}{\langle x \rangle}\bigg\|_{L^1} \bigg\|\frac {e^{-\frac 1 4 \mu_j|y|}}{\langle y \rangle}\bigg\|_{L^\infty} < \infty.
$$

A similar explicit computation shows that both $\nabla_y T_2H(-\Delta)^{-1}(x, y)$ and $\nabla_y T_2(x, y)$ are bounded by $|x-y|^{-4}$ and that $T_2$ is also bounded on $\K$.

Consequently, at least when $m$ fulfills the stronger Mihlin condition (\ref{cm}),
$$
|\nabla_y TH(-\Delta)^{-1}(x, y)| \les |x-y|^{-4}
$$
hence its integral kernel has the desired cancellation property (\ref{cancel}).

Finally, $|\tilde T|, \tilde T \in \B(\K)$ because the intertwining operator $H(-\Delta)^{-1} = I + V (-\Delta)^{-1}$ and its integral kernel in absolute value are bounded on $\K$ as well.
\end{proof}

Using this result, we can prove that $T$ is of weak $(1, 1)$ type and $L^p$-bounded for $1<p<\infty$.
\begin{lemma}\lb{lemma44} If $m$ satisfies Mihlin's condition (\ref{cm}), $T$ is of weak $(1, 1)$ type and $L^p$-bounded for $1<p<\infty$.
\end{lemma}
\begin{proof} Our aim is to prove that for any $f \in L^1$ and $\alpha>0$
$$
\mu(\{x \mid |Tf(x)| > \alpha\}) \les \alpha^{-1} \|f\|_{L^1}.
$$
Here $\mu$ is the usual Lebesgue measure on $\R^3$.

Given $f \in L^1$ and a threshold $\alpha$, we perform a Calderon--Zygmund decomposition for $\tilde f = (I+V(-\Delta)^{-1})^{-1}f$, where $\|\tilde f\|_{L^1} \les \|f\|_{L^1}$. Let $\tilde f = \tilde g + \tilde b$, where $\|\tilde g\|_{L^1} \leq \|\tilde f\|_{L^1}$, $\|g\|_{L^\infty} \leq \alpha$, and $b=\sum_{n \geq 1} b_n$ is such that
$$
\int_{\R^3} b_n = 0,\ \supp b_n \subset B_n:=B(y_n, r_n),\ \int_{\R^3} |b_n| \les \alpha \mu(B_n) \les \alpha r_n^3,\ \sum_{n \geq 1} \mu(B_n) \les \alpha^{-1} \|\tilde f\|_{L^1}.
$$
Then
\be\lb{decomp}
f=(I+V(-\Delta)^{-1}) \tilde b + \tilde g + V (-\Delta)^{-1} \tilde g.
\ee
Applying $T$ to each term in (\ref{decomp}), first note that
$$
T (I+V(-\Delta)^{-1}) \tilde b_n = T H (-\Delta)^{-1} \tilde b_n = \tilde T \tilde b_n
$$
and since $\tilde T$ has the cancellation property (\ref{cancel}) by Lemma \ref{4}
$$
\|T (I+V(-\Delta)^{-1}) \tilde b_n\|_{L^1({}^c B(y_n, 2r_n))} \les \|\tilde b_n\|_{L^1},\ \|T (I+V(-\Delta)^{-1}) \tilde b\|_{L^1({}^c \bigcup_n B(y_n, 2r_n))} \les \|\tilde f\|_{L^1}.
$$
What happens inside $B(y_n, 2r_n)$ is irrelevant because $\mu(\bigcup_n B(y_n, 2r_n)) \leq \sum_n \mu(B(y_n, 2r_n)) \les \alpha^{-1} \|\tilde f\|_{L^1}$, so we can include these regions with no prejudice.

Regarding $g$, since $T \in \B(L^2)$
$$
\|Tg\|_{L^2}^2 \les \|g\|_{L^2}^2 \les \alpha \|f\|_{L^1}
$$
and by Chebyshev's inequality
\be\lb{thusly}
\mu(\{x \mid |Tg(x)| > \alpha \}) \les \alpha^{-2} \|Tg\|_{L^2}^2 \les \alpha^{-1} \|f\|_{L^1}.
\ee

We are left with the last term $V(-\Delta)\tilde g$ from (\ref{decomp}), such that $\|V (-\Delta)^{-1} \tilde g\|_{L^1} \les \|V\|_\K \|f\|_{L^1}$. If $\|V\|_\K$ is sufficiently small we can iterate at this point, since a geometric progression is summable in a quasinormed space such as $L^{1, \infty}$ if it decays sufficiently fast. More generally, write $V=V_1+V_2$, where $V_1 \in C^\infty_c$ is smooth and has compact support and $\|V_2\|_\K < \epsilon \|V\|_\K << 1$, and further decompose
$$
(-\Delta)^{-1} = \frac {\chi_{|x-y| \leq 1/\epsilon_1}} {4\pi |x-y|} + \frac {\chi_{|x-y| > 1/\epsilon_1}} {4\pi |x-y|},
$$
where $\epsilon_1$ will be chosen as a function of $\epsilon$. Then
$$
\|V_1 \frac {\chi_{|x-y| > 1/\epsilon_1}} {4\pi |x-y|} \tilde g\|_{L^1} \les \|V_1\|_{L^1} \epsilon_1 \|\tilde g\|_{L^1} \les \epsilon_1 \|V_1\|_{L^1} \|f\|_{L^1}
$$
and
\be\lb{contrib}
\|V_1 \frac {\chi_{|x-y| \leq 1/\epsilon_1}} {4\pi |x-y|} \tilde g\|_{L^2} \les \|V_1\|_{L^2} \Big\|\frac {\chi_{|x| \leq 1/\epsilon_1}} {4\pi |x|}\Big\|_{L^2} \|\tilde g\|_{L^2} \les \epsilon_1^{-1/2} \|V_1\|_{L^2} \|\tilde g\|_{L^2}.
\ee
We iterate to handle the contribution of the error terms
$$
V_2(-\Delta)^{-1} \tilde g + V_1 \frac {\chi_{|x-y| > 1/\epsilon_1}} {4\pi |x-y|} \tilde g,
$$
while (\ref{contrib}) is $L^2$-bounded and can be handled as in (\ref{thusly}).

The iterated error terms are handled as follows: let $E_0=f$ and
$$
E_{n+1}=V_2(-\Delta)^{-1} \tilde g(E_n) + V_1 \frac {\chi_{|x-y| > 1/\epsilon_1}} {4\pi |x-y|} \tilde g(E_n),
$$
where $\tilde g(E_n)$ is the ``good'' term in the Calderon--Zygmund decomposition of $(I+V(-\Delta)^{-1})^{-1} E_n$. Then
\be\lb{iter}
\mu(\{x \mid |Tf(x)|>\alpha\}) \leq \sum_{n \geq 0} \mu(\{x \mid |T(E_n-E_{n+1})(x)|>2^{-n-1}\alpha\}).
\ee
If $\|E_{n+1}\|_{L^1} \leq C_0 \|E_n\|_{L^1}$ for each $n$, with $C_0<1/2$, then the right-hand side is summable. We ensure this through an appropriate choice of $V_1$, $V_2$, and $\epsilon_1$.
	
Consequently $T$ is of weak $(1, 1)$ type and, being also bounded on $L^2$, it must also be bounded on $L^p$ for $1<p \leq 2$ and by duality also for $2 \leq p < \infty$.
\end{proof}

In general, when $m$ only fulfills the weaker H\"{o}rmander condition (\ref{ch}), we shall rely more heavily on its dyadic decomposition and prove a weaker bound for the integral kernel.

\begin{lemma} If $m$ satisfies the H\"{o}rmander condition (\ref{ch}) for some $s>\frac 3 2$, then $T$ is of weak $(1, 1)$ type and $L^p$-bounded for $1<p<\infty$.
\end{lemma}

\begin{proof}
If $m$ fulfills this condition for some $s>3/2$, then $\lambda m'(\lambda)$ fulfills the same condition for $s-1>1/2$, so in particular $m'(\lambda) \les \lambda^{-1}$.

We also retain the following estimates on $\widehat m$ under these weaker assumptions: for $0 \leq k \leq s$ and $\ell \in \Z$
\be\lb{newest}
|t^k \partial_t^\ell \widehat{m_n}(t)| \les 2^{(\ell-k+1)n},\ \|t^k \partial_t^\ell \widehat{m_n}(t)\|_{L^2} \les 2^{(\ell-k+\frac 1 2) n},
\ee
and same for $\tilde m_n$, while for $\tilde m_n$ we further have $|t^{-1} \partial_t^\ell \widehat {\tilde {m_n}}| \les 2^{(\ell+2)n}$ for odd $\ell$.

We use the same split as in (\ref{t1t2}), $T=T_1+T_2$, but further decompose $T_1$ into the terms $T_{n, 1}$ corresponding to each dyadic piece $\tilde m_n$:
$$
T_1= \sum_{n \in \Z} \frac 1 {2\pi^2} \int_{\frac {|x-y|}2}^\infty (1-\chi({\textstyle \frac t{|x-y|}})) \partial_t \widehat {\tilde {m_n}}(t) S_t \dd t=:\sum_{n \in \Z} T_{n, 1}.
$$

The estimates
$$
|\widehat {m_n}(t)| \les 2^n,\ |t^s \widehat {m_n}(t)| \leq 2^{(1-s)n},\ |t^{-1} \partial_t^{-1} \widehat {\tilde {m_n}}(t)| \les 2^n,\ |t \partial_t^{-1} \widehat {\tilde {m_n}}(t)| \les 2^{-n}
$$
still hold. Since $s>1$, the first two estimates imply that $\widehat {\tilde m}(t) \les t^{-1}$, as in (\ref{sp1}). The last two again imply that $|\partial_t^{-1} \widehat {\tilde m}(t)| \les 1$, as in (\ref{sp2}). Hence $T_2$ can be evaluated in the same manner as above.

The fact that $\|\chi_{|x-y|>r} T_1(x, y)\|_{\B(L^1, \K)} \les r^{-1}$ admits an equally simple proof. Following (\ref{kbound}), we need to establish that under the weaker H\"{o}rmander condition (\ref{ch}), for any $t_0>0$,
$$
\int_{t_0}^\infty |\partial_t \widehat m(t)| \dd t \les t_0^{-1}.
$$
With no loss of generality, set $t_0=2^{n_0}$. From (\ref{newest}) one has in particular that
$$
|t^s \partial_t \widehat{m_n}(t)| \les 2^{(2-s)n},\ \|t^s \partial_t \widehat{m_n}(t)\|_{L^2} \les 2^{(3/2-s) n}.
$$
Take $s<2$. Since $s>1$, for $n < -n_0$ we use
$$
\int_{2^{n_0}}^\infty |\partial_t \widehat {m_n}(t)| \dd t \les 2^{(2-s)n} \int_{2^{n_0}}^\infty t^{-s} \dd t = 2^{(2-s)n + n_0(1-s)}.
$$
Summing all such terms with $n<-n_0$ yields a contribution of size $2^{-n_0}$.

For $n \geq -n_0$, since $s>3/2$, we use
$$
\int_{2^{n_0}}^\infty |\partial_t \widehat {m_n}(t)| \dd t \leq \|t^{-s}\|_{L^2(2^{n_0}, \infty)} \|t^s \partial_t \widehat{m_n}(t)\|_{L^2} \les 2^{n_0(1/2-s)+(3/2-s)n}.
$$
Upon summing in $n$ this again produces a contribution of size $2^{-n_0}$. Hence $|T_1| \in \B(\K)$.


To prove the cancellation property (\ref{cancel}) for $T_1$, we estimate each term $T_{n, 1}$ and its gradient. Suppose $|x-y| \in [2^{n_0}, 2^{n_0+1}]$. Then by Lemma~\ref{newversion}
$$\begin{aligned}
\|T_{n,1}(x, y)\|_{L^1_x({}^cB(y, 2^{n_0}))} &\les \|\sup_{\alpha \geq 2^{n_0}} (1-\chi({\textstyle \frac t \alpha})) t \partial_t \widehat {\tilde {m_n}}\|_{L^1} \les \|\chi_{\geq 2^{n_0-1}}(t) t \partial_t \widehat {\tilde {m_n}}\|_{L^1} \\
&\les \|t^s \partial_t \widehat {\tilde {m_n}}\|_{L^2} \|\chi_{\geq 2^{n_0-1}}(t) t^{1-s}\|_{L^2} \les 2^{(\frac 3 2-s) n} 2^{(\frac 3 2-s)n_0}.
\end{aligned}$$
Since $s>\frac 3 2$, we can sum in $n_0$ over the complement of a ball centered at $y$ of radius $r$ and obtain
\be\lb{compl}
\|T_{n, 1}(x, y)\|_{L^1_x({}^c B(y, r))} \les 2^{(\frac 3 2-s) n} r^{\frac 3 2-s}.
\ee


Likewise, by (\ref{eq:conicalbound2}) and (\ref{t1ct}), recalling that $\partial_t \chi({\textstyle t \over \alpha}) \les \one_{\geq \alpha/2}(t) t^{-1}$, we obtain
$$\begin{aligned}
\|\nabla_y T_{n, 1} H (-\Delta)^{-1}&(x, y)\|_{L^1_x(B(y, 2^{n_0+1})\setminus B(y, 2^{n_0}))} \les\\
&\les \|\sup_{\alpha \geq 2^{n_0}} t \partial_t [(1-\chi({\textstyle \frac t \alpha})) \partial_t \widehat {\tilde {m_n}}(t)]\|_{L^1} \\
&\les \|\one_{\geq 2^{n_0-1}}(t) t \partial_t^2 \widehat {\tilde {m_n}}(t)\|_{L^1} + \|\chi_{\geq 2^{n_0-1}}(t) \partial_t \widehat {\tilde {m_n}}(t)\|_{L^1} \\
&\les (\|t^s \partial_t^2 \widehat {\tilde {m_n}}\|_{L^2} + \|t^{s-1} \partial_t \widehat {\tilde {m_n}}(t)\|_{L^2}) \|\one_{\geq 2^{n_0-1}}(t) t^{1-s}\|_{L^2} \\
&\les 2^{(\frac 5 2 - s)n} 2^{(\frac 3 2-s) n_0}.
\end{aligned}$$


%

Again, summing over the complement of a ball we get that
\be\lb{cancel2}
\|\nabla_y T_{n, 1}H (-\Delta)^{-1}(x, y)\|_{L^1_x({}^cB(y, r))} \les 2^{(\frac 5 2 -s)n} r^{\frac 3 2 - s}.
\ee

Consider $f \in L^1$ and a threshold $\alpha>0$. We again perform a Calderon-Zygmund decomposition of $\tilde f = (I+V(-\Delta)^{-1})^{-1} f$, $\|\tilde f\|_{L^1} \les \|f\|_{L^1}$. Let $\tilde f = \tilde g + \tilde b$, where $\|\tilde g\|_{L^1} \leq \|\tilde f\|_{L^1}$, $\|\tilde g\|_{L^\infty} \leq \alpha$, and $b=\sum_{n \geq 1} b_n$ is such that
$$
\int_{\R^3} b_n = 0,\ \supp b_n \subset B_n:=B(y_n, r_n),\ \int_{\R^3} |b_n| \les \alpha \mu(B_n) \les \alpha r_n^3,\ \sum_{n \geq 1} \mu(B_n) \les \alpha^{-1} \|\tilde f\|_{L^1}.
$$

Then
\be\lb{dec}
T f= T (I + V(-\Delta)^{-1}) g + \sum_{n \geq 1} T (I + V (-\Delta)^{-1}) b_n.
\ee
For the first term in (\ref{dec}) we proceed as in the proof of Lemma \ref{lemma44} following (\ref{thusly}): since $g \in L^2$, $Tg \in L^2$, while $T V (-\Delta)^{-1} g$ is the sum of an $L^2$ term and an error term for which we iterate.

For the second term in (\ref{dec}), for each ``bad'' term $b_n$ we split $T$ into $T_{<-n_0}$ and $T_{\geq -n_0}$, where $n_0=\lfloor \log_2 r_n \rfloor$. Then $T_{<-n_0} H(-\Delta)^{-1}$ has the cancellation property (\ref{cancel2}), ensuring that
$$
\|T_{<-n_0} H(-\Delta)^{-1} b_n\|_{L^1({}^cB(y_n, 2r_n))} \les \|b_n\|_{L^1}.
$$

Likewise, $T_{\geq -n_0}$ has the property (\ref{compl}), so $\|T_{\geq -n_0} b_n\|_{L^1({}^cB(y_n, 2r_n))} \les \|b_n\|_{L^1}$ as well. We are left with $T_{\geq -n_0} V (-\Delta)^{-1} b_n$ from (\ref{dec}). To handle this we increase the outer radius, for each $n$, from $2r_n$ to $2Mr_n$, by some constant factor $M \geq 1$ depending only on $V$, not on $f$ or $n$. We use one extra power of decay gained by
$$
\||y-y_n| V (-\Delta)^{-1} f\|_{L^1} \les \|V\|_\K \||y-y_n|f\|_{L^1}
$$
when $\langle f, 1 \rangle = \int f = 0$, then choose $M$ such that the remainder term, sum over $n$ of
$$
\|\chi_{{}^cB(y_n, Mr_n)} V (-\Delta)^{-1} b_n\|_{L^1} \les (Mr_n)^{-1} \|V\|_\K r_n \|b_n\|_{L^1},
$$
is small in $L^1$ and we can handle it by iterating.

What happens within the radius $2M r_n$ is still irrelevant, since
$$
\mu(\bigcup_{n \geq 1} B(y_n, 2Mr_n)) \les \alpha^{-1} \|\tilde f\|_{L^1}.
$$

At the same time, due to (\ref{compl}), for $n_0=\lfloor \log_2 r_n \rfloor$
$$
\|T_{\geq -n_0} \chi_{B(y_n, M r_n)} V (-\Delta)^{-1} b_n\|_{L^1({}^cB(y_n, 2M r_n))} \les 2^{(\frac 3 2 -s) n_0} (Mr_n)^{\frac 3 2-s} \|b_n\|_{L^1} \les \|b_n\|_{L^1}.
$$
Thus the contributions of all terms are bounded except for the error terms, which we handle by iterating, as in (\ref{iter}).
\end{proof}

We next prove the Hardy space boundedness of a class of Mihlin multipliers. It is convenient to assume that $V \in \K_0 \cap L^{3/2, \infty}$ and use Mihlin's stronger condition (\ref{cm}) instead of (\ref{ch}). While probably not strictly necessary, these stronger assumptions simplify the proof substantially.

\begin{lemma} If $V \in \K_0 \cap L^{3/2, \infty}$ and $m$ satisfies Mihlin's condition (\ref{cm}), $\tilde T=m(\sqrt H) H (-\Delta)^{-1}$ is bounded from the Hardy space $\H$ to $L^1$.
\end{lemma}
\begin{proof} The proof follows that of Theorem 6.3a in \cite[p. 178]{stein}, except that we replace $L^2$ by $L^{3/2, \infty}$.

We use the atomic decomposition of the Hardy space $\H$, following \cite[Theorem 2, p.~107]{stein}: Any function $f \in \H$ can be written as a norm-converging linear combination of $\H$ atoms, $f=\sum_n \lambda_n f_n$, with
$$
\sum_n |\lambda_n| \les \|f\|_{\H}.
$$
Without loss of generality we can then assume $f$ itself is an $\H$ atom: there exists a ball $B=B(y_0, r)$ such that $\supp f \subset B$, $|f| \leq \mu(B)^{-1}$, and $\int_{\R^3} f = 0$.

For $x \in {}^cB(y_0, cr)$, the cancellation properties (\ref{cancel}) of $\tilde T$ and of $f$ itself ensure that
$$\begin{aligned}
& \|\tilde T f\|_{L^1(\R^3 \setminus B(y_0, cr))} = \int_{\R^3 \setminus B(y_0, cr)} \bigg|\int_{\R^3} \tilde T(x, y) f(y) \dd y\bigg| \dd x \\
& \leq \int_{\R^3} \bigg(\int_{\R^3 \setminus B(y_0, cr)} |\tilde T(x, y) - \tilde T(x, y_0)| \dd x\bigg) |f(y)| \dd y \les \|f\|_{L^1} \leq 1.
\end{aligned}$$

For $x \in B(y_0, cr)$, we use that by real interpolation, see \cite{bergh}, $T \in \B(L^{3/2, \infty})$ and $\tilde T \in \B(\K, L^{3/2, \infty})$:
$$
\|\tilde T f\|_{L^1(B(y_0, cr))} \les cr \|\tilde T f\|_{L^{3/2, \infty}} \les cr \|H(-\Delta)^{-1} f\|_{L^{3/2, \infty}} \les cr \|f\|_\K \les r \|f\|_{L^{3/2, 1}} \les 1,
$$
considering that $\|f\|_{L^1} \leq 1$, $\|f\|_{L^\infty} \les r^{-3}$, so by real interpolation, $\|f\|_{L^{3/2, 1}} \les r^{-1}$.
\end{proof}

We can also bound the integral kernel of $m(\sqrt H)$. However, this weak-type bound does not directly imply any further operator bounds.
\begin{lemma} $T=m(\sqrt H) \in L^\infty_y L^{1, \infty}_x$. Furthermore, for any $y \in \R^3$ and $r>0$ consider a set $S$ of measure $r^3$, $S \subset {}^cB(y, r)$. Then
\be\lb{est_noua}
\int_S |T(x, y)| \dd x \les 1.
\ee
\end{lemma}

\begin{proof}
We use the same decomposition (\ref{t1t2}) into $T_1$ and $T_2$. Regarding $T_1$, recall the estimate (\ref{compl})
$$
\|T_{n, 1}(x, y)\|_{L^1_x({}^c B(y, r))} \les 2^{(\frac 3 2-s) n} r^{\frac 3 2-s},
$$
and we also have the $L^\infty$ bound
$$
|T_{n,1}(x, y)| \les \sup_{t \geq \frac {|x-y|} 2} t^{-1} \partial_t \widehat {\tilde {m_n}}(t) \int_0^\infty t S_t(x, y) \dd t \les 2^{3n}.
$$
We want to prove that uniformly in $y$ and for any $\alpha>0$
$$
\mu(\{x \mid |T_1(x, y)| > \alpha\}) \les \alpha^{-1}.
$$
Since $\mu(B(y, \alpha^{-1/3})) \les \alpha^{-1}$, it suffices to consider $T_1$ in the complement of this ball of radius $r=\alpha^{-1/3}$. For $n < n_0$
$$
|T_{<n_0, 1}(x, y)| \leq \sum_{n < n_0} |T_{n, 1}(x, y)| \les 2^{3n_0},
$$
so by taking $n_0=\lfloor \frac {(\log_2 \alpha) -1} 3 \rfloor$, such that $2^{3n_0}<\alpha/2$, we obtain an admissible quantity and $2^{n_0} \sim \alpha^{1/3} = r^{-1}$. Finally,
$$
\|T_{\geq n_0, 1}(x, y)\|_{L^1_x({}^c B(y, r))} \leq \sum_{n \geq n_0} \|T_{n, 1}(x, y)\|_{L^1_x({}^c B(y, r))} \les 2^{(\frac 3 2-s) n_0} r^{\frac 3 2-s} \les 1,
$$
so by Chebyshev's inequality
$$
\mu(\{x \in {}^c B(y, r)\mid |T_{\geq n_0, 1}(x, y)| > \alpha\}) \leq \frac {2\|T_{\geq n_0, 1}(x, y)\|_{L^1_x({}^c B(y, r))}} \alpha \les \alpha^{-1}.
$$
It follows that $T_1(x, y) \in L^{1, \infty}_x$, uniformly in $y$. Regarding $T_2$, one can prove as above that
$$
|T_2(x, y)| \les \sum_{j=1}^J \cosh({\textstyle\frac 3 4} |x-y| \mu_j) |f_j(x)| |f_j(y)| \les \langle x \rangle^{-3} \langle y \rangle^{-3} \sum_{j=1}^J \langle \mu_j^{-4} \rangle \les |x-y|^{-3} \in L^{1, \infty}_x
$$
uniformly in $y$. Since $L^{1, \infty}$ is a quasinormed space, it follows that $T=T_1+T_2 \in L^\infty_y L^{1, \infty}_x$.

The more precise estimate (\ref{est_noua}) uses the same decomposition. With $n_0$ as above, $T_{<n_0, 1}$ is of size $r^{-3}$, so its integral on a set of size $r^3$ will have size $1$. On the other hand, the integral of $T_{\geq n_0, 1}$ in ${}^c B(y, r)$ is also of size $1$.
\end{proof}

As shown in \cite{becgol3}, if $m$ satisfies~\eqref{ch} with the stronger assumption $s > 2$, then $|T(x, y)| \les |x-y|^{-3}$, which is stronger than the above statement.

This concludes the proof of Theorem \ref{mainthm}.
\end{proof}

\section{The twisted Hardy space}
\begin{proof}[Proof of Proposition \ref{twist}] Theorem \ref{mainthm} implies that
$$
\|H^{1+i\sigma} (-\Delta)^{-1}\|_{\B(\H, L^1)} \les C_2=\sup_{\lambda>0} \max(|\lambda^{i\sigma}|, \lambda |(\lambda^{i\sigma})'|, \lambda^2 |(\lambda^{i\sigma})''|) \les \langle \sigma \rangle^2,
$$
The first conclusion $\tilde \H \subset L^1$ follows immediately from the definition of $\tilde \H$.

Next,
$$
\langle H^{1+i\sigma}P_c (-\Delta)^{-1} f, w \rangle = \langle H^{i\sigma}P_c (-\Delta)^{-1} f, Hw \rangle = 0.
$$

The operators $H^{i\sigma}$ are bounded on $\tilde \H$ because they act by translation on $f_\sigma$ in Definition \ref{dhardy}. In turn, translation is a (non-uniformly) bounded operation on $\langle \sigma \rangle^{-s} \M$ and we obtain a bound of
$$
\|H^{i\sigma}\|_{\B(\tilde \H)} \les \langle \sigma \rangle^2.
$$

More general operators can be bounded by representing $m(H)$ as an integrable combination of $H^{i\sigma}$, $\sigma \in \R$. Indeed, a computation
$$
m(t)=\int_\R t^{i\sigma} g(\sigma) \dd \sigma = \int_\R e^{i\sigma\log t} g(\sigma) \dd \sigma
$$
shows that such a representation always exists, with
$$
g(\sigma)=2\pi \mc F^{-1} [m(e^t)](\sigma).
$$
If $m \in C^\infty_c((0, \infty))$, then $g$ satisfies the required integrability condition. Milder decay conditions at $0$ and $\infty$ suffice, but observe that, for example, $\tilde P_{\leq n}$ and $\tilde P_{\geq n}$ are not included.
\end{proof}

\begin{proof}[Proof of Theorem \ref{hor}] By Theorem \ref{mainthm}, for $V \in \K_0 \cap L^{3/2, \infty}$ and $m$ satisfying Mihlin's condition (\ref{cm}), $m(H)HP_c(-\Delta)^{-1}$ is bounded from $\H$ to $L^1$.

We need to obtain the same for $m(H)H^{1+i\sigma}P_c(-\Delta)^{-1}$, plus control on the operator norm growth rate. But observe that the latter expression has the same form as the initial one, with $m(H)$ replaced by $m(H)H^{i\sigma}$. If $m$ fulfills Mihlin's condition (\ref{cm}), so does $m(\lambda)\lambda^{i\sigma}$, with a norm of size $\langle \sigma \rangle^2$.  This polynomial growth rate suffices in light of Definition \ref{dhardy}.
\end{proof}

\begin{proof}[Proof of Lemma \ref{inter}] Let $\tilde L^p = [\tilde \H, L^2]_\theta$, where $\frac {1-\theta} 1 + \frac \theta 2 = \frac 1 p$. Since $\tilde \H \subset L^1$, clearly $\tilde L^p \subset L^p$. We also need to prove the opposite continuous inclusion.

Then $T_{1+i\sigma}=H^{1+i\sigma} (-\Delta)^{-1-i\sigma} = \H^{1+i\sigma} (-\Delta)^{-1} (-\Delta)^{-i\sigma}$ is a family of bounded maps from $\H$ to $\tilde \H$ (since $(-\Delta)^{i\sigma}$ is bounded on $\H$), with a norm of polynomial growth, while $T_{-1/2+i\sigma}=H^{-1/2+i\sigma} (-\Delta)^{1/2-i\sigma}$ are bounded on $L^2$. Hence, by interpolation, the identity map is bounded from $L^{3/2}$ to $\tilde L^{3/2}$.

This proves that $L^{3/2} \subset \tilde L^{3/2}$, so $\tilde L^{3/2}=L^{3/2}$ and in general $\tilde L^p=L^p$ for $3/2\leq p \leq 2$.
\end{proof}

\section{Intertwining operators}
We include the proof of the following result from \cite{goldberg}.
\begin{lemma}\lb{hilbert} Assume that $V \in \K_0$ and $H=-\Delta+V$ has no zero energy eigenstate or resonance. Then the operators $H^sP_c(-\Delta)^{-s}$ and $(-\Delta)^s H^{-s}P_c$ are bounded on $L^2$ for $|s| \leq \frac 1 2$.
\end{lemma}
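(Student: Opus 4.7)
The plan is to obtain Lemma~\ref{hilbert} by Stein complex interpolation applied to the analytic operator families $T_z = H^z P_c(-\Delta)^{-z}$ and $S_z = (-\Delta)^z H^{-z} P_c$ in the strip $0 \le \Re z \le \tfrac{1}{2}$. Under the standing assumptions, $H$ restricted to $P_c L^2$ is a nonnegative self-adjoint operator (its point spectrum is negative and the continuous spectrum is contained in $[0,\infty)$), so $H^{i\tau} P_c$ is an isometry on $P_c L^2$ and $(-\Delta)^{i\tau}$ is a unitary on $L^2$. Hence the line $\Re z = 0$ gives the uniform bound $\|T_{i\tau}\|_{L^2 \to L^2}, \|S_{i\tau}\|_{L^2 \to L^2} \le 1$. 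Peeling off the unitary factors $H^{i\tau}$ and $(-\Delta)^{\pm i\tau}$ from $T_{1/2+i\tau} = H^{i\tau} T_{1/2} (-\Delta)^{-i\tau}$ and $S_{1/2+i\tau} = (-\Delta)^{i\tau} S_{1/2} H^{-i\tau}$ reduces the line $\Re z = \tfrac12$ to the $L^2$ boundedness of the two operators $T_{1/2} = H^{1/2} P_c (-\Delta)^{-1/2}$ and $S_{1/2} = (-\Delta)^{1/2} H^{-1/2} P_c$.

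For $T_{1/2}$, setting $g = (-\Delta)^{-1/2} f$ I would compute
\[
\|T_{1/2} f\|_{L^2}^2 = \langle H P_c g, P_c g\rangle = \|\nabla P_c g\|_{L^2}^2 + \langle V P_c g, P_c g\rangle.
\]
Since $P_c = I - P_p$ with $P_p$ a finite-rank projection onto the smooth, exponentially decaying bound states $f_n \in L^1 \cap L^2 \cap \dot H^{-1}$, the projection $P_c$ is bounded on $\dot H^1$. Combined with the Kato form estimate $|\langle Vu,u\rangle| \les \|V\|_\K \|\nabla u\|_{L^2}^2$ quoted in Section~3 of the excerpt, this gives $\|T_{1/2} f\|_{L^2} \les \|\nabla g\|_{L^2} = \|f\|_{L^2}$, and this direction is essentially routine.

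The main obstacle is $S_{1/2}$: setting $v = H^{-1/2} P_c f$ and using $\|H^{1/2} v\|_{L^2}^2 = \langle H v, v\rangle = \|P_c f\|_{L^2}^2$, its boundedness is equivalent to the coercivity estimate
\[
\|\nabla v\|_{L^2}^2 \les \langle H v, v\rangle, \qquad v \in P_c \dot H^1.
\]
I would establish this by a Fredholm-alternative argument. Suppose it fails: then there exist $v_n \in P_c \dot H^1$ with $\|\nabla v_n\|_{L^2} = 1$ but $\langle H v_n, v_n\rangle \to 0$. Pass to a $\dot H^1$-weakly convergent subsequence $v_n \rightharpoonup v$; weak continuity of $P_p$ on $\dot H^1$ (via the pairings $g \mapsto \langle g, f_j\rangle$ with $f_j \in \dot H^{-1}$) gives $v \in P_c \dot H^1$. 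Split $V = V_1 + V_2$ with $V_1$ compactly supported and $\|V_2\|_\K$ arbitrarily small. The compactly supported piece $V_1$ induces a compact map $\dot H^1 \to \dot H^{-1}$ via Rellich compactness on the support of $V_1$, so $\langle V_1 v_n, v_n\rangle \to \langle V_1 v, v\rangle$, while the $V_2$ contribution is controlled by $\|V_2\|_\K$ uniformly. Together with the lower semicontinuity of $\|\nabla \cdot\|_{L^2}^2$ and the nonnegativity of $H$ on $P_c L^2$, one deduces that $v \neq 0$ and $\langle H v, v\rangle = 0$, so $v$ is a nontrivial zero-energy eigenstate (if $v \in L^2$) or resonance (if $v \in \dot H^1 \setminus L^2$), contradicting the hypothesis.

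Once both endpoints are in hand, Stein interpolation delivers the boundedness of $T_s$ and $S_s$ on $L^2$ for $0 \le s \le \tfrac{1}{2}$. To cover $-\tfrac{1}{2} \le s < 0$ I would note that $P_c$, $H^s$, and $(-\Delta)^s$ are self-adjoint and that $P_c$ commutes with $H^s$, so $T_s^* = (-\Delta)^{-s} H^s P_c = S_{-s}$ and analogously $S_s^* = T_{-s}$. The negative-$s$ case of each operator is therefore the adjoint of the positive-$s$ case of the other, and the proof is complete.
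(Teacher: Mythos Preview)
Your argument is correct and parallels the paper's closely. The paper also reduces to $s=\tfrac12$ and then, via a $TT^*$ argument, recasts the two endpoint bounds as the operator statements $H \in \B(\dot H^1, \dot H^{-1})$ (equivalently $V \in \B(\dot H^1, \dot H^{-1})$, which is your Kato form bound) and $H^{-1} P_c \in \B(\dot H^{-1}, \dot H^1)$; the latter it obtains by Fredholm's alternative for $I + (-\Delta)^{-1}V$ on $\dot H^1$, using that $(-\Delta)^{-1}V$ is compact there when $V\in\K_0$. Your variational extraction of a minimizing sequence is an equivalent route to the same contradiction, but note that it tacitly relies on the form $\langle H\cdot,\cdot\rangle$ remaining nonnegative on all of $P_c\dot H^1$, not merely on $P_c H^1$ where spectral theory gives it directly; this extension is true (approximate $v$ by cutoffs $\chi_R v$, correct back into $P_c$, and pass to the limit using $V\in\B(\dot H^1,\dot H^{-1})$), but it is a step the paper's Fredholm formulation sidesteps entirely.
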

\begin{proof} It suffices to prove that $H^{1/2} P_c (-\Delta)^{-1/2}$ and $H^{-1/2} P_c (-\Delta)^{1/2}$ are bounded on $L^2$. By a $T T^*$ argument, this reduces to showing that $H \in \B(\dot H^1, \dot H^{-1})$ and $H^{-1} \in \B(\dot H^{-1}, \dot H^1)$.

The first goal further reduces to showing that $V \in \B(\dot H^1, \dot H^{-1})$. Indeed, note that $|V|^{i\sigma} \in \B(L^\infty)$, while $|V|^{1+i\sigma} \in \B((-\Delta)^{-1}L^1, L^1)$. By complex interpolation it follows that $|V|^{1/2} \in \B(\dot H^1, L^2)$ and, by duality, $|V|^{1/2} \in \B(L^2, \dot H^{-1})$ as well. Hence
\be\lb{l2}
|V| \in \B(\dot H^1, \dot H^{-1}).
\ee

The second goal is equivalent to asking that $H^{-1}(-\Delta) = (I+(-\Delta)^{-1}V)^{-1} \in \B(\dot H^1)$. Clearly $(-\Delta)^{-1} V \in \B(\dot H^1)$; furthermore, since $V \in \K_0$, this operator is compact. Then by Fredholm's alternative $I+(-\Delta)^{-1}V$ can be inverted unless the equation
$$
g+V(-\Delta)^{-1} g =0
$$
has a nonzero solution $g \in \dot H^{-1}$. Letting $f=(-\Delta)^{-1} g$, we obtain that $f+(-\Delta)^{-1} V f = 0$, $f \in \dot H^1$. But then by the usual bootstrapping we get that $Vf \in L^1 \cap \K$ and $f \in (-\Delta)^{-1} L^1 \cap L^\infty$ must be a zero energy eigenfunction or resonance of $H$, whose existence is precluded by our hypotheses. Hence $(I+(-\Delta)^{-1}V)^{-1}$ is bounded on $\dot H^1$ as claimed.
\end{proof}

\begin{proof}[Proof of Theorem \ref{opt}] Here we do not assume that $V \in L^{3/2, \infty}$, so we cannot use results about the twisted Hardy space $\tilde \H$.
	
Clearly
$$
H^{1/2+i\sigma} (-\Delta)^{-1/2-i\sigma} \in \B(L^2).
$$

Furthermore, $H^{i\sigma}, (-\Delta)^{-i\sigma} \in \B(L^p)$ implies that
$$
H^{i\sigma} (-\Delta)^{-i\sigma} \in \B(L^p)
$$
and $H^{i\sigma} \in \B(L^1, L^{1, \infty})$ implies that
$$
H^{i\sigma} (-\Delta)^{-i\sigma} \in \B(\H, L^{1, \infty}).
$$
Fixing some $s>3/2$ in Theorem \ref{mainthm}, all operator norms have polynomial growth in $\sigma$.

By complex interpolation, for $0<s<1$ and $\frac 1 {1-s}<p< \frac 1 s$, $H^s(-\Delta)^{-s} \in \B(L^p)$.

For the reversed expression, we cannot obtain the full range of bounds, due to the limitations of Lemma \ref{inter}. However, by complex interpolation between $(-\Delta)^{i\sigma}H^{-i\sigma} \in \B(L^p)$, $1<p<\infty$, and $(-\Delta)^{1/2+i\sigma} H^{-1/2-i\sigma} \in \B(L^2)$ due to Lemma \ref{hilbert}, we obtain that $H^s (-\Delta)^{-s} \in \B(L^p)$ for $\frac 1 {1-s} < p < \frac 1 s$.


All the other claimed bounds follow by duality.
\end{proof}

\begin{proof}[Proof of Lemma \ref{aux}] Recall that
$$
T:=m(\sqrt{H}) = (\pi i)^{-1}\int_{-\infty}^\infty \partial_t \widehat {\tilde m}(t) S_t \dd t,
$$
so by Lemma \ref{newversion}
$$
\|T\|_{\B(L^1)}=\esssup_y \int_{\R^3} |T(x, y)| \dd x \les \int_0^\infty t \partial_t \widehat {\tilde m}(t) \dd t < \infty.
$$
The same is true, symmetrically, for the $\B(L^\infty)$ norm, and for all the others by interpolation.

More smoothness for $m$ implies more decay for the kernel of $m(\sqrt H)$. Due to (\ref{fund}) or (\ref{funda}), if there are no bound states
$$
\int_{-\infty}^\infty \partial_t \widehat {\tilde m}(t) S_t(x, y) \dd t \les \int_{|x-y|}^\infty |\widehat {\tilde m}(t)| |S_t(x, y)| \dd t \les \frac {\sup_{t \geq |x-y|} |\partial_t \widehat {\tilde m}(t)|}{|x-y|} \les \frac {R^2 \|m^{(n)}\|_{L^1}} {|x-y|^{n+1}},
$$
assuming that $\supp m \subset [0, R]$, $m^{(n)} \in L^1$, and all odd derivatives of $m$ of order up to $n$ vanish at $0$.

If there are bound states, we use a cutoff of $\frac {|x-y|} 2$ instead and need to separately bound the contribution of the interval $t \in [0, |x-y|]$, where we use (\ref{outside}). For the contribution of each bound state, we obtain a bound of
$$
\int_0^{\frac {|x-y|} 2} \partial_t \widehat {\tilde m}(t) \frac {\sinh(\sqrt{-\lambda_k} t)}{\sqrt{-\lambda_k}} \dd t \les \sup_t |\partial_t \widehat {\tilde m}(t)| \frac {\sinh(\sqrt{-\lambda_k} |x-y|/2)}{\sqrt{-\lambda_k}}.
$$
Including the omitted factor of
$$
f_k(x) \otimes \ov{f_k(y)} \les \frac {e^{-\sqrt{-\lambda_k}(|x|+|y|)}}{\langle x \rangle \langle y \rangle},
$$
we obtain arbitrary decay here as well. The cutoff can be further adjusted to preserve scaling.
\end{proof}

\begin{proof}[Proof of Proposition \ref{paleywiener}] We start by proving that the twisted Paley--Wiener projections and similar operators are bounded from $L^1$ to $L^\infty$. Denote
$$
S^1_t := \int_0^t C_t \dd t = \frac {\sin(t \sqrt H) P_c}{H^{3/2}}.
$$
In the free case, $S_{0t}^1$ has the explicit integral kernel
$$
S_{0t}^1(x, y) = \left\{
\begin{aligned}
&\frac {\sgn t} {4\pi},& |x-y| \leq |t|\\
&\frac 1 {4\pi} \frac t {|x-y|},& |t| \leq |x-y|,
\end{aligned}\right.
$$
hence is uniformly bounded in $x$, $y$, and $t$ by $\frac 1 {4\pi}$. From the \cite{becgol2} estimate
$$
\bigg\|\frac {\sin(t \sqrt H) P_c}{\sqrt H} f\bigg\|_{L^\infty_{t, x}} \les \|\Delta f\|_{L^1}
$$
we obtain a similar uniform bound in the general case.

For a multiplier $m$, starting from the formula
$$
T:=m(\sqrt{H})P_c = \frac 1 {2\pi^2} \int_0^\infty \partial_t \widehat {\tilde m}(t) S_t \dd t,
$$
we again split the integral in two, $T=T_1+T_2$, using a smooth cutoff at $t \sim |x-y|$, as in the proof of Theorem \ref{mainthm}. Integrating by parts twice in the high frequency part $T_1$, as in (\ref{t1ct}), leads to
$$
T_1(x, y)= \frac 1 {2\pi^2} \int_{\frac {|x-y|}2}^\infty \partial_t^2 [(1-\chi({\textstyle \frac t{|x-y|}})) \partial_t \widehat {\tilde m}(t)] S_t^1(x, y) \dd t.
$$
Since $\chi'$ and $\chi''$ are supported on compact intervals and the latter integrates to $0$,
$$\begin{aligned}
\|\partial_t^2 [(1-\chi({\textstyle \frac t \alpha})) \partial_t \widehat {\tilde m}(t)]\|_{L^1} &\les \|\partial_t^2 \widehat {\tilde m}\|_{L^1} + \|\alpha^{-1} \chi'({\textstyle \frac t \alpha}) \partial_t \widehat {\tilde m}\|_{L^1} + \|\alpha^{-2} \chi''({\textstyle \frac t \alpha}) \widehat {\tilde m}\|_{L^1} \\
&\les \|\partial_t^2 \widehat {\tilde m}\|_{L^1} + \|\partial_t \widehat {\tilde m}\|_{L^\infty} \les \|\partial_t^2 \widehat {\tilde m}\|_{L^1}.
\end{aligned}$$
Hence
$$
\|T_1\|_{\B(L^1, L^\infty)} \les \|\partial_t^2 \widehat {\tilde m}\|_{L^1}.
$$

For the low frequency component $T_2$,
$$
T_2=\frac 1 {2\pi^2} \int_0^{\frac {3|x-y|} 4} \chi({\textstyle \frac t{|x-y|}}) \partial_t \widehat {\tilde m}(t) S_t \dd t,
$$
we do not need to integrate by parts, because in this region, outside the light cone, $S_t$ has good decay and smoothness properties, as per (\ref{outside}). Integrating on this interval of length $|x-y|$, we obtain a bound of
$$
T_2 \les \|\partial_t \widehat {\tilde m}\|_{L^\infty} |x-y| \sum_{n=1}^N \frac {\sinh(3\sqrt{-\lambda_n}|x-y|/4)}{\sqrt{-\lambda_n}} f_n((x) \ov f_n(y) \les \|\partial_t \widehat {\tilde m}\|_{L^\infty} \les \|\partial_t^2 \widehat {\tilde m}\|_{L^1}.
$$

Assuming, for example, that $m \in C^\infty_c([0, \infty))$ (a milder condition, such as $\langle \lambda \rangle^s m(\lambda) \in L^2$, $s>\frac 5 2$, would suffice), this norm is finite and it rescales like $\alpha^2$ when replacing $m$ by $m(\alpha^{-1} \lambda)$:
$$
\partial_t^2 \mc F [{\tilde m}(\alpha^{-1} \lambda)](t) = \alpha^3 \partial_t^2 \widehat {\tilde m}(\alpha t).
$$

This proves the $L^1$ to $L^\infty$ bound, which by real interpolation with Lemma \ref{aux} produces $L^1$ to $L^p$ bounds, $1 \leq p \leq \infty$, and then all the others by complex interpolation (for a fixed operator) and duality.

Finally, this Fourier-side norm is translation-invariant, so for example
$$
\|e^{i\tau \sqrt H} P_n\|_{\B(L^1, L^\infty)} \les \|2^{3n} \partial_t^2 \widehat {\tilde \phi}(2^n t)\|_{L^1} \les 2^{2n}.
$$
In other words, the norm is bounded independently of $\tau$.
\end{proof}

\section{The wave equation} \lb{sec:Strichartz}


Recalling the discussion from \cite{becgol3}, for the free wave equation in $\R^{d+1}$
$$
u_{tt}-\Delta u = F,\ u(0)=u_0,\ u_t(0)=u_1,
$$
as proved by \cite{keeltao}, the following Strichartz estimates hold:
\be\lb{strichartz_est}
\|u\|_{L^\infty_t \dot H^s_x \cap L^p_t L^q_x} + \|u_t\|_{L^\infty_t \dot H^{s-1}_x} \les \|u_0\|_{\dot H^s} + \|u_1\|_{\dot H^{s-1}} + \|F\|_{L^{\tilde p'}_t L^{\tilde q'}_x},
\ee
where by scaling and translation invariance
\be\lb{condition1}
\frac 1 p + \frac d q = \frac d 2 - s = \frac 1 {\tilde p'} + \frac d {\tilde q'} - 2,\ 2 \leq p, q, \tilde p, \tilde q \leq \infty,
\ee
and the exponents must be wave-admissible:
\be\lb{condition2}
\frac 2 p + \frac {d-1} q \leq \frac {d-1} 2,\ \frac 2 {\tilde p} + \frac {d-1} {\tilde q} \leq \frac {d-1} 2.
\ee
The endpoint cases $(p, q) = (2, \infty)$ in $\R^{3+1}$ and $(p, q) = (4, \infty)$ in $\R^{2+1}$ are not true (and same for the inhomogeneous term, i.e.\ $(\tilde p, \tilde q) = (2, \infty)$ in $\R^3$).

In three dimensions, for wave-admissible exponents $(p, q)$, the pairs $(\frac 1 p, \frac 1 q)$ cover the triangle with vertices $(0, 0)$, $(0, \frac 1 2)$, and $(\frac 1 2, 0)$.

Thus, in $\R^{3+1}$, the homogeneous Strichartz estimates (i.e.\;$F=0$)
\be\lb{hom_str}
\|u\|_{L^p_t L^q_x} \les \|u_0\|_{\dot H^s} + \|u_1\|_{\dot H^{s-1}}
\ee
follow by interpolation between the segment $(\infty, \frac 6 {3-2s})$, $0 \leq s < 3/2$ --- trivial by $\dot H^s$ norm conservation --- and the sharp admissible segment $\frac 2 p + \frac {d-1} q = \frac {d-1} 2$, i.e.
\be\lb{sharp_str}
\frac 1 p + \frac 1 q = \frac 1 2.
\ee

We now proceed with the proof of Lemma \ref{lplemma}, which plays a crucial part in the proof of Strichartz estimates. For comparison and later use, we include the following auxiliary result from \cite{becgol3}:

\begin{lemma}\lb{lm} For $s_1, s_2 \geq 0$, $0 \leq s = s_1 + s_2 < 1$,
\be\lb{Delta}
\|(-\Delta)^{-s_1} \cos(t \sqrt H) P_c (-\Delta)^{-s_2} f\|_{L^{\frac 2 {1-s}}} \les |t|^{-s} \|f\|_{L^{\frac 2 {1+s}}}.
\ee
\end{lemma}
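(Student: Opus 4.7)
The plan is to reduce Lemma \ref{lm} to its ``pure $H$'' counterpart, Lemma \ref{lplemma}, by factoring out intertwining operators that convert between powers of $H$ and powers of $-\Delta$. On $P_cL^2$, inserting $H^{s_j}H^{-s_j}=I$ and using the fact that $P_c$ commutes with any Borel function of $H$ gives the operator identity
\[
(-\Delta)^{-s_1}\cos(t\sqrt H)P_c(-\Delta)^{-s_2}
= \bigl[(-\Delta)^{-s_1}H^{s_1}P_c\bigr]\cdot\bigl[\cos(t\sqrt H)H^{-s}P_c\bigr]\cdot\bigl[P_c H^{s_2}(-\Delta)^{-s_2}\bigr],
\]
where $s=s_1+s_2$. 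The middle factor is exactly the object controlled by Lemma \ref{lplemma}: with $p=2/(1+s)$ one has $s=2/p-1$ and $p'=2/(1-s)$, so $\|\cos(t\sqrt H)H^{-s}P_c f\|_{L^{2/(1-s)}}\les|t|^{-s}\|f\|_{L^{2/(1+s)}}$. It therefore suffices to bound the two outer intertwining operators on $L^{2/(1-s)}$ and $L^{2/(1+s)}$, respectively.

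Theorem \ref{opt}(2) gives $(-\Delta)^{-s_1}H^{s_1}P_c\in\B(L^p)$ for $p>1/s_1$, and its adjoint yields $P_c H^{s_2}(-\Delta)^{-s_2}\in\B(L^q)$ for $q<1/(1-s_2)$. A short arithmetic check shows that $2/(1-s)\in(1/s_1,\infty)$ and $2/(1+s)\in(1,1/(1-s_2))$ precisely when $3s_1+s_2>1$ and $s_1+3s_2>1$, which is automatic for $s>1/2$. For the complementary range $0\leq s\leq 1/2$ one falls back on the sharper intertwining bounds of Lemma \ref{intertwine} (which, at the cost of the stronger hypothesis $V\in L^{3/2,\infty}$, cover $1<p<3/(2s)$ and $3/(3-2s)<p<\infty$); these close the gap and deliver the full range $0\leq s<1$.

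An alternative route, avoiding the extra hypothesis on $V$, is to run complex interpolation on the analytic family
\[
T_z := e^{(z-s)^2}\,(-\Delta)^{-zs_1/s}\cos(t\sqrt H)P_c(-\Delta)^{-zs_2/s},\qquad 0\leq\re z\leq 1,
\]
which reduces to the desired operator at $z=s$ (up to a harmless constant). At $\re z=0$ the imaginary powers are $L^2$-isometries and $\cos(t\sqrt H)P_c$ is an $L^2$-contraction, so $T_{i\sigma}$ is uniformly $L^2$-bounded with admissible Gaussian decay in $|\sigma|$. At $\re z=1$ one factors $T_{1+i\sigma}$ through the modified cosine kernel $\cos(t\sqrt H)P_c H^{-1}$ using the same intertwining identity as above; Lemma \ref{lemma21} supplies the crucial bound $\|\cos(t\sqrt H)P_cH^{-1}\|_{L^1\to L^\infty}\les|t|^{-1}$. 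Stein's complex interpolation theorem then produces, at $z=s$, exactly the claimed bound $L^{2/(1+s)}\to L^{2/(1-s)}$ with constant $\les|t|^{-s}$.

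The main obstacle is the endpoint at $\re z=1$: one must establish $L^\infty$-boundedness of $(-\Delta)^{-s_1/s}H^{s_1/s}P_c$ and $L^1$-boundedness of $P_cH^{s_2/s}(-\Delta)^{-s_2/s}$ (with polynomial growth in $|\sigma|$ from the attendant imaginary-power factors), and these lie precisely at the excluded endpoints of Theorem \ref{opt}. The polynomial growth in $\sigma$ comes essentially for free, since the accompanying Mihlin multipliers $\lambda^{i\sigma}$ have $H^s$ norm $\les\langle\sigma\rangle^s$ by Theorem \ref{mainthm}; the harder point is the unweighted $L^1$/$L^\infty$ endpoint, which either needs the extra input of Lemma \ref{intertwine} or a bootstrap based on the identity $H(-\Delta)^{-1}=I+V(-\Delta)^{-1}\in\B(L^1)$ when $V\in\K_0$, exactly as used repeatedly throughout Section 4.
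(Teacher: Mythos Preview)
The paper does not contain a proof of Lemma \ref{lm}; it is quoted verbatim from \cite{becgol3} with the phrase ``we include the following auxiliary result from \cite{becgol3}.'' So there is no in-paper argument to compare against, and I comment only on the correctness of your two approaches.

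Your first approach has a genuine gap. For the right-hand factor you should invoke Theorem \ref{opt}(1) directly: $H^{s_2}P_c(-\Delta)^{-s_2}\in\B(L^p)$ for all $1<p<1/s_2$, which always covers $p=2/(1+s)$ since $2s_2<1+s$ is automatic. Using the dual of part (2) instead, as you do, gives the weaker range $p<1/(1-s_2)$ and is the source of your spurious constraint $s_1+3s_2>1$. The binding constraint is the left factor: Theorem \ref{opt}(2) requires $2/(1-s)>1/s_1$, i.e.\ $3s_1+s_2>1$, and this is \emph{not} automatic for $s>1/2$---take $s_1=0$, $s_2=0.9$. Your fallback to Lemma \ref{intertwine} imports the hypothesis $V\in L^{3/2,\infty}$, which is not part of the standing assumptions here, so the argument does not establish the lemma as stated.

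Your second approach (Stein interpolation with endpoints $L^2\to L^2$ and $L^1\to L^\infty$) is the natural route and is the one actually taken in \cite{becgol3}, but you correctly isolate and then leave open exactly the step that carries the content: the $L^1\to L^\infty$ bound on the line $\re z=1$. Factoring through $H^{-1}$ and intertwining operators cannot close this, because those operators lie precisely at the excluded endpoints of Theorem \ref{opt}, and moreover the imaginary powers $(-\Delta)^{i\tau}$ are not bounded on $L^1$ or $L^\infty$. The way forward is to abandon the intertwining factorisation at $\re z=1$ and prove the kernel bound for $(-\Delta)^{-\alpha-i\tau_1}\cos(t\sqrt H)P_c(-\Delta)^{-\beta-i\tau_2}$ directly: the kernel of $(-\Delta)^{-\alpha-i\tau}$ has modulus $\les\langle\tau\rangle^{N}|x|^{2\alpha-3}$, and one combines this with the pointwise control $|C_t(x,y)|\les\min(|t|^{-1},|x-y|^{-1})$ from (\ref{1stcosbd}) and (\ref{funda}) together with a Duhamel expansion. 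As written, your second paragraph is a sketch that stops short of this computation.
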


\begin{proof}[Proof of Lemma \ref{lplemma}] Starting from the estimate
$$
\|C_t f\|_{L^\infty} \les |t|^{-1} 
$$
and using the fact that by Lemma \ref{aux} $\|\tilde P_n H^{i\sigma}\|_{\B(L^p)}$ is bounded independently of $n$ and grows polynomially with $\sigma$, we obtain
$$
\|\cos(t \sqrt H) H^{-1-i\sigma} \tilde P_n f\|_{L^\infty} \les |t|^{-1} \langle \sigma \rangle^{\frac 3 2 +} \|f\|_{L^1}.
$$
Interpolating with the $L^2$ bound, we obtain that for $1 \leq p \leq 2$ and $s=\frac 2 p-1$, independently of $n$
$$
\|\cos(t \sqrt H) H^{-s} P_n f\|_{L^{p'}} \les |t|^{-s} \|f\|_{L^p}.
$$
Following an argument from \cite{sog}, let $f, g \in L^p$ and write
$$\begin{aligned}
\langle \cos(t \sqrt H) H^{-s} f, g \rangle &= \sum_{n_1, n_2 \in \Z} \langle \cos(t \sqrt H) H^{-s} \tilde P_{n_1} f, \tilde P_{n_2} g \rangle \\
&= \sum_{k=-1}^1 \sum_{n \in \Z} \langle \cos(t \sqrt H) H^{-s} \tilde P_n f, \tilde P_{n+k} g \rangle
\end{aligned}$$
because of the quasi-orthogonality of $\tilde P_n$. Thus all non-zero terms are on three diagonals, namely the main diagonal and the ones above and below. Regarding the main diagonal, for $1<p \leq 2$
$$\begin{aligned}
\bigg|\sum_{n \in \Z} \langle \cos(t \sqrt H) H^{-s} P_n f, P_n g \rangle\bigg| &\les |t|^{-s} \sum_{n \in \Z} \|\tilde P_n f\|_{L^p} \|\tilde P_n g\|_{L^p} \leq |t|^{-s} \|\tilde P_n f\|_{\ell^2_n L^p} \|\tilde P_n g\|_{\ell^2_n L^p} \\
&\leq |t|^{-s} \|\tilde P_n f\|_{L^p \ell^2_n} \|\tilde P_n g\|_{L^p \ell^2_n} = |t|^{-s} \|S_H f\|_{L^p} \|S_H g\|_{L^p} \\
&\les |t|^{-s} \|f\|_{L^p} \|g\|_{L^p},
\end{aligned}$$
where we used Minkowski's inequality to interchange the order of the norms, for $p \leq 2$, and Proposition \ref{sqr} regarding the square function.

The other two diagonals are treated in the same way. This proves the desired inequality.
\end{proof}

\section*{Acknowledgments}
M.B.\;has been supported by the NSF grant DMS--1700293 and by the Simons Collaboration Grant No.\;429698.

M.G.\;is supported by the Simons Collaboration Grant No.\;635369.

M.B.~would like to thank Gang Zhou for the useful conversations on this topic. We would also like to thank the anonymous referee for another paper, who brought Peral's estimates to our attention.

\end{document}